\theoremstyle{plain} % default
\newtheorem{theorem}{Theorem} % [section]
\newtheorem{lemma}[theorem]{Lemma}
\theoremstyle{definition}
\newtheorem*{assumption (H1)}{Assumption (H1)}
\newtheorem*{assumption (H2)}{Assumption (H2)}
\theoremstyle{remark}
\newtheorem{remark}[theorem]{Remark}
\numberwithin{equation}{section}
\numberwithin{theorem}{section}
\newcommand\NoBlackBoxes{\global\overfullrule0pt}
\theoremstyle{plain} % default
\def\4{\kern1pt}
\def\6{\vphantom0}
\def\8{\kern-10pt}
\def\7#1{_{(#1)}}
\let\serieslogo@\relax
\let\@setcopyright\relax
\def\speciallabelmark#1{\def\@currentlabel{#1}}
\begin{document}
%{\Huge Draft}

\def\ffrac#1#2{\raise.5pt\hbox{\small$\4\displaystyle\frac{\,#1\,}{\,#2\,}\4$}}
\def\ovln#1{\,{\overline{\!#1}}}
\def\ve{\varepsilon}
\def\kar{\beta_r}

\title{STABILITY OF CRAMER'S CHARACTERIZATION \\
OF NORMAL LAWS IN INFORMATION DISTANCES}

\author{S. G. Bobkov$^{1,4,4'}$}
\thanks{1) School of Mathematics, University of Minnesota, USA;
Email: bobkov@math.umn.edu}
\address
{Sergey G. Bobkov \newline
School of Mathematics, University of Minnesota  \newline 
127 Vincent Hall, 206 Church St. S.E., Minneapolis, MN 55455 USA
\smallskip}
\email {bobkov@math.umn.edu} 

\author{G. P. Chistyakov$^{2,4}$}
\thanks{2) Faculty of Mathematics, University of Bielefeld, Germany;
Email: chistyak@math.uni-bielefeld.de}
\address
{Gennadiy P. Chistyakov\newline
Fakult\"at f\"ur Mathematik, Universit\"at Bielefeld\newline
Postfach 100131, 33501 Bielefeld, Germany}
\email {chistyak@math.uni-bielefeld.de}

\author{F. G\"otze$^{3,4}$}
\thanks{3) Faculty of Mathematics, University of Bielefeld, Germany;
Email: goetze@math.uni-bielefeld.de}
\thanks{4) Research partially supported by SFB 701}
\thanks{4') Research partially supported by the Humboldt Foundation}
\address
{Friedrich G\"otze\newline
Fakult\"at f\"ur Mathematik, Universit\"at Bielefeld\newline
Postfach 100131, 33501 Bielefeld, Germany}
\email {goetze@mathematik.uni-bielefeld.de}

\subjclass
{Primary 60E} 
\keywords{Cramer's theorem, characterization
of normal laws, stability problems} 

\begin{abstract}
Optimal stability estimates in the class of regularized distributions
are derived for the characterization of normal laws in Cramer's theorem 
with respect to relative entropy and Fisher information distance.
\end{abstract}

\maketitle
\markboth{S. G. Bobkov, G. P. Chistyakov and F. G\"otze}{Entropic 
stability in Cramer's theorem}

\def\theequation{\thesection.\arabic{equation}}
\def\E{{\bf E}}
\def\R{{\mathbb R}}
\def\C{{\bf C}}
\def\P{{\bf P}}
\def\H{{\rm H}}
\def\Im{{\rm Im}}
\def\Tr{{\rm Tr}}

\def\k{{\kappa}}
\def\M{{\cal M}}
\def\Var{{\rm Var}}
\def\Ent{{\rm Ent}}
\def\O{{\rm Osc}_\mu}

\def\ep{\varepsilon}
\def\phi{\varphi}
\def\F{{\cal F}}
\def\L{{\cal L}}

\def\be{\begin{equation}}
\def\en{\end{equation}}
\def\bee{\begin{eqnarray*}}
\def\ene{\end{eqnarray*}}

\section{{\bf Introduction}}
\vskip2mm
If the sum of two independent random variables has a nearly normal 
distribution, then both summands have to be nearly normal. This
property is called stability, and it depends on distances used
to measure ``nearness".
Quantitative forms of this important theorem by P. L\'evy are intensively 
studied in the literature, and we refer to [B-C-G3] for historical 
discussions and references. Most of the results in this direction
describe stability of Cramer's characterization of the normal laws 
for distances which are closely connected to weak convergence.
On the other hand, there is no stability for strong distances including 
the total variation and the relative entropy, even in the case where 
the summands are equally distributed.
(Thus, the answer to a conjecture from the 1960's by McKean [MC] is 
negative, cf. [B-C-G1-2].) Nevertheless, the stability with respect 
to the relative entropy can be established for {\it regularized} 
distributions in the model, where a small independent Gaussian noise 
is added to the summands. Partial results of this kind have been 
obtained in [B-C-G3], and in this note we introduce and develop 
new technical tools in order to reach optimal lower bounds 
for closeness to the class of the normal laws in the sense of 
relative entropy. Similar bounds are also obtained for the Fisher 
information distance.

First let us recall basic definitions and notations.
If a random variable $X$ with finite second moment has a density $p$, 
the entropic distance from the distribution $F$ of $X$ to the normal is 
defined to be
$$
D(X) = h(Z) - h(X) = \int_{-\infty}^\infty 
p(x) \log \frac{p(x)}{\varphi_{a,b}(x)}\,dx,
$$
where 
$$
\varphi_{a,b}(x) = \frac{1}{b\sqrt{2\pi}}\,e^{-(x-a)^2/2b^2},
\quad x\in \R,
$$ 
denotes the density of a Gaussian random variable $Z \sim N(a,b^2)$
with the same mean $a = \E X = \E Z$ and variance 
$b^2 = \Var(X) = \Var(Z)$ as for $X$ ($a \in \R$, $b>0$). Here
$$
h(X) = -\int_{-\infty}^\infty p(x) \log p(x)\,dx
$$
is the classical Shannon entropy, which is well-defined and 
is bounded from above by the entropy of $Z$, so that $D(X) \geq 0$.
The quantity $D(X)$ represents the Kullback-Leibler distance from $F$ 
to the family of all normal laws on the line; it is affine invariant, 
and so it does not depend on the mean and variance of $X$.

One of the fundamental properties of the functional $h$ is the entropy 
power inequality 
$$
N(X+Y) \geq N(X) + N(Y),
$$ 
which holds for independent random variables $X$ and $Y$, where 
$N(X)=e^{2h(X)}$ denotes the entropy power (cf. e.g. [D-C-T], [J]). 
In particular, if $\Var(X+Y)=1$, it yields an upper bound 
\begin{equation}\label{l1'}
D(X+Y)\le \Var(X)D(X)+\Var(Y)D(Y),
\end{equation}
which thus quantifies the closeness to the normal distribution for the sum in terms
of closeness to the normal distribution of the summands. The generalized Kac 
problem addresses (1.1) in the opposite direction: How can one bound 
the entropic distance $D(X+Y)$ from below in terms of $D(X)$ and $D(Y)$ 
for sufficiently smooth distributions? To this aim, for a small 
parameter $\sigma > 0$, we consider regularized random variables
$$
X_\sigma = X + \sigma Z, \qquad Y_\sigma = Y + \sigma Z',
$$
where $Z$, $Z'$ are independent standard normal random variables, 
independent of $X,Y$. The distributions of $X_{\sigma}$ and $Y_{\sigma}$ 
will be called {\it regularized} as well. 
Note that the additive white Gaussian noise is a basic statistical model used 
in information theory to mimic the effect of random processes that occur 
in nature. In particular, the class of regularized distributions contains 
a wide class of probability measures on the line 
which have important applications in statistical theory.

As a main goal, we prove the following reverse of the upper bound (\ref{l1'}).

\vskip5mm
\begin{theorem}
Let $X,Y$ be independent random variables with $\Var(X+Y) = 1$. Given 
$0 < \sigma \leq 1$, the regularized random variables $X_\sigma$ and 
$Y_{\sigma}$ satisfy
\begin{equation}\label{l1}
D(X_\sigma + Y_\sigma) \ge c_1(\sigma)
\left(e^{-c_2(\sigma)/D(X_\sigma)} + e^{-c_2(\sigma)/D(Y_\sigma)}\right),
\end{equation}
where $c_1(\sigma) = \exp\{c\sigma^{-6}\log \sigma\}$,
$c_2(\sigma) = c\sigma^{-6}$ with an absolute constant $c>0$. 
\end{theorem}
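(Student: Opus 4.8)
\medskip
\noindent\textbf{Proof strategy.}
Write $D=D(X_\sigma+Y_\sigma)$, $D_1=D(X_\sigma)$ and $D_2=D(Y_\sigma)$, and (using translation invariance of $D$) assume all variables centered. By symmetry it is enough to prove the one-term bound $D\ge c_1(\sigma)\,e^{-c_2(\sigma)/D_1}$ and then add the analogous inequality with $D_2$ in place of $D_1$, at the cost of halving $c_1(\sigma)$. Moreover one may assume $D$ to be as small as desired, since otherwise \eqref{l1} holds after shrinking $c_1(\sigma)$; by \eqref{l1'} this forces $D_1$ to be small as well. So the real task is the quantitative implication: if $D$ is small, then $D_1\le c_2(\sigma)/\log\!\big(c_1(\sigma)/D\big)$.

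The first step is to pass from relative entropy to characteristic functions. Let $v_1,v_2$ be the characteristic functions of $X,Y$; then $X_\sigma+Y_\sigma$ has characteristic function $w(t)=v_1(t)v_2(t)\,e^{-\sigma^2t^2}$ and variance $1+2\sigma^2$. By Pinsker's inequality the density of $X_\sigma+Y_\sigma$ is within $\sqrt{2D}$ in $L^1$ of the normal density with the same mean and variance, whence $|w(t)-e^{-(1+2\sigma^2)t^2/2}|\le\sqrt{2D}$ for all $t$. Since $|v_1|,|v_2|\le1$ we also have $|w(t)|\le e^{-\sigma^2t^2}$, so on the interval $|t|\le T$ with $T$ of order $\sqrt{\log(1/D)}$ the Gaussian term dominates the error and one obtains $v_1(t)v_2(t)=e^{-t^2/2}\,(1+\theta(t))$ with $|\theta(t)|\le\frac12$ there. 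Using $|v_2|\le1$ once more gives the one-sided estimate $|v_1(t)|\ge\frac12\,e^{-t^2/2}$ on $[-T,T]$ (and the same for $v_2$); equivalently, the characteristic function $\widetilde v_1(t)=v_1(t)e^{-\sigma^2t^2/2}$ of $X_\sigma$ satisfies $|\widetilde v_1(t)|\ge\frac12\,e^{-(1+\sigma^2)t^2/2}$ on $[-T,T]$, while $|\widetilde v_1(t)|\le e^{-\sigma^2t^2/2}$ for every $t$.

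The core of the argument is the converse, quantitative Cram\'er-type statement: if the characteristic function of a regularized variable $X_\sigma$ stays above $\frac12\,e^{-(1+\sigma^2)t^2/2}$ throughout a long interval $[-T,T]$, then $D(X_\sigma)\le C(\sigma)/T^2$ provided $T$ is large enough in terms of $\sigma$. Here one exploits that the density $p_\sigma$ of $X_\sigma$ is the convolution of the law of $X$ with the Gaussian kernel of variance $\sigma^2$, hence extends to an entire function of order $2$ whose growth in the complex plane is controlled by a power of $1/\sigma$; the lower bound on $|\widetilde v_1|$ on $[-T,T]$ prevents the ratio $p_\sigma/\varphi_{a_1,b_1}$ (with $a_1=\E X_\sigma$, $b_1^2=\Var(X_\sigma)$, noting $b_1^2\le1+\sigma^2$ since $\Var(X)\le1$) from oscillating away from $1$, and a Phragm\'en--Lindel\"of argument combined with a moment/cumulant estimate turns this into a uniform bound $\sup_x|p_\sigma(x)/\varphi_{a_1,b_1}(x)-1|=O\!\big(C(\sigma)/T^2\big)$ and thence into the desired estimate for $D(X_\sigma)$ (alternatively one may route this step through the Fisher information distance, integrating along the heat flow via de Bruijn's identity). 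I expect this to be the main obstacle: it requires sharp estimates for entire characteristic functions and a careful tracking of all constants in $\sigma$, and it is precisely this tracking --- several intermediate bounds each costing a fixed negative power of $\sigma$, which then compound --- that produces $C(\sigma)\asymp\sigma^{-6}$.

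Finally, assembling the pieces: with $T^2\asymp\log(1/D)$ from the second step and $D_1\le C(\sigma)/T^2$ from the third we get $D_1\le C(\sigma)/\log(1/D)$ up to absolute factors, i.e.\ $D\ge\exp\{-C(\sigma)/D_1\}$. Collecting the $\sigma$-dependent multiplicative losses incurred along the way into the prefactor converts this into $D\ge c_1(\sigma)\,e^{-c_2(\sigma)/D_1}$ with $c_1(\sigma)=\exp\{c\sigma^{-6}\log\sigma\}$ and $c_2(\sigma)=c\sigma^{-6}$, and adding the analogous bound for $D_2$ (and halving $c_1(\sigma)$) yields \eqref{l1}.
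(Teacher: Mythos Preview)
Your high-level outline---reduce to small $D$, pass to characteristic functions via Pinsker, show each factor is nearly Gaussian on a window of width $T\asymp\sqrt{\log(1/D)}$, and convert this into $D(X_\sigma)\le C(\sigma)/T^2$---matches the paper's architecture. But the execution of the two central steps has real gaps.

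First, the separation of factors. From $|v_1(t)v_2(t)-e^{-t^2/2}|$ small on the real line and $|v_2|\le 1$ you only extract a \emph{lower bound} $|v_1(t)|\ge\tfrac12 e^{-t^2/2}$ on $[-T,T]$. A modulus lower bound on a real interval does not by itself force $v_1$ to be close to a Gaussian characteristic function; you still need to control the argument and the higher cumulants. The paper does not proceed this way. It first truncates $X,Y$ at level $N\asymp\sqrt{\log(1/\varepsilon)}$ to obtain $X^*,Y^*$ whose characteristic functions $f_{X^*},f_{Y^*}$ are \emph{entire}, and then shows (Lemma~5.1) that the product $f_{X^*}(t)f_{Y^*}(t)$ is trapped between $\tfrac12|e^{-t^2/2}|$ and $\tfrac32|e^{-t^2/2}|$ throughout the complex disk $|t|\le T$. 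Only with this two-sided bound in the disk can one invoke the Linnik--Ostrovski\u{\i} machinery (the quantitative core of Cram\'er's theorem for entire characteristic functions) to obtain the representation $f_{X^*}(t)=\exp\{ia_1t-\tfrac12\sigma_1^2t^2-\tfrac12 t^2\psi_{X^*}(t)\}$ with $\psi_{X^*}(t)=\sum_{k\ge3}i^kc_k(t/T)^{k-2}$ and uniformly bounded $|c_k|$. This representation---not a mere lower bound on $|v_1|$---is what drives everything downstream. Your claim that the density $p_\sigma$ ``extends to an entire function of order~2'' is true but beside the point: the object that must be analytically continued is the characteristic function, and for that one needs compact support, hence the truncation. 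The comparison of $D(X_\sigma)$ with $D(X_\sigma^*)$ (and the control of moments of the truncated variables) occupies Sections~2--3 and is not automatic.

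Second, the conversion to an entropy bound. The paper does not use a Phragm\'en--Lindel\"of argument on $p_\sigma/\varphi$. Instead it runs a saddle-point analysis (Sections~6--9): write $p_{X^*_\sigma}(x)$ via the inverse Fourier transform, shift the contour to the line ${\rm Im}\,t=y(x)$ where $y(x)$ solves $\frac{d}{dy}\log f_{X^*}(iy)+\sigma^2 y=-x$, and expand the resulting integral using the series for $\psi_{X^*}$. This yields the pointwise expansion
\[
\log\frac{p_{X^*_\sigma}(x)}{\varphi_{\sqrt{1/|b_1|}}(x)}=\frac{c_3}{2T}\big((b_1x)^3+3b_1y(x)\big)+O\big(T^{-2}\big)
\]
on $|x|\lesssim T/|b_1|$ (Lemma~9.1), and after integrating against $p_{X^*_\sigma}$ the leading $T^{-1}$ term cancels, leaving $D(X^*_\sigma)\le c|b_1|^3T^{-2}$ with $|b_1|=(\sigma_1^2+\sigma^2)^{-1}$. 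It is precisely this cube $|b_1|^3\asymp\sigma^{-6}$ that produces the $\sigma^{-6}$ in $c_2(\sigma)$; your remark that ``several intermediate bounds each costing a fixed negative power of $\sigma$ compound'' misidentifies the mechanism. Without the explicit cumulant expansion in the disk and the saddle-point computation, there is no visible route from your real-line modulus bound to the sharp $C(\sigma)/T^2$ estimate.
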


\vskip2mm
Thus, when $D(X_\sigma + Y_\sigma)$ is small, the entropic distances 
$D(X_\sigma)$ and $D(Y_\sigma)$ have to be small, as well. 
In particular, if $X+Y$ is normal, then both $X$ and $Y$ are normal,
so we recover Cramer's theorem. Moreover, the dependence with respect 
to the couple $(D(X_{\sigma}),D(Y_{\sigma}))$ on the right-hand side 
of (1.2) can be shown to be essentially optimal, as stated in 
Theorem 1.3 below. 

Theorem~1.1 remains valid even in extremal cases where $D(X)=D(Y)=\infty$
(for example, when both $X$ and $Y$ have discrete distributions). 
However, the value of $D(X_{\sigma})$
for the regularized variables $X_{\sigma}$ cannot be arbitrary.
However, for the regularized distributions the value of $D(X_{\sigma})$
cannot be arbitrary. Indeed, $X_\sigma$ has always a bounded density
$
p_\sigma(x) = \frac{1}{\sigma\sqrt{2\pi}}\ \E\,e^{-(x-X)^2/2\sigma^2}
\leq \frac{1}{\sigma\sqrt{2\pi}},
$
so that $h(X_\sigma) \geq -\log\frac{1}{\sigma\sqrt{2\pi}}$. This implies
an upper bound
$$
D(X_{\sigma}) \leq \frac{1}{2}\,\log\frac{e\Var(X_\sigma)}{\sigma^2} 
\leq \frac{1}{2}\,\log\frac{2e}{\sigma^2},
$$
describing a general possible degradation of the relative entropy for decreasing 
$\sigma$. If $D_\sigma \equiv D(X_{\sigma}+Y_{\sigma})$ is known to be 
sufficiently small, say, when $D_\sigma \leq c_1^2(\sigma)$, the inequality 
(\ref{l1}) provides an additional constraint in terms of $D_\sigma$, namely,
$$
D(X_{\sigma}) \leq \frac{c}{\sigma^6 \log(1/D_\sigma)}.
$$

Let us also note that one may reformulate (1.2) as an upper bound for 
the entropy power $N(X_\sigma + Y_\sigma)$ in terms of $N(X_\sigma)$ 
and $N(Y_\sigma)$. 
Such relations, especially those of the linear form
\begin{equation}\label{l2}
N(X+Y) \leq C\,(N(X) + N(Y)),
\end{equation}
are intensively studied in the literature for various classes of probability 
distributions under the name ``reverse entropy power inequalities", cf. e.g. 
\cite{C-Z}, \cite{B-M1}, \cite{B-M2}, \cite{B-N-T}.
However, (\ref{l2}) cannot be used as a quantitative version 
of Cram\'er's theorem, since it looses information about $D(X+Y)$, when
$D(X)$ and $D(Y)$ approach zero.

A result similar to Theorem 1.1 also holds for the Fisher information distance, 
which may be more naturally written in the standardized form
\begin{equation}\label{l3}
J_{st}(X) = b^2(I(X)-I(Z)) = b^2
\int_{-\infty}^\infty\Big(\frac{p'(x)}{p(x)}-\frac{\varphi'_{a,b}(x)}
{\varphi_{a,b}(x)}\Big)^2\,p(x)\,dx
\end{equation}
with parameters $a$ and $b$ as before. Here 
\begin{equation}\notag
I(X)=\int_{-\infty}^\infty \frac{p'(x)^2}{p(x)}\,dx,
\end{equation}
denotes the Fisher information of $X$, assuming that the density $p$ of 
$X$ is (locally) absolutely continuous and has a derivative $p'$ in the
sense of Radon-Nikodym. Similarly to $D$, the standardized Fisher 
information distance is an affine invariant functional, so that
$J_{st}(\alpha + \beta X) = J_{st}(X)$ for all $\alpha,\beta \in \R$, 
$\beta \ne 0$. In many applications it is used as a strong mesure of $X$ 
being non Gaussian. For example, $J_{st}(X)$ dominates 
the relative entropy; more precisely, we have
\begin{equation}\label{l4}
\frac{1}{2}\, J_{st}(X)\ge D(X).
\end{equation}
This relation may be regarded as an information theoretic variant 
of the logarithmic Sobolev inequality for the Gaussian measure due to 
Gross. Indeed it may be derived from an isoperimetric inequality 
for entropies due to Stam (cf. [S], [C], [B-G-R-S]). Moreover, 
in [S] Stam established an analog for the entropy power inequality, 
$\frac{1}{I(X+Y)} \geq \frac{1}{I(X)} + \frac{1}{I(Y)}$,
which implies the following counterpart of the inequality (\ref{l1'})
\begin{equation}
J_{st}(X+Y)\le\Var(X)J_{st}(X)+\Var(Y)J_{st}(Y),  
\label{l5}
\end{equation}
for any independent random variables $X$ and $Y$ with $\Var(X+Y)=1$.
We will show that this upper bound can be reversed in a full analogy 
with (\ref{l1}).

\vskip5mm
\begin{theorem}
Under the assumptions of Theorem~$1.1$,
\begin{equation}
J_{st}(X_{\sigma}+Y_{\sigma})
 \ge 
c_3(\sigma) \left(e^{-c_4(\sigma)/J_{st}(X_{\sigma})} +
e^{-c_4(\sigma)/J_{st}(Y_{\sigma})}\right),
\label{l6}
\end{equation}
where 
$c_3(\sigma) = \exp\{c\sigma^{-6}(\log\sigma)^3\}$,
$c_4(\sigma) = c\sigma^{-6}$ with an absolute constant $c>0$.
\end{theorem}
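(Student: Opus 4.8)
The plan is to deduce the Fisher-information statement \eqref{l6} from the entropic statement \eqref{l1} of Theorem~1.1, using \eqref{l4} as the bridge in one direction and a reverse comparison (valid for regularized densities) in the other. First I would apply Theorem~1.1 to the pair $(X_\sigma, Y_\sigma)$ — or more precisely, reuse its internal mechanism — to get a lower bound for $D(X_\sigma+Y_\sigma)$ in terms of $D(X_\sigma)$ and $D(Y_\sigma)$. By \eqref{l4}, $J_{st}(X_\sigma+Y_\sigma) \ge 2\,D(X_\sigma+Y_\sigma)$, so it suffices to bound $D(X_\sigma+Y_\sigma)$ from below by $c_3(\sigma)$ times the exponential expression in \eqref{l6}. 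Since \eqref{l1} already furnishes such a bound in terms of $D(X_\sigma),D(Y_\sigma)$, the remaining task is to control $1/D(X_\sigma)$ from above by a multiple of $1/J_{st}(X_\sigma)$, i.e. to show a \emph{reverse} logarithmic-Sobolev-type inequality $J_{st}(X_\sigma) \le K(\sigma)\, D(X_\sigma)$ with $K(\sigma)$ of the form $\exp\{c\sigma^{-6}(\log\sigma)^2\}$ or so; feeding this into $e^{-c_2(\sigma)/D(X_\sigma)} \ge e^{-c_2(\sigma) K(\sigma)/J_{st}(X_\sigma)}$ and renaming constants yields \eqref{l6} with $c_4(\sigma)=c\sigma^{-6}$ and $c_3(\sigma)$ absorbing the extra logarithmic powers.

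The key step is therefore the reverse comparison between Fisher information and entropy for the regularized variable $X_\sigma = X+\sigma Z$. This is where the Gaussian convolution is essential: $p_\sigma$ is smooth, strictly positive, and its score function $\rho_\sigma = p_\sigma'/p_\sigma$ is controlled through the heat-semigroup representation $p_\sigma = P_{\sigma^2/2}\mu$ (or directly via $p_\sigma(x) = \frac{1}{\sigma\sqrt{2\pi}}\,\E\,e^{-(x-X)^2/2\sigma^2}$). I would write $\rho_\sigma(x) = \sigma^{-2}(\E[X\mid X_\sigma=x]-x)$ and obtain pointwise and integrated bounds on $\rho_\sigma$ and its oscillation in terms of $\sigma$ and the second moment of $X$ (which is bounded by $\Var(X+Y)=1$ plus lower-order terms). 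Combining such bounds with the known relation $\frac12 J_{st} \ge D$ in the form of a two-sided estimate — for instance via de Bruijn's identity $\frac{d}{dt} h(X+\sqrt{t}\,Z) = \tfrac12 I(X+\sqrt t Z)$ and monotonicity of Fisher information along the heat flow — should give $J_{st}(X_\sigma) \le K(\sigma) D(X_\sigma)$ with the stated type of dependence on $\sigma$.

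The main obstacle I anticipate is obtaining the correct power of $\sigma$ (namely $\sigma^{-6}$) and the correct logarithmic factor in $c_3(\sigma)$, rather than merely \emph{some} polynomial rate. A naive use of crude bounds on the score function would lose powers of $\sigma$; one has to track the dependence carefully, presumably matching the bookkeeping already done in the proof of Theorem~1.1 so that the exponents $\sigma^{-6}$ coincide. A secondary technical point is handling the extremal cases $J_{st}(X_\sigma)=\infty$ or $D(X_\sigma)=\infty$ (for $\sigma>0$ both are automatically finite once $X$ has finite second moment, so this should be routine, but it must be checked). If the direct reduction proves too lossy, the fallback is to rerun the argument of Theorem~1.1 from the start with $D$ replaced by $J_{st}$ throughout — using that $J_{st}$ also satisfies a sub-additivity bound \eqref{l5} and that the same characteristic-function / complex-analytic estimates underlying Theorem~1.1 apply verbatim — at the cost of the slightly worse constant $c_3(\sigma)=\exp\{c\sigma^{-6}(\log\sigma)^3\}$ reflected in the statement.
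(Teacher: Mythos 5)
Your primary route breaks down at exactly the step you label the ``key step''. To convert the bound of Theorem~1.1 into \eqref{l6} you must pass from $e^{-c_2(\sigma)/D(X_\sigma)}$ to (a constant times) $e^{-c_4(\sigma)/J_{st}(X_\sigma)}$ with $c_4(\sigma)=c\sigma^{-6}$, which forces $c_2(\sigma)/D(X_\sigma)\le c_4(\sigma)/J_{st}(X_\sigma)+O(1)$, i.e.\ a reverse Stam inequality $J_{st}(X_\sigma)\le C\,D(X_\sigma)$ with an \emph{absolute} constant $C$. A $\sigma$-dependent factor $K(\sigma)$ cannot be ``absorbed into $c_3(\sigma)$'': writing $e^{-c_2K/J}=e^{-c_2/J}\,e^{-c_2(K-1)/J}$, the second factor depends on $J=J_{st}(X_\sigma)$ and is not bounded below by any function of $\sigma$ alone. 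Worse, no such reverse inequality holds, even for regularized densities with $\sigma$ fixed: a perturbation $p_\sigma(x)=\varphi_b(x)\bigl(1+\eta\cos(\omega x)+\dots\bigr)$ can be realized as a $\sigma$-regularization whenever $\eta\lesssim e^{-\sigma^2\omega^2/2}$, and for it $D\asymp\eta^2$ while $J_{st}\asymp\eta^2\omega^2$, so the ratio $J_{st}(X_\sigma)/D(X_\sigma)$ is unbounded. Gaussian smoothing does give the absolute bounds $I(X_\sigma)\le\sigma^{-2}$ and $J_{st}(X_\sigma)\le C\sigma^{-2}$ via the score representation you mention, but it does not make $J_{st}$ and $D$ comparable.

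Your one-sentence fallback is in fact the paper's actual proof, and it is where all the new work sits. From $J_{st}(X_\sigma+Y_\sigma)\le\varepsilon$ one gets $D(X_\sigma+Y_\sigma)\le\varepsilon/2$ by \eqref{l4}, so the truncation, the characteristic-function bounds, and the saddle-point representation of the density of $X^*_\sigma$ from the proof of Theorem~1.1 all remain available. But the quantity to be bounded from above is now $J_{st}(X^*_\sigma)$, written as $-(\sigma_1^2+\sigma^2)\int p''_{X^*_\sigma}\log\bigl(p_{X^*_\sigma}/\varphi_{1/\sqrt{|b_1|}}\bigr)\,dx$, which requires control of the \emph{second} derivative $p''_{X^*_\sigma}$: this is the content of the additional integral estimates for $I_1(x,y)$ and $I_2(x,y)$ (not needed for Theorem~1.1), and of a separate comparison lemma between $J_{st}(X_\sigma)$ and $J_{st}(X^*_\sigma)$ whose proof again runs through $p''$. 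None of this is sketched in your proposal, and the extra $(\log\sigma)^3$ in $c_3(\sigma)$ is not a cosmetic ``slightly worse constant'' but reflects the stronger a priori constraint \eqref{8.0} on $\sigma$ versus $\varepsilon$ under which these second-derivative estimates close. So the fallback is the right strategy, but as written it is a plan rather than a proof.
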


\vskip2mm
Let us also describe in which sense the lower bounds (\ref{l1}) and (\ref{l6})
may be viewed as optimal.

\vskip5mm
\begin{theorem}
For every $T \ge 1$, there exist independent identically distributed 
random variables $X=X_T$ and $Y=Y_T$ with mean zero and variance one, 
such that 
$J_{st}(X_{\sigma})\to 0$ as $T\to\infty$ for $0<\sigma\le 1$ and 
\begin{align}
D(X_{\sigma}-Y_{\sigma})
 &\le
e^{-c(\sigma)/D(X_{\sigma})} + e^{-c(\sigma)/D(Y_{\sigma})},\notag\\
J_{st}(X_{\sigma}-Y_{\sigma})
 &\le
e^{-c(\sigma)/J_{st}(X_{\sigma})} + e^{-c(\sigma)/J_{st}(Y_{\sigma})}\notag
\end{align}
with some $c(\sigma)>0$ depending on $\sigma$ only.
\end{theorem}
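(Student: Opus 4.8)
The plan is to construct the distribution of $X_T$ explicitly as a small perturbation of the standard normal whose density has a highly oscillatory component localized at scale $\sim 1/T$, so that convolving two independent copies (in the difference $X_\sigma - Y_\sigma$) produces a near-cancellation that is exponentially stronger than the individual defects. Concretely, I would take $X_T$ with density $p_T(x) = \varphi(x)\,(1 + \ve_T\, g(Tx))$ where $\varphi$ is the standard Gaussian density, $g$ is a fixed bounded mean-zero smooth function (for instance a trigonometric polynomial such as $g(t)=\cos t$, suitably cut off to keep $p_T\ge 0$ and to preserve mean zero and variance one after a negligible affine renormalization), and $\ve_T$ is a small amplitude to be chosen. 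The key idea is that the characteristic function $f_T(t) = \E e^{itX_T}$ then equals $e^{-t^2/2}$ plus a correction supported essentially near $t = \pm T$, of size $\ve_T$; after adding the Gaussian noise $\sigma Z$, the characteristic function of $X_\sigma$ is $f_T(t)\,e^{-\sigma^2 t^2/2}$, so the correction near $|t|\sim T$ is damped by a factor $e^{-\sigma^2 T^2/2}$.

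The execution proceeds in three steps. \emph{Step 1 (defect of the summand).} Estimate $J_{st}(X_\sigma)$ and $D(X_\sigma)$ from above: because $X_\sigma$ has a smooth density close to $\varphi_{0,\sigma'}$ with $\sigma'^2 = 1+\sigma^2$, one gets $D(X_\sigma) \asymp J_{st}(X_\sigma) \asymp \ve_T^2\, e^{-\sigma^2 T^2}$ up to polynomial-in-$T$ factors, using the damping of the oscillatory part and the fact that both functionals are quadratic in the perturbation to leading order. In particular $J_{st}(X_\sigma)\to 0$ as $T\to\infty$, and $\log(1/D(X_\sigma)) \asymp \sigma^2 T^2$. \emph{Step 2 (defect of the difference).} For $W_\sigma := X_\sigma - Y_\sigma$, the characteristic function is $|f_T(t)|^2 e^{-\sigma^2 t^2}$; the self-convolution of the correction now lives near $|t|\sim 2T$ \emph{and} near $t\approx 0$, but the near-zero piece is the square of a mean-zero oscillation and hence produces only a doubly-damped, higher-order term. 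The upshot is $D(W_\sigma), J_{st}(W_\sigma) \lesssim \ve_T^4\, e^{-2\sigma^2 T^2}$ up to polynomial factors — i.e. roughly the \emph{square} of the summand defect. \emph{Step 3 (matching the exponential form).} Combining Steps 1–2, $D(W_\sigma) \lesssim (D(X_\sigma))^2 / (\text{poly}) \le e^{-c(\sigma)/D(X_\sigma)}$ once $D(X_\sigma)$ is small enough; more precisely, since $1/D(X_\sigma)\gtrsim \ve_T^{-2} e^{\sigma^2T^2}$ grows and $D(W_\sigma)\lesssim e^{-2\sigma^2T^2}\le e^{-c(\sigma)\,\ve_T^{-2}e^{\sigma^2 T^2}\cdot e^{-\sigma^2T^2}}$ — one chooses $\ve_T$ (say a small power of $T$, or even a constant) so that the exponent $c(\sigma)/D(X_\sigma)$ is of order $\sigma^2 T^2$ while $D(W_\sigma)$ decays like $e^{-2\sigma^2T^2}$, giving the claimed bound with room to spare; the same computation with $J_{st}$ in place of $D$ handles the second inequality, using $(1/2)J_{st}\ge D$ and the comparability of the two functionals for these smooth densities.

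The main obstacle is Step 2: one must show that self-convolving the density (equivalently, squaring the modulus of the characteristic function) genuinely gains a \emph{second} power of the small parameter and does not merely reproduce a near-zero low-frequency term of the same order $\ve_T^2$. This requires exploiting the mean-zero (in fact higher-vanishing-moment) structure of $g$ carefully, so that the low-frequency part of $(\widehat{p_T-\varphi})^{*2}$ vanishes to sufficiently high order at the origin and is therefore suppressed by additional factors of $1/T$ after the Gaussian smoothing — this is precisely where the choice of $g$ (a pure high oscillation with several vanishing moments, rather than a generic bump) is essential. A secondary technical point is controlling $D$ and $J_{st}$ by the $L^2$-type norms of the density perturbation and its derivative uniformly in the relevant range; this is standard for densities bounded below by a Gaussian on compacts and with Gaussian-type tails, but the bookkeeping of the polynomial-in-$T$ prefactors must be done with enough care that they are absorbed into the exponential, which is why the statement only claims the form $e^{-c(\sigma)/D(X_\sigma)}$ with an unspecified $c(\sigma)$ rather than a sharp constant.
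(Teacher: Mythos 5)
There is a fatal gap in Step 3, and it traces back to the choice of perturbation in Step 1. With a density $p_T(x)=\varphi(x)(1+\ve_T g(Tx))$ whose Fourier correction is localized near $|t|\sim T$, your own Step 1 gives $D(X_\sigma)\asymp \ve_T^2 e^{-\sigma^2T^2}$, i.e.\ the \emph{individual} defect is already exponentially small in $T^2$. Consequently the target quantity $e^{-c(\sigma)/D(X_\sigma)}\asymp \exp\{-c(\sigma)\,\ve_T^{-2}e^{\sigma^2T^2}\}$ is \emph{doubly} exponentially small in $T^2$, whereas your Step 2 only gives $D(X_\sigma-Y_\sigma)\lesssim \ve_T^4 e^{-2\sigma^2T^2}$, which is singly exponentially small. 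The claimed implication ``$D(W_\sigma)\lesssim D(X_\sigma)^2 \le e^{-c(\sigma)/D(X_\sigma)}$'' is false for small $D$: $e^{-c/D}$ decays faster than every power of $D$, so a quadratic (or any polynomial) gain under convolution can never reach it. Your attempted repair --- choosing $\ve_T$ so that $c(\sigma)/D(X_\sigma)\asymp \sigma^2T^2$ --- would force $\ve_T\asymp T^{-1}e^{\sigma^2T^2/2}$, which is incompatible with $1+\ve_T g(Tx)\ge 0$ for a bounded $g$; and the displayed chain $e^{-2\sigma^2T^2}\le e^{-c(\sigma)\ve_T^{-2}e^{\sigma^2T^2}\cdot e^{-\sigma^2T^2}}$ has smuggled an extra factor $e^{-\sigma^2T^2}$ into the exponent, so it bounds $e^{-c(\sigma)\ve_T^{-2}}$ rather than $e^{-c(\sigma)/D(X_\sigma)}$. (A secondary issue: for the difference $X_\sigma-Y_\sigma$ the relevant object is $|f_T(t)|^2$, whose first-order correction $2\ve_T\,\hat\varphi\,\Re\hat h$ does not vanish unless the perturbation $h$ is odd, so even the quadratic gain requires more structure than ``mean-zero $g$''.)

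The paper's construction avoids this by perturbing at \emph{low} frequency with a purely odd phase: it takes $f_\sigma(t)=\exp\{-(1+\sigma^2)t^2/2+it^3/T\}$, shows by a saddle-point analysis that the inverse transform is a genuine (after a truncation costing only $e^{-cT^2}$) probability density, and computes $D(U_\sigma)=3(1+\sigma^2)^{-3}T^{-2}+O(T^{-3})$ --- only \emph{polynomially} small. For the difference, the cubic phase cancels exactly in $f(t)\overline{f(t)}=e^{-(1+\sigma^2)t^2}$, so $U_\sigma-V_\sigma$ is Gaussian up to the exponentially small truncation terms, giving $D(U_\sigma-V_\sigma)\le e^{-cT^2}=e^{-c'(\sigma)/D(U_\sigma)}$. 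The essential mechanism is thus exact algebraic cancellation of an odd low-frequency phase under $f\ov f$, not a quadratic gain from self-convolving a damped high-frequency bump; to salvage your approach you would need to replace the multiplicative oscillation by a perturbation whose effect on $X_\sigma$ survives the noise at polynomial order yet cancels identically (not merely to second order) in the difference.
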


The paper is organized as follows. In Section~2 we describe preliminary 
steps by introducing truncated random variables $X^*$ and $Y^*$. Since their 
characteristic functions represent entire functions, this reduction of 
Theorems~1.1-1.2 to the case of truncated random variables allows to invoke 
powerful methods of complex analysis. In Section~3, $D(X_{\sigma})$ is 
estimated in terms of the entropic distance to the normal distribution 
for the regularized random variables $X_{\sigma}^*$, while an analogous 
result for the Fisher information distance is obtained in Section~4. 
In Section~5, the product of the 
characteristic functions of $X^*$ and $Y^*$ is shown  to be close to 
the normal characteristic function in a disk of large radius depending on 
$1/D(X_{\sigma}+Y_{\sigma})$ in the proof of Theorem~1.1 and on $1/J_{st}(X_{\sigma}+Y_{\sigma})$
in the proof of Theorem~1.2. 
In Section~6, we deduce  by means of saddle-point methods a special 
representation for the derivatives of the density of the random variables 
$X_{\sigma}$, which is needed in Sections~7-8.
Based on the resulting bounds for the density of $X_{\sigma}^*$, 
we establish the desired upper bounds for $D(X_{\sigma}^*)$ and 
$J_{st}(X_{\sigma}^*)$ in Sections~9 and 10, respectively.
In Section~11 we construct an example, showing the sharpness of the estimates 
of Theorems~1.1-1.2.

\section{{\bf Truncated random variables}}

Turning to the proof of Theorem 1.1, let us fix several standard notations. 
By
$$
(F * G)(x) = \int_{-\infty}^\infty F(x-y)\,dG(y), \qquad x \in \R,
$$ 
we denote the convolution of given distribution functions $F$ and $G$.
This operation will only be used when $G = \Phi_b$ is the normal 
distribution function with mean zero and a standard deviation $b>0$. 
We omit the index in case $b=1$, so that $\Phi_b(x) = \Phi(x/b)$
and $\varphi_b(x) = \frac{1}{b}\,\varphi(x/b)$.

The Kolmogorov (uniform) distance between $F$ and $G$ is denoted by
$$
\|F - G\| = \sup_{x \in \R} |F(x) - G(x)|,
$$
and $\|F - G\|_{\rm TV}$ denotes the total variation distance.
In general, $\|F - G\| \leq \frac{1}{2}\, \|F - G\|_{\rm TV}$, while 
the well-known Pinsker inequality provides an upper bound for the total 
variation in terms of the relative entropy. Namely,
$$
\|F - G\|_{\rm TV}^2 \leq
2 \int_{-\infty}^{\infty} p(x)\,\log \frac{p(x)}{q(x)}\,dx,
$$
where $F$ and $G$ are assumed to have densities $p$ and $q$, respectively.

In the required inequality (\ref{l1}) of Theorem 1.1, we may assume that
$X$ and $Y$ have mean zero, and that $D(X_{\sigma} + Y_{\sigma})$ is small. 
Thus, from now on our basic hypothesis may be stated as 
\be
D(X_{\sigma} + Y_{\sigma}) \le 2\varepsilon \qquad (0 < \ep \leq \varepsilon_0),
\en
where $\varepsilon_0$ is a sufficiently small absolute constant.
By Pinsker's inequality, this yields bounds for the total variation and 
Kolmogorov distances
\be
||F_{\sigma}*G_{\sigma}-\Phi_{\sqrt{1+2\sigma^2}}|| \le \frac{1}{2}\,
||F_{\sigma}*G_{\sigma}-\Phi_{\sqrt{1+2\sigma^2}}||_{\rm TV} \le 
\sqrt{\varepsilon} < 1, 
\en
where $F_{\sigma}$ and $G_{\sigma}$ are the distribution functions of 
$X_{\sigma}$ and $Y_{\sigma}$, respectively.
Moreover, without loss of generality, one may assume that
\begin{equation}\label{2.0}
\sigma^2\ge \tilde{c}(\log\log(1/\varepsilon)/\log(1/\varepsilon))^{1/3}
\end{equation}
with a sufficiently large absolute constant $\tilde{c}>0$.
Indeed if (\ref{2.0}) does not hold, the statement of the theorem
obviously holds.

In the inequality (\ref{l6}) of Theorem 1.2, we assume that $X$ and $Y$ 
have mean zero, and that $J_{st}(X_{\sigma} + Y_{\sigma})$ is small, i.e., 
\be
J_{st}(X_{\sigma} + Y_{\sigma}) \le 2\varepsilon \qquad 
(0 < \ep \leq \varepsilon_0),
\en
where $\varepsilon_0$ is a sufficiently small absolute constant. 
Then, according to the Stam inequality (\ref{l4}), the relative entropy 
has to be small as well, namely $D(X_{\sigma} + Y_{\sigma}) \le \varepsilon$.

Moreover, without loss of generality, we assume that
\begin{equation}\label{8.0}
\sigma^2\ge \hat{c}\log\log(1/\varepsilon)/(\log(1/\varepsilon))^{1/3}
\end{equation}
with a sufficiently large absolute constant $\hat{c}>0$.

We shall need some auxiliary assertions about truncated random variables. 
Let $F$ and $G$ be the distribution functions of independent, mean zero 
random variables $X$ and $Y$ with second moments 
$\E X^2 = v_1^2$, $\E Y^2 = v_2^2$, such that $\Var(X+Y)=1$. Put 
$$
N = N(\varepsilon) = \sqrt{1+2\sigma^2} \, 
\big(1+\sqrt{2\log(1/\varepsilon)}\,\big)
$$
with a fixed parameter $0<\sigma\le 1$.

Introduce truncated random variables at level $N$.
Put $X^*=X$ in case $|X|\le N$, $X^*=0$ in case $|X|>N$, and similarly 
$Y^*$ for $Y$. Note that
\begin{align}
&\E X^* \equiv a_1=\int_{-N}^N x\,dF(x), \qquad 
\Var(X^*) \equiv \sigma_1^2=\int_{-N}^N x^2\,dF(x)-a_1^2,\notag\\
&\E Y^* \equiv a_2=\int_{-N}^N x\,dG(x), \qquad 
\Var(Y^*) \equiv \sigma_2^2=\int_{-N}^N x^2\,dG(x)-a_2^2.\notag
\end{align}
By definition, $\sigma_1 \leq v_1$ and $\sigma_2 \leq v_2$. 
In particular,
$$
\sigma_1^2 + \sigma_2^2 \leq v_1^2 + v_2^2 = 1.
$$

Denote by $F^*,G^*$ the distribution functions of the truncated 
random variables $X^*,Y^*$, and respectively by $F^*_{\sigma},G^*_{\sigma}$
the distribution functions of the regularized random variables 
$X_{\sigma}^* = X^* + \sigma Z$ and $Y_{\sigma}^* = Y^* + \sigma Z'$, 
where $Z,Z'$ are independent standard normal random variables that are 
independent of $(X,Y)$.

\begin{lemma}\label{lem1}
With some absolute constant $C$ we have
\begin{equation}\notag
0\le 1-(\sigma_1^2+\sigma_2^2)\le CN^2\sqrt{\varepsilon}.
\end{equation} 
\end{lemma}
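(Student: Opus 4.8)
The plan is to estimate the tail contributions lost by truncation at level $N$ and to bound the resulting shift in the mean and second moment. Write $v_1^2 + v_2^2 = 1$ by hypothesis, so $1 - (\sigma_1^2 + \sigma_2^2) = (v_1^2 - \sigma_1^2) + (v_2^2 - \sigma_2^2)$, and it suffices to bound each term, say $v_1^2 - \sigma_1^2$, by $\frac{1}{2}CN^2\sqrt{\varepsilon}$. Since $\sigma_1 \le v_1$ the quantity is nonnegative, which gives the left inequality for free. For the right inequality, decompose
\[
v_1^2 - \sigma_1^2 = \Big(\E X^2 - \int_{-N}^N x^2\,dF(x)\Big) + a_1^2
= \int_{|x|>N} x^2\,dF(x) + a_1^2,
\]
and similarly $a_1 = \E X - \int_{-N}^N x\,dF(x) = -\int_{|x|>N} x\,dF(x)$ because $\E X = 0$. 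Thus both the tail second moment and $a_1^2$ (which is at most $\big(\int_{|x|>N} |x|\,dF(x)\big)^2$, controlled by the tail second moment times $\P(|X|>N)$) reduce to bounding $\int_{|x|>N} x^2\,dF(x)$.

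The key input is the Kolmogorov bound (2.2): $\|F_\sigma * G_\sigma - \Phi_{\sqrt{1+2\sigma^2}}\| \le \sqrt{\varepsilon}$. I would first transfer this to a tail estimate for $X_\sigma + Y_\sigma$, and then to $X$ itself. Concretely, for $t > 0$,
\[
\P(X_\sigma + Y_\sigma > t) \le 1 - \Phi_{\sqrt{1+2\sigma^2}}(t) + \sqrt{\varepsilon}
\le e^{-t^2/(2(1+2\sigma^2))} + \sqrt{\varepsilon},
\]
and symmetrically for the left tail. Since $X_\sigma + Y_\sigma = X + Y + \sigma(Z+Z')$ and the Gaussian part is independent with variance $2\sigma^2 \le 2$, a standard deconvolution step (conditioning on $\{|\sigma(Z+Z')| \le 1\}$, an event of probability bounded below by an absolute constant) yields $\P(|X+Y|>t) \le C_0\big(e^{-(t-1)^2/(2(1+2\sigma^2))} + \sqrt{\varepsilon}\big)$ for $t \ge 1$. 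Finally, to pass from $X+Y$ to $X$ alone, note $\{|X|>t\}$ forces $|X+Y|>t/2$ unless $|Y|>t/2$; using the Paley–Zygmund-type observation that $\E Y^2 = v_2^2 \le 1$ forces $\P(|Y|>t/2) \le 4/t^2$, one gets a tail bound for $X$ of the form $\P(|X|>t) \le C_1(t^{-2} + \sqrt{\varepsilon})$ up to the Gaussian-decaying term, valid for $t$ up to order $\sqrt{\log(1/\varepsilon)}$.

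The main obstacle, and the place where the specific form $N = \sqrt{1+2\sigma^2}(1+\sqrt{2\log(1/\varepsilon)})$ is used, is calibrating the crossover: for $t$ near $N$ the Gaussian term $e^{-t^2/(2(1+2\sigma^2))}$ is of order $\varepsilon$, i.e. comparable to $\sqrt{\varepsilon}$ up to the polynomial factors, so one obtains $\P(|X|>t) \le C_2\sqrt{\varepsilon}$ for all $t \ge N/2$, say. Then
\[
\int_{|x|>N} x^2\,dF(x) = N^2\,\P(|X|>N) + \int_N^\infty 2t\,\P(|X|>t)\,dt,
\]
and since $\P(|X|>t) \le C_2\sqrt{\varepsilon}\cdot e^{-(t^2-N^2)/(4(1+2\sigma^2))}$ for $t \ge N$ (the Gaussian tail now dominates and decays faster than the truncation scale $N$), the integral is bounded by $C N^2 \sqrt{\varepsilon}$. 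Combining the contributions of $X$ and $Y$ gives $1 - (\sigma_1^2 + \sigma_2^2) \le C N^2 \sqrt{\varepsilon}$, as claimed. I expect the routine bookkeeping to lie in the deconvolution step and in tracking absolute constants through the crossover; the conceptual content is entirely the transfer from the Kolmogorov distance (2.2) to subgaussian-type tail bounds for the individual summands.
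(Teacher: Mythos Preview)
Your argument contains a genuine gap at the step where you assert the tail bound
\[
\P(|X|>t) \le C_2\sqrt{\varepsilon}\, e^{-(t^2-N^2)/(4(1+2\sigma^2))}\qquad(t\ge N).
\]
Nothing you have established yields this. Your decomposition gives only
\[
\P(|X|>t)\le \P(|X+Y|>t/2)+\P(|Y|>t/2),
\]
and the two pieces are bounded respectively by something of the form $e^{-c t^2}+\sqrt{\varepsilon}$ (from the Kolmogorov bound, after removing the Gaussian noise) and $4/t^2$ (Chebyshev). Neither the additive $\sqrt{\varepsilon}$ nor the $4/t^2$ has Gaussian decay in $t$, so when you insert this into
\[
\int_{|x|>N}x^2\,dF(x)=N^2\,\P(|X|>N)+\int_N^\infty 2t\,\P(|X|>t)\,dt,
\]
the integral on the right is not controlled: $\int_N^\infty 2t\cdot\sqrt{\varepsilon}\,dt$ and $\int_N^\infty 2t\cdot 4/t^2\,dt$ both diverge. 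Even combining with the crude Chebyshev bound $\P(|X|>t)\le v_1^2/t^2$ only gives $\int_N^\infty 2t\cdot t^{-2}\,dt=\infty$. In fact no pointwise tail bound on $\P(|X|>t)$ obtainable from the hypotheses can force $\int_{|x|>N}x^2\,dF$ to be small for each summand separately: mass of size $\delta$ placed at a point $M$ with $\delta M^2$ fixed shows that an arbitrarily small tail probability is compatible with a tail second moment equal to the full variance.

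The paper circumvents this by never trying to bound the individual tail second moments directly. Instead it works with the \emph{truncated} sum $X^*+Y^*$, whose support lies in $[-2N,2N]$, and computes its mean and second moment via integration by parts against $F^*_\sigma*G^*_\sigma-\Phi_{\sqrt{1+2\sigma^2}}$. Because the integrand is supported on a window of length $O(N)$, the Kolmogorov bound $O(\sqrt{\varepsilon})$ from Lemma~2.3 suffices to give $|a_1+a_2|\le CN\sqrt{\varepsilon}$ and $|\E(X^*+Y^*)^2-1|\le CN^2\sqrt{\varepsilon}$; then $1-(\sigma_1^2+\sigma_2^2)=1-\E(X^*+Y^*)^2+(a_1+a_2)^2$ yields the lemma. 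The individual bound $\int_{|x|>N}x^2\,dF\le CN^2\sqrt{\varepsilon}$ (Corollary~2.4) is deduced \emph{afterwards} from the lemma, not the other way around.
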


For the proof of Lemma~\ref{lem1}, we use arguments from [B-C-G3].
It will be convenient to divide the proof into several steps.

\vskip2mm
\begin{lemma}\label{lem2'}
For any $M>0$,
\begin{align}
1-F(M) + F(-M) 
 &\le 
2\,\big(1-F_{\sigma}(M) + F_{\sigma}(-M)\big)\notag\\
 &\le 
4\Phi_{\sqrt{1+2\sigma^2}}(-(M-2))+4\sqrt{\ep}. \notag
\end{align} 
The same inequalities hold true for $G$.
\end{lemma}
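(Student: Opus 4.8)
The plan is to establish the two inequalities separately, working from the outside in. For the first inequality $1-F(M)+F(-M)\le 2\,(1-F_\sigma(M)+F_\sigma(-M))$, I would note that the left-hand side is $\P(|X|>M)$ and the right-hand side involves $\P(|X_\sigma|>M)=\P(|X+\sigma Z|>M)$. The key observation is that conditionally on $X$, the Gaussian perturbation $\sigma Z$ does not move $X$ by more than a bounded amount with probability at least $\tfrac12$; more precisely, $\P(\sigma Z \ge 0)=\P(\sigma Z\le 0)=\tfrac12$, so on the event $\{X> M\}$ we have $\{X+\sigma Z> M\}$ with conditional probability at least $\tfrac12$, and symmetrically on $\{X<-M\}$. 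Integrating out gives $\P(|X_\sigma|>M)\ge \tfrac12\,\P(|X|>M)$, which is exactly the claimed first inequality. (One must be slightly careful with the non-strict versus strict inequalities at the endpoints $\pm M$, but since $F$ has at most countably many atoms this costs nothing, or one can simply phrase everything in terms of $F(-M)+1-F(M)$ as written.)

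For the second inequality I would pass from $X_\sigma$ to $X_\sigma+Y_\sigma$ using the hypothesis $\Var(X+Y)=1$ and the smallness assumption~(2.2), which via Pinsker's inequality gives the Kolmogorov bound $\|F_\sigma*G_\sigma-\Phi_{\sqrt{1+2\sigma^2}}\|\le\sqrt{\ep}$. The idea is that $Y_\sigma$ is itself spread out by its Gaussian component $\sigma Z'$, so $\P(|X_\sigma|>M)$ can be controlled by $\P(|X_\sigma+Y_\sigma|>M-c)$ plus a correction. Concretely, condition on $X_\sigma$: on $\{X_\sigma> M\}$, since $Y_\sigma = Y+\sigma Z'$ and $Y$ has mean zero, one wants $\P(Y_\sigma \ge -(M - (M-2))) = \P(Y_\sigma\ge -2)$ to be bounded below by an absolute constant. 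This is where one uses that $Y_\sigma$ has a Gaussian component and bounded variance ($\Var(Y_\sigma)=v_2^2+\sigma^2\le 1+\sigma^2\le 2$), together with Chebyshev, to get $\P(|Y_\sigma|\le 2)\ge \tfrac12$ say; hence $\P(|X_\sigma|>M)\le 2\,\P(|X_\sigma+Y_\sigma|>M-2)$. Finally $\P(|X_\sigma+Y_\sigma|>M-2)\le \P(|W|>M-2)+2\sqrt{\ep}$ where $W\sim N(0,1+2\sigma^2)$, using the Kolmogorov bound (2.3), and $\P(|W|>M-2)=2\Phi_{\sqrt{1+2\sigma^2}}(-(M-2))$. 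Chaining the constants gives the stated bound $4\Phi_{\sqrt{1+2\sigma^2}}(-(M-2))+4\sqrt\ep$.

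The main obstacle is bookkeeping the absolute constants so that they collapse to exactly the factors $2$, $4$, $4$ in the statement rather than merely ``some constant''; in particular one needs the two anti-concentration/Chebyshev estimates for $\sigma Z$ and for $Y_\sigma$ to each yield probability at least $\tfrac12$, which forces the slightly awkward shift from $M$ to $M-2$ (so that $2$ standard-deviation-sized room is available for $Y_\sigma$). The symmetric statement for $G$ in place of $F$ is obtained verbatim by interchanging the roles of $X$ and $Y$, using that the hypotheses are symmetric in the two summands. I would also remark that this is essentially Lemma~3.2 of [B-C-G3], so the write-up can be kept brief and can cite that source for the routine parts.
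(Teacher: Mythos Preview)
Your proposal is correct and follows essentially the same route as the paper: the first inequality via $\P(\sigma Z\ge 0)=\tfrac12$ conditionally on $X$ (the paper writes this as an integral identity), and the second via Chebyshev on $Y_\sigma$ (using $\Var(Y_\sigma)\le 2$) to pass from $X_\sigma$ to $X_\sigma+Y_\sigma$ with a shift of $2$, followed by the Kolmogorov bound~(2.2). Your remark about the constant bookkeeping is apt---indeed the paper's own argument, read literally, yields $8\Phi_{\sqrt{1+2\sigma^2}}(-(M-2))+8\sqrt{\ep}$ rather than the stated $4$'s, but only absolute constants are used downstream.
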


\vskip2mm
\begin{proof}
Since both $F_\sigma$ and $G_\sigma$ are continuous functions, 
one may assume that $\pm M$ are points of continuity for $F$ and $G$.
First note that 
\begin{align}
\frac{1}{2}\, F(-M) = \frac{1}{2} \int_{-\infty}^{-M}dF(u) & =
\int_{-\infty}^{-M} dF(u)\int_{-\infty}^0\varphi_{\sigma}(s)\,ds\notag\\
&\le\int_{-\infty}^{\infty}dF(u)\int_{-\infty}^{-M}\varphi_{\sigma}(s-u)\,ds=
F_{\sigma}(-M).\notag
\end{align}
In the same way, $\frac{1}{2}\, (1 - F(M)) \leq 1-F_{\sigma}(M)$, 
thus proving the first inequality of the lemma. For the second one, 
first note that $\E Y_\sigma^2 = v_2^2 + \sigma^2 \leq 2$, so that
$\P(|Y_\sigma| \geq 2) \leq \frac{1}{2}$, by Chebyshev's inequality.
Hence,
\bee
\frac{1}{2}\,F_\sigma(-M)
 & = & 
\frac{1}{2}\, \P(X_{\sigma}\leq-M) \\ 
 & \le &
\P(X_{\sigma}\leq-M, \, |Y_{\sigma}| \le 2)
 \ \le \ 
\P(X_{\sigma}+Y_{\sigma}\le -M+2) \\
 & = & 
(F_{\sigma}*G_{\sigma})(-M+2) \ \le \ 
\Phi_{\sqrt{1+2\sigma^2}}(-M+2) + \sqrt{\ep},\notag 
\ene
where we used (2.2) on the last step. By a similar argument,
$$
\frac{1}{2}\,(1-F_\sigma(M)) \leq 
\Phi_{\sqrt{1+2\sigma^2}}(-M+2) + \sqrt{\ep},
$$
thus proving the second inequality of the lemma for $F$. Similarly, one
obtains the assertion of the lemma for the distribution function $G$. 
\end{proof}

\begin{lemma}\label{lem2}
With some positive absolute constant $C$ we have
$$
||F^*-F||_{\rm TV} \le  C\sqrt{\ep}, \qquad 
||G^*-G||_{\rm TV}\le C\sqrt{\ep},
$$
$$
||F^*_{\sigma}*G^*_{\sigma}-\Phi_{\sqrt{1+2\sigma^2}}||_{\rm TV}\le 
C\sqrt{\ep}.
$$
\end{lemma}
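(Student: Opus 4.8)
The plan is to bound the total variation distance between the truncated and original laws by the mass that truncation removes, and then to transfer the resulting estimate through convolution with the Gaussian kernel. First I would observe that $X^*$ differs from $X$ only on the event $\{|X| > N\}$, and on that event $X^* = 0$. Hence the distributions $F$ and $F^*$ agree when restricted to the complement of $\{0\} \cup (\{|x|>N\})$, and so $\|F^* - F\|_{\rm TV} \le 2\,\P(|X| > N) = 2\,(1 - F(N) + F(-N))$ (with the usual care that the atom created at $0$ contributes at most this same mass). The task is therefore to show $\P(|X| > N) \le C\sqrt{\ep}$. This is exactly what Lemma~\ref{lem2'} delivers with $M = N$: it gives
$$
1 - F(N) + F(-N) \le 4\,\Phi_{\sqrt{1+2\sigma^2}}\big(-(N-2)\big) + 4\sqrt{\ep},
$$
and since $N = \sqrt{1+2\sigma^2}\,(1 + \sqrt{2\log(1/\ep)})$ we have $N - 2 \ge \tfrac12\,\sqrt{1+2\sigma^2}\,\sqrt{2\log(1/\ep)}$ for $\ep$ small (here one uses $\ep \le \ep_0$), so the Gaussian tail is bounded by $\Phi_{\sqrt{1+2\sigma^2}}(-(N-2)) \le \exp\{-(N-2)^2/(2(1+2\sigma^2))\} \le \exp\{-\tfrac14\log(1/\ep)\} = \ep^{1/4}$, which is itself $\le \sqrt{\ep}$ up to adjusting $\ep_0$ — or, if one prefers a clean $\sqrt{\ep}$, one can simply note $\Phi_{\sqrt{1+2\sigma^2}}(-(N-2)) \le \sqrt{\ep}$ directly from the choice of $N$. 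Either way, $\|F^* - F\|_{\rm TV} \le C\sqrt{\ep}$, and the same argument verbatim gives $\|G^* - G\|_{\rm TV} \le C\sqrt{\ep}$.

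For the third inequality I would use the fact that total variation does not increase under convolution: for any distribution function $H$, $\|(F^* * H) - (F * H)\|_{\rm TV} \le \|F^* - F\|_{\rm TV}$, and similarly in the second argument. Writing $F^*_\sigma * G^*_\sigma = F^* * G^* * \Phi_{\sqrt{2}\,\sigma}$ and $F_\sigma * G_\sigma = F * G * \Phi_{\sqrt{2}\,\sigma}$ (the two independent noises $\sigma Z$ and $\sigma Z'$ combine to a single $N(0, 2\sigma^2)$), I would estimate
$$
\|F^*_\sigma * G^*_\sigma - F_\sigma * G_\sigma\|_{\rm TV}
\le \|F^* - F\|_{\rm TV} + \|G^* - G\|_{\rm TV} \le 2C\sqrt{\ep},
$$
by inserting the intermediate term $F^* * G * \Phi_{\sqrt{2}\sigma}$ and applying the convolution-contraction property to each difference. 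Combining this with the hypothesis (2.2), namely $\|F_\sigma * G_\sigma - \Phi_{\sqrt{1+2\sigma^2}}\|_{\rm TV} \le 2\sqrt{\ep}$, and the triangle inequality in total variation gives
$$
\|F^*_\sigma * G^*_\sigma - \Phi_{\sqrt{1+2\sigma^2}}\|_{\rm TV} \le (2C + 2)\sqrt{\ep},
$$
which is the claim after renaming the constant.

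There is no serious obstacle here; the only points requiring a little care are bookkeeping ones. The mild subtlety is the first reduction $\|F^* - F\|_{\rm TV} \le 2\,\P(|X|>N)$: truncation both deletes the mass of $F$ outside $[-N,N]$ and deposits it as an atom at $0$, so one should phrase this via the variational definition $\|F^* - F\|_{\rm TV} = 2\sup_A |F^*(A) - F(A)|$ and note that for every Borel set $A$ the discrepancy is controlled by the mass $\P(|X|>N)$ that was moved. The second point is simply keeping the absolute constants absolute — in particular checking that the threshold $\ep_0$ needed to make $N - 2 \ge \tfrac12(N-1)$ (equivalently $N \ge 3$) is independent of $\sigma$, which it is since $N \ge 1 + \sqrt{2\log(1/\ep)} \to \infty$ as $\ep \to 0$ uniformly in $\sigma$. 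Everything else is the standard fact that convolution is a contraction in total variation, applied twice.
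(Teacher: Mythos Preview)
Your proof is correct and follows essentially the same route as the paper's: bound $\|F^*-F\|_{\rm TV}$ by $2\,\P(|X|>N)$, invoke Lemma~\ref{lem2'} at $M=N$ together with the choice of $N$ to get $C\sqrt{\ep}$, then pass to the convolution via the triangle inequality (convolution contraction) and hypothesis~(2.2). One small slip: $\ep^{1/4}$ is \emph{larger} than $\sqrt{\ep}$ for $\ep<1$, not smaller, so your first tail computation doesn't close as written; but your alternative observation that $\Phi_{\sqrt{1+2\sigma^2}}(-(N-2)) \le \sqrt{\ep}$ directly from the definition of $N$ (indeed $(N-2)/\sqrt{1+2\sigma^2} \ge \sqrt{2\log(1/\ep)}-1$) is the correct fix, and this is exactly how the paper handles it.
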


\begin{proof}
The distribution $F^*$ of $X^*$ is supported on the interval $[-N,N]$,
where it coincides with $F$ as a measure up to an atom at zero of size 
$\P(|X|>N)$. Applying Lemma~\ref{lem2'} with $M = N$, we
therefore obtain that
\bee
||F^*-F||_{\rm TV} 
 & \le &
2\,\P(|X|>N) \ \leq \ 2\,\big(F(-N) + (1 - F(N))\big) \\
 & \leq &
8\,\Phi_{\sqrt{1+2\sigma^2}}(-N + 2) + 8\sqrt{\ep}.
\ene
By the choice of the function $N$, the latter expression does not exceed
$c\sqrt{\ep}$ up to some absolute constant $c>0$, so
$||F-F^*||_{\rm TV} \le c \sqrt{\varepsilon}$. Similarly, we have
$||G-G^*||_{\rm TV}\le c \sqrt{\varepsilon}$. From this,
using the triangle inequality, we conclude that 
$$
||F_{\sigma}*G_{\sigma} - F^*_{\sigma}*G^*_{\sigma}||_{\rm TV}
\le 2c \sqrt{\varepsilon}
$$ 
and then, by (2.2), 
$||F^*_{\sigma}*G^*_{\sigma} - \Phi_{\sqrt{1+2\sigma^2}}||_{\rm TV}
\le (2c+2)\sqrt{\varepsilon}$.
\end{proof}

\vskip2mm
{\bf Proof of Lemma~2.1}.
Since $\E\,(X^*_\sigma + Y^*_\sigma) = \E\,(X^* + Y^*) = a_1 + a_2$,
we have, integrating by parts,
\bee
a_1+a_2
 & = &
\int_{-\infty}^{\infty}x\,
d\big((F^*_{\sigma}*G^*_{\sigma})(x)-\Phi_{\sqrt{1+2\sigma^2}}(x)\big) \\
 & = & 
-\int_{-\infty}^{\infty}
\big((F^*_{\sigma}*G^*_{\sigma})(x)-\Phi_{\sqrt{1+2\sigma^2}}(x)\big)\,dx.
\ene
The modulus of the last integral does not exceed
\begin{equation}\notag
\int_{-4N}^{4N}
\big|(F^*_{\sigma}*G^*_{\sigma})(x)-\Phi_{\sqrt{1+2\sigma^2}}(x)\big|\,dx +
\int_{|x|>4N}
\big|(F^*_{\sigma}*G^*_{\sigma})(x)-\Phi_{\sqrt{1+2\sigma^2}}(x)\big|\,dx,
\end{equation}
where, by Lemma~\ref{lem2}, the first integral may be bounded by
$8NC\sqrt{\ep}$. Using the property that $(F^**G^*)(s) = 0$ for 
$s < -2N$ and $(F^**G^*)(s) = 1$ for $s > 2N$,
the second integral may be bounded by
\begin{align}
&\int_{|x|>4N}\,dx\Big(\int_{-2N}^{2N}
\varphi_{\sqrt 2\sigma}(x-s)|(F^**G^*)(s)-\Phi(s)|\,ds\notag\\
&+2\int_{|s|>2N}\varphi_{\sqrt 2\sigma}(x-s)
(1-\Phi(|s|))\,ds\Big)
\leq \int_{|x|>4N}\,dx\int_{-2N}^{2N}\varphi_{\sqrt 2\sigma}(x-s)\,ds\notag\\
&+2\int_{|s|>2N}(1-\Phi(|s|))\,ds 
\int_{|x|>4N}\varphi_{\sqrt 2\sigma}(x-s)\,dx\leq C\sqrt{\varepsilon}\notag
\end{align}
with some positive absolute constant $C$. Hence 
\begin{equation}\label{lem1.1}
|a_1+a_2|\le C_1N\sqrt{\varepsilon}
\end{equation}
with an absolute constant $C_1$.

An estimation of the second moment $\E\,(X^*_\sigma + Y^*_\sigma)^2$ is
based on the identity
\begin{align}
\int_{-\infty}^{\infty}x^2\,
d\big((F^*_{\sigma}*G^*_{\sigma})(x)-\Phi_{\sqrt{1+2\sigma^2}}(x)\big) =
-2\int_{-\infty}^{\infty} x
\big((F^*_{\sigma}*G^*_{\sigma})(x)-\Phi_{\sqrt{1+2\sigma^2}}(x)\big)\,dx.\notag 
\end{align}
Using the previous arguments, we obtain that
\begin{equation}\notag
\Big|\int_{-4N}^{4N}x\,
\big((F^*_{\sigma}*G^*_{\sigma})(x)-\Phi_{\sqrt{1+2\sigma^2}}(x)\big)\,dx\Big|
 \le 32N^2C\sqrt{\ep},
\end{equation}
while
\begin{align}
 &\Big|\int_{|x|>4N}x\,
\big((F^*_{\sigma}*G^*_{\sigma})(x)-\Phi_{\sqrt{1+2\sigma^2}}(x)\big)\,dx\Big|\notag\\
 &\le
\int_{|x|>4N}|x|\,dx\Big(\int_{-2N}^{2N}
\varphi_{\sqrt 2\sigma}(x-s)|(F^**G^*)(s)-\Phi(s)|\,ds\notag\\&+2\int_{|s|>2N}
\varphi_{\sqrt 2\sigma}(x-s)(1-\Phi(|s|))\,ds\Big)
\leq \int_{|x|>4N}|x|\,dx\int_{-2N}^{2N}\varphi_{\sqrt 2\sigma}(x-s)\,ds\notag\\
&+2\int_{|s|>2N}(1-\Phi(|s|))\,ds 
\int_{|x|>4N}|x|\,\varphi_{\sqrt 2\sigma}(x-s)\,dx
\leq  C_1\sqrt{\ep}.\notag
\end{align}
Hence, we finally get
\begin{equation}\label{lem1.2}
|\,\E (X^*+Y^*)^2-1| =
|\,\E (X^*_{\sigma}+Y^*_{\sigma})^2-(1+2\sigma^2)| \le C_2N^2\sqrt{\ep} 
\end{equation}
with some absolute constant $C_2$.
The assertion of the lemma follows immediately from (\ref{lem1.1}) and (\ref{lem1.2}). 
\qed

\vskip5mm
\noindent
{\bf Corollary 2.4.} {\it With some absolute constant $C$, we have
$$
\int_{|x|>N} x^2\,dF(x) \leq CN^2\sqrt{\ep}, \qquad
\int_{|x|>2N} x^2\,d(F_{\sigma}(x) + F_{\sigma}^*(x)) \leq CN^2\sqrt{\ep},
$$
and similarly for $G$ replacing $F$.
}

\vskip2mm
\noindent
{\it Proof.} By the definition of truncated random variables,
$$
v_1^2 = \sigma_1^2 + a_1^2 + \int_{|x|>N} x^2\,dF(x), \qquad
v_2^2 = \sigma_2^2 + a_2^2 + \int_{|x|>N} x^2\,dG(x),
$$
so that, by Lemma 2.1,
$$
\int_{|x|>N} x^2\,d(F(x) + G(x)) \leq 1 - (\sigma_1^2 + \sigma_2^2) \leq
CN^2\sqrt{\ep}.
$$
As for the second integral of the corollary, we have
\bee 
\int_{|x|>2N} x^2\,dF_{\sigma}(x)
 & = &
\int_{|x|>2N} x^2 
\bigg[\int_{-\infty}^{\infty}\varphi_{\sigma}(x-s)\,dF(s)\bigg]\,dx \\
 & = &
\int_{-\infty}^{\infty} dF(s)
\int_{|x|>2N}x^2\varphi_{\sigma}(x-s)\,dx \\
 &\le & 
2\int_{-N}^{N}s^2\,dF(s) \int_{|u|>N}\varphi_{\sigma}(u)\,du + 
2\int_{|s|>N}s^2\,dF(s)\int_{-\infty}^{\infty} \varphi_{\sigma}(u)\,du \\
 & & \hskip-5mm + \
2\int_{-N}^{N}\,dF(s)\int_{|u|>N}u^2\varphi_{\sigma}(u)\,du +
2\int_{|s|>N}\,dF(s)\int_{-\infty}^{\infty}u^2\varphi_{\sigma}(u)\,du.
\ene
It remains to apply the previous step together with the estimate 
$\int_N^{\infty}u^2\varphi_{\sigma}(u)\,du\le 
c\sigma N e^{-N^2/(2\sigma^2)}$.
The same estimate holds for $\int_{|x|>2N}x^2\,dF^*_{\sigma}(x)$.
\qed

\section{Entropic distance to normal laws for regularized random variables}

We keep the same notations as in the previous section and use the 
relations (2.1) and (2.4) when needed. In this section we obtain some results 
about the regularized random variables $X_{\sigma}$ and $X^*_{\sigma}$, 
which also hold for $Y_{\sigma}$ and $Y^*_{\sigma}$.
Denote by $p_{X_{\sigma}}$ and $p_{X^*_{\sigma}}$ the (smooth positive) 
densities of $X_{\sigma}$ and $X^*_{\sigma}$, respectively.

\vskip2mm
\begin{lemma}\label{lem3}
With some absolute constant $C$ we have, for all $x \in \R$,
\begin{align}
|p^{(k)}_{X_{\sigma}}(x) - p^{(k)}_{X^*_{\sigma}}(x)| \le 
C\sigma^{-2k-1}\sqrt{\ep},\quad k = 0,1,2.
\label{lem3.1}
\end{align} 
\end{lemma}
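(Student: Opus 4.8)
The plan is to write both densities as Gaussian convolutions and to bound the difference of their $k$-th derivatives by integrating the $k$-th derivative of the Gaussian kernel against the signed measure $d(F-F^*)$, whose total mass is already controlled by Lemma~\ref{lem2}.

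First I would use that $X_\sigma = X + \sigma Z$ has density
$$
p_{X_\sigma}(x) = \int_{\R} \varphi_\sigma(x-s)\,dF(s), \qquad x \in \R,
$$
and similarly $p_{X^*_\sigma}(x) = \int_{\R} \varphi_\sigma(x-s)\,dF^*(s)$. Since $\varphi_\sigma$ is $C^\infty$ with all derivatives bounded, differentiation under the integral sign is justified, so that for $k = 0,1,2$,
$$
p^{(k)}_{X_\sigma}(x) - p^{(k)}_{X^*_\sigma}(x) = \int_{\R} \varphi^{(k)}_\sigma(x-s)\,d(F-F^*)(s),
$$
and hence $\big|p^{(k)}_{X_\sigma}(x) - p^{(k)}_{X^*_\sigma}(x)\big| \le \|\varphi^{(k)}_\sigma\|_\infty\, \|F - F^*\|_{\rm TV}$ for every $x$.

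Next I would estimate $\|\varphi^{(k)}_\sigma\|_\infty$. From $\varphi_\sigma(u) = \frac{1}{\sigma}\,\varphi(u/\sigma)$ one gets $\varphi^{(k)}_\sigma(u) = \sigma^{-(k+1)}\,\varphi^{(k)}(u/\sigma)$, and since $\varphi^{(k)}(t) = (-1)^k H_k(t)\,\varphi(t)$ with $H_k$ the $k$-th Hermite polynomial, $M_k := \sup_t|\varphi^{(k)}(t)|$ is a finite absolute constant. Therefore $\|\varphi^{(k)}_\sigma\|_\infty \le M_k\,\sigma^{-(k+1)}$.

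Finally, Lemma~\ref{lem2} yields $\|F - F^*\|_{\rm TV} \le C\sqrt{\ep}$, whence
$$
\big|p^{(k)}_{X_\sigma}(x) - p^{(k)}_{X^*_\sigma}(x)\big| \le C M_k\, \sigma^{-(k+1)}\sqrt{\ep} \le C'\, \sigma^{-2k-1}\sqrt{\ep}
$$
for $0 < \sigma \le 1$ and $k = 0,1,2$, which is the claimed bound (in fact with the sharper exponent $-(k+1)$). The same computation with $G,G^*$ in place of $F,F^*$ gives the statement for $Y_\sigma,Y^*_\sigma$. I do not expect any real obstacle here; the only mild points are the justification of differentiating under the integral sign and reading $\|F-F^*\|_{\rm TV}$ as the total variation of the signed measure $d(F-F^*)$, against which the bounded function $\varphi^{(k)}_\sigma(x-\cdot)$ is integrated.
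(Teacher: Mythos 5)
Your proof is correct and follows essentially the same route as the paper: both bound the difference by integrating the kernel (or its derivatives) against the signed measure $d(F-F^*)$, whose mass is of order $\sqrt{\ep}$ by the truncation lemmas of Section~2; the paper phrases this via $1-F(N)+F(-N)\le C\sqrt{\ep}$ and says $k=1,2$ follow "in the same way," while you route it through $\|F-F^*\|_{\rm TV}\le C\sqrt{\ep}$ and make the bound $\|\varphi_\sigma^{(k)}\|_\infty\le M_k\sigma^{-(k+1)}$ explicit. Your sharper exponent $\sigma^{-(k+1)}$ indeed implies the stated $\sigma^{-2k-1}$ since $0<\sigma\le 1$.
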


\begin{proof}
Write
\begin{align}
p_{X_{\sigma }}(x)
 = \int_{-\infty}^{\infty}\varphi_{\sigma}(x-s)\,dF(s)&=
\int_{-N}^{N}\varphi_{\sigma}(x-s)\,dF(s)
+\int_{|s|>N}\varphi_{\sigma}(x-s)\,dF(s),\notag\\
p_{X^*_{\sigma}}(x)
 = \int_{-\infty}^{\infty}\varphi_{\sigma}(x-s)\,dF^*(s)& =
\int_{-N}^{N}\varphi_{\sigma}(x-s)\,dF(s)\notag\\
&+(1-F(N)+F((-N)-)\, \varphi_{\sigma}(x).\notag
\end{align}
Hence
\begin{equation}\notag
|p_{X_{\sigma} }(x)-p_{X^*_{\sigma}}(x)| \le 
\frac{1}{\sqrt{2\pi}\sigma}\,\big(1-F(N)+F(-N)\big).
\end{equation}
But, by Lemma~\ref{lem2'}, and recalling the definition of $N = N(\ep)$,
we have
$$
1-F(N)+F(-N) \le 2(1-F_{\sigma}(N)+F_{\sigma}(-N)) \leq C\sqrt{\ep}
$$ 
with some absolute constant $C$. Therefore,
$|p_{X_{\sigma} }(x)-p_{X^*_{\sigma}}| \le C\sigma^{-1}\sqrt{\ep}$,
which is the assertion (\ref{lem3.1}) of the lemma in case $k=0$.
We obtain (\ref{lem3.1}) for $k=1,2$ in the same way. The lemma is proved.
\end{proof}

\begin{lemma}\label{lem4}
With some absolute constant $C>0$ we have
\begin{equation}\label{lem4.a}
D(X_{\sigma})\le D(X_{\sigma}^*) + C\sigma^{-3} N^3\sqrt{\ep}.
\end{equation} 
\end{lemma}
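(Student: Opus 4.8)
The plan is to route everything through the Shannon entropy. Write $D(X_\sigma)=h(Z_\sigma)-h(X_\sigma)$ and $D(X_\sigma^*)=h(Z_\sigma^*)-h(X_\sigma^*)$, where $Z_\sigma\sim N(0,v_1^2+\sigma^2)$ and $Z_\sigma^*\sim N(a_1,\sigma_1^2+\sigma^2)$ are the accompanying Gaussians, so that $D(X_\sigma)-D(X_\sigma^*)=\big(h(Z_\sigma)-h(Z_\sigma^*)\big)+\big(h(X_\sigma^*)-h(X_\sigma)\big)$, and it suffices to bound each term from above by $C\sigma^{-3}N^3\sqrt{\ep}$. The Gaussian term equals $\frac12\log\frac{v_1^2+\sigma^2}{\sigma_1^2+\sigma^2}$, which is nonnegative and at most $\frac{v_1^2-\sigma_1^2}{2\sigma^2}$; by Lemma~\ref{lem1}, $0\le v_1^2-\sigma_1^2\le 1-(\sigma_1^2+\sigma_2^2)\le CN^2\sqrt{\ep}$, so this term is $O(N^2\sqrt{\ep}/\sigma^2)$, of lower order than the claimed bound.

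The heart of the proof is the estimate $|h(X_\sigma)-h(X_\sigma^*)|\le C\sigma^{-3}N^3\sqrt{\ep}$. Put $p=p_{X_\sigma}$, $q=p_{X^*_\sigma}$ and $\eta(t)=-t\log t$, so that $h(X_\sigma)-h(X_\sigma^*)=\int_\R\big(\eta(p(x))-\eta(q(x))\big)\,dx$, and split $\R=\{|x|\le 2N\}\cup\{|x|>2N\}$. On the bulk $|x|\le 2N$ apply the mean value theorem: $|\eta(p(x))-\eta(q(x))|\le |p(x)-q(x)|\,(|\log\xi_x|+1)$ with $\xi_x$ between $p(x)$ and $q(x)$. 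Lemma~\ref{lem3} gives $|p(x)-q(x)|\le C\sigma^{-1}\sqrt{\ep}$; the upper bound $\xi_x\le\frac1{\sigma\sqrt{2\pi}}$ is immediate, while for a lower bound one uses that both $X$ (by Chebyshev, since $\Var(X)\le 1$) and $X^*$ (which lives on $[-N,N]$) put at least half their mass on a bounded interval, whence $p(x),q(x)\ge c\sigma^{-1}e^{-CN^2/\sigma^2}$ for $|x|\le 2N$; since also $\sigma\le 1$, this forces $|\log\xi_x|\le CN^2/\sigma^2$ there. Integrating over an interval of length $4N$ gives a bulk contribution $\le C\sigma^{-3}N^3\sqrt{\ep}$, which is exactly the claimed order and the dominant term.

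For the tails I estimate $\int_{|x|>2N}\eta(p)\,dx$ and $\int_{|x|>2N}\eta(q)\,dx$ separately. As in the proof of Lemma~\ref{lem3}, for $|x|>2N$ one has $q(x)\le\varphi_\sigma(|x|-N)\le \ep/(\sigma\sqrt{2\pi})$ and $p(x)\le\varphi_\sigma(|x|-N)+\frac1{\sigma\sqrt{2\pi}}\,\P(|X|>N)\le C\sqrt{\ep}/\sigma$, using Lemma~\ref{lem2'} and the choice of $N$ (which makes $N^2\ge 2\log(1/\ep)$ and $\P(|X|>N)\le C\sqrt{\ep}$); since the lower bound on $\sigma^2$ in (\ref{2.0}) forces $C\sqrt{\ep}/\sigma\le 1$ for $\ep$ small, we get $\eta(p),\eta(q)\ge 0$ on the tails. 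Now for a density $r$ with $r\le 1$ on $\{|x|>2N\}$ and $\int_{|x|>2N}x^2 r(x)\,dx\le CN^2\sqrt{\ep}$ (valid for both $p$ and $q$ by Corollary~2.4), split $\{|x|>2N\}$ into $\{r(x)\ge e^{-x^2}\}$, where $\eta(r(x))\le x^2 r(x)$, and $\{r(x)<e^{-x^2}\}$, where $\eta$ is increasing (as $e^{-x^2}<1/e$), so $\eta(r(x))\le x^2 e^{-x^2}$; integrating, $\int_{|x|>2N}\eta(r)\,dx\le CN^2\sqrt{\ep}+Ce^{-N^2}\le C'N^2\sqrt{\ep}$. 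Combining the bulk and tail estimates yields $|h(X_\sigma)-h(X_\sigma^*)|\le C\sigma^{-3}N^3\sqrt{\ep}$, and adding the Gaussian term completes the proof.

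The step requiring the most care is the Shannon entropy comparison: since $t\mapsto -t\log t$ is neither bounded nor Lipschitz near $0$ (nor at infinity), the uniform density closeness of Lemma~\ref{lem3} is not by itself enough. One must inject (i) an exponential‑in‑$N^2/\sigma^2$ lower bound for $p$ and $q$ on the bulk --- this is precisely what turns the prefactor $\sigma^{-1}N$ into $\sigma^{-3}N^3$ --- and (ii) the $x^2$‑weighted tail bounds of Corollary~2.4 to handle the region where the densities are exponentially small; one should also check that (\ref{2.0}) is exactly what guarantees $\sqrt{\ep}/\sigma\le 1$, so that the tail integrands are nonnegative and the one‑sided bounds on $\int_{|x|>2N}\eta(p)\,dx$ and $\int_{|x|>2N}\eta(q)\,dx$ control $\big|\int_{|x|>2N}(\eta(p)-\eta(q))\,dx\big|$.
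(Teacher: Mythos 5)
Your proof is correct and follows essentially the same route as the paper's: the same treatment of the variance-ratio term $\frac12\log\frac{v_1^2+\sigma^2}{\sigma_1^2+\sigma^2}$ via Lemma~\ref{lem1}, the same split of the line at $\pm 2N$, and the same three inputs (the uniform closeness of Lemma~\ref{lem3}, the two-sided density bounds \eqref{lem4.1'}--\eqref{lem4.1}, and Corollary~2.4 for the tails). The only difference is bookkeeping: you compare the Shannon entropies through the mean value theorem for $t\mapsto -t\log t$ (with your self-contained $e^{-x^2}$-splitting on the tails), whereas the paper writes $\int p\log p-\int q\log q=\int(p-q)\log p+\int q\log(p/q)$ and handles the second term with Jensen's inequality and the bound $\int_{|x|\le 2N}|p-q|\,dx\le CN\sqrt{\ep}/\sigma$; both yield the same dominant bulk contribution $C\sigma^{-3}N^3\sqrt{\ep}$.
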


\begin{proof}
In general, if a random variable $U$ has density $u$ with finite 
variance $b^2$, then, by the very definition,
$$
D(U) =  \int_{-\infty}^{\infty} u(x)\log u(x)\,dx + \frac{1}{2}\,
\log(2\pi e\,b^2).
$$
Hence,
\begin{align} 
& D(X_{\sigma}) - D(X^*_{\sigma})=
\int_{-\infty}^{\infty} p_{X_{\sigma} }(x)\log p_{X_{\sigma} }(x)\,dx -
\int_{-\infty}^{\infty} p_{X^*_{\sigma} }(x)\log p_{X^*_{\sigma}}(x)\,dx\notag\\
& + \frac 12\log\frac{v_1^2+\sigma^2}{\sigma_1^2+\sigma^2}
=\int_{-\infty}^{\infty} (p_{X_{\sigma} }(x)-p_{X^*_{\sigma}}(x))
\log p_{X_{\sigma} }(x)\,dx\notag\\
& + \int_{-\infty}^{\infty}
p_{X^*_{\sigma}}(x)\log\frac{p_{X_{\sigma} }(x)}{p_{X^*_{\sigma}}(x)}\,dx
+\frac 12\log\frac{v_1^2+\sigma^2}{\sigma_1^2+\sigma^2}.\label{lem4.0}
\end{align}
Since $\E X^2 \leq 1$, necessarily $F(-2)+1-F(2)\le \frac{1}{2}$, so
\begin{equation}\label{lem4.1'}
\frac{1}{2\sigma\sqrt{2\pi}}\, e^{-(|x|+2)^2/(2\sigma^2)}\le 
p_{X^*_{\sigma}}(x)\le \frac{1}{\sigma\sqrt{2\pi}},
\end{equation}
and therefore
\begin{equation}\label{lem4.1}
|\log p_{X^*_{\sigma} }(x)|\le C\sigma^{-2}(x^2+4),\quad x\in\mathbb R,
\end{equation}
with some absolute constant $C$. The same estimate holds for 
$|\log p_{X_{\sigma}}(x)|$. 

Splitting the integration in
\begin{align} 
I_1 & =\int_{-\infty}^\infty 
(p_{X_{\sigma}}(x) - p_{X^*_{\sigma }}(x))\log p_{X_{\sigma}}(x)\,dx =
I_{1,1} + I_{1,2}
\notag\\
& = \Big(\int_{|x| \le 2N} + \int_{|x|>2N}\Big)
(p_{X_{\sigma}}(x)-p_{X^*_{\sigma}}(x))\log p_{X_{\sigma}}(x) \,dx,
\notag
\end{align}
we now estimate the integrals $I_{1,1}$ and $I_{1,2}$. 
By Lemma~\ref{lem3} and (\ref{lem4.1}), we get
$$
|I_{1,1}|\le C'\sigma^{-3}N^3\sqrt{\varepsilon}
$$
with some absolute constant $C'$. Applying (\ref{lem4.1}) together 
with Corollary 2.4, we also have
\bee 
|I_{1,2}| 
 & \le &
4C\sigma^{-2}\,
\big(1-F_{\sigma}(2N) + F_{\sigma}(-2N) + 1 - F^*_{\sigma}(2N) + F^*_{\sigma}(-2N)\big) \\
 & & + \ C\sigma^{-2}
\Big(\int_{|x|>2N}x^2\,dF_{\sigma}(x)+\int_{|x|>2N}x^2\,dF^*_{\sigma}(x)\Big)
 \ \le  \ C'\sigma^{-2}N^2\sqrt{\varepsilon}.
\ene
The two bounds yield
\begin{equation}\label{lem4.3}
|I_1|\le C''\sigma^{-3}N^3\sqrt{\varepsilon}
\end{equation}
with some absolute constant $C''$.

Now consider the integral
\begin{align}
I_2 
 & = \int_{-\infty}^\infty p_{X^*_{\sigma}}(x)
\log\frac{p_{X_{\sigma}}(x)}{p_{X^*_{\sigma} }(x)}\,dx =
I_{2,1}+I_{2,2}\notag\\
 & = \Big(\int_{|x|\le 2N}+\int_{|x|>2N}\Big)\ 
p_{X^*_{\sigma}}(x)\log\frac{p_{X_{\sigma}}(x)}{p_{X^*_{\sigma}}(x)}\,dx,\notag
\end{align}
which is non-negative, by Jensen's inequality.
Using $\log(1+t)\le t$ for $t\ge-1$, and Lemma~\ref{lem3}, we obtain
\bee
I_{2,1}
 & = &
\int_{|x|\le 2N}
p_{X^*_{\sigma}}(x)\log\Big(1+\frac{p_{X_{\sigma}}(x)-p_{X^*_{\sigma}}(x)}
{p_{X^*_{\sigma}}(x)}\Big)\,dx \\
 & \le & 
\int_{|x|\le 2N}|p_{X_{\sigma }}(x)-p_{X^*_{\sigma}}(x)|\,dx \ \le \ 
4C\sigma^{-1} N\sqrt{\ep}.
\ene
It remains to estimate $I_{2,2}$. We have as before, using (\ref{lem4.1}) 
and Corollary 2.4,
$$
|I_{2,2}|\le 
C\int_{|x|>2N}p_{X^*_{\sigma}}(x) \frac{x^2+4}{\sigma^2}\,dx\le
C'\sigma^{-2}N^2\sqrt{\ep}
$$
with some absolute constant $C'$. These bounds yield 
\be
I_2 \le C''\sigma^{-2}N^2\sqrt{\ep}.
\en

In addition, by Lemma~\ref{lem1},
$$
\log\frac{v_1^2+\sigma^2}{\sigma_1^2+\sigma^2} \leq 
\frac{v_1^2 - \sigma_1^2}{\sigma^2} \le C\sigma^{-2}N^2\sqrt{\ep}. 
$$
It remains to combine this bound with (3.6)-(3.7) and apply them in (3.3).
\end{proof}

\section{Fisher information for regularized random variables}

To compare the standardized Fisher information of $X_\sigma$ and 
$X_\sigma^*$, we need the following simple lemmas. These lemmas hold 
for both random variables $Y_{\sigma}$ and $Y^*_{\sigma}$.

\begin{lemma}\label{lem5a}
For $j=0,1,2$,
\begin{align}
&p^{(j)}_{X_{\sigma}}(x)(x^2+1)\to 0,\quad p^{(j)}_{X^*_{\sigma}}(x)(x^2+1)\to 0,
\quad\text{as}\quad x\to\pm\infty;\label{lem5a.1}\\
&\int_{-\infty}^{\infty}|p^{(j)}_{X_{\sigma}}(x)|(x^2+1)\,dx<\infty,
\quad
\int_{-\infty}^{\infty}|p^{(j)}_{X^*_{\sigma}}(x)|(x^2+1)\,dx<\infty.
\label{lem5a.2}
\end{align}
\end{lemma}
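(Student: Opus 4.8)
The plan is to exploit Gaussian smoothing. Since $X_\sigma = X+\sigma Z$ with $Z$ standard normal independent of $X$, the density is $p_{X_\sigma}(x)=\E\,\varphi_\sigma(x-X)$; because $\varphi_\sigma$ is $C^\infty$ with all derivatives bounded and super-exponentially decaying, one may differentiate under the expectation, obtaining
$$
p^{(j)}_{X_\sigma}(x)=\E\,\varphi_\sigma^{(j)}(x-X),\qquad
p^{(j)}_{X^*_\sigma}(x)=\E\,\varphi_\sigma^{(j)}(x-X^*),\qquad j=0,1,2
$$
(the interchange is legitimate since each $\varphi_\sigma^{(j)}$ is uniformly bounded and the measures are probability measures). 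Writing $\varphi_\sigma^{(j)}(u)=(-1)^j\sigma^{-j}H_j(u/\sigma)\varphi_\sigma(u)$ with $H_j$ the Hermite polynomials, we get the crude bound $|\varphi_\sigma^{(j)}(u)|\le c_j(\sigma)(1+|u|)^2 e^{-u^2/(2\sigma^2)}$ for $j\le 2$, with $c_j(\sigma)$ depending on $\sigma$ alone. Recall finally that $\E X^2=v_1^2\le 1<\infty$, while $X^*$ is bounded by $N$.

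For the decay in (\ref{lem5a.1}) I would split the expectation according to whether $|X|\le|x|/2$ or $|X|>|x|/2$. On the first event $|x-X|\ge|x|/2$, so $|\varphi_\sigma^{(j)}(x-X)|\le c_j(\sigma)(1+|x|)^2 e^{-x^2/(8\sigma^2)}$, and the product of this with $x^2+1$ still tends to $0$. On the second event I only use $|\varphi_\sigma^{(j)}|\le c_j(\sigma)$, which contributes at most $c_j(\sigma)\,\P(|X|>|x|/2)$; hence
$$
(x^2+1)\,|p^{(j)}_{X_\sigma}(x)|\ \le\ o(1)\ +\ c_j(\sigma)\,(x^2+1)\,\P(|X|>|x|/2),
$$
and the last term tends to $0$ as $x\to\pm\infty$ because $\E X^2<\infty$ forces $t^2\,\P(|X|>t)\to0$. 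For $X^*_\sigma$ the second event is empty once $|x|>2N$, so only the super-exponentially small first term remains.

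For the integrability in (\ref{lem5a.2}) I would apply Tonelli's theorem to interchange the $x$-integral with the expectation,
$$
\int_{\R}(x^2+1)\,|p^{(j)}_{X_\sigma}(x)|\,dx\ \le\ \E\int_{\R}(x^2+1)\,|\varphi_\sigma^{(j)}(x-X)|\,dx,
$$
then substitute $u=x-X$ and expand $(u+X)^2+1=u^2+2uX+X^2+1$. Each integral $\int_{\R}|u|^m|\varphi_\sigma^{(j)}(u)|\,du$, $m=0,1,2$, is a finite constant depending only on $\sigma,j$, so the inner integral is at most $C(\sigma,j)(1+|X|+X^2)$; taking expectations and using $\E X^2<\infty$ (resp.\ boundedness of $X^*$) completes the argument.

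I do not expect a genuine obstacle here: everything reduces to the elementary Gaussian bound on $\varphi_\sigma^{(j)}$, and the only point meriting a word of care is the passage $t^2\P(|X|>t)\to0$ for square-integrable $X$, together with the routine justifications of differentiation under the integral sign and of Tonelli, both immediate from that bound.
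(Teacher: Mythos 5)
Your proposal is correct and follows essentially the same route as the paper: differentiate under the integral, split according to whether $|X|\le|x|/2$ (equivalently $|x-s|\ge|x|/2$) to combine Gaussian decay with the tail bound $t^2\,\P(|X|>t)\to0$ from $\E X^2<\infty$, and use Fubini--Tonelli with the change of variables $u=x-s$ for the integrability claim. No gaps.
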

\begin{proof}
In view of the formula $p^{(j)}_{X_{\sigma}}(x)=
\int_{-\infty}^{\infty}\varphi_{\sigma}^{(j)}(x-s)\,dF(s),\,x\in\mathbb R$, 
we have the simple upper bound, for all $x\in\mathbb R$,
\begin{align}
|p^{(j)}_{X_{\sigma}}(x)|&\le\sigma^{-2}\Big(\int_{|x-s|< |x|/2}+
\int_{|x-s|\ge 
|x|/2}\Big)\Big(1+\frac{(x-s)^2}{\sigma^2}\Big)\varphi_{\sigma}(x-s)\,dF(s) \notag\\
&\le C\sigma^{-2}\Big(1-F(|x|/2)+F(-|x|/2)+
\Big(1+\frac{(|x|/2)^2}{\sigma^2}\Big)\varphi_{\sigma}(|x|/2)\Big),\quad j=0,1,2,\notag 
\end{align}
with some absolute constant $C>0$. Since $\int_{-\infty}^{\infty}s^2\,dF(s)<\infty$, 
the first relation of (\ref{lem5a.1}) follows immediately from this bound. 
We prove the second relation of (\ref{lem5a.1}) in the same way. 

Let us prove the first inequality of (\ref{lem5a.2}). We have
\begin{align}
&\int_{-\infty}^{\infty}|p^{(j)}_{X_{\sigma}}(x)|(x^2+1)\,dx\le
\int_ {-\infty}^{\infty}(x^2+1)
\bigg[\int_{-\infty}^{\infty}|\varphi_{\sigma}^{(2)}(x-s)|\,dF(s)\bigg]\,dx\notag\\
 &\le 
2\int_{-\infty}^{\infty}s^2\,dF(s) 
\int_{-\infty}^{\infty}|\varphi_{\sigma}^{(2)}(u)|\,du + 2\int_{-\infty}^{\infty}\,dF(s)
\int_{-\infty}^{\infty}(u^2+1) |\varphi_{\sigma}^{(2)}(u)|\,du <\infty.\notag
\end{align}
The second relation in (\ref{lem5a.2}) follows similarly.
\end{proof}

\begin{lemma}\label{lem5a'}
With some absolute constant $C$
\begin{align}
\int_{|x|>2N}|p''_{X_{\sigma}}(x)|(x^2+1)\,dx \leq 
C\sigma^{-2}N^2\sqrt{\ep},\,\,
\int_{|x|>2N}|p''_{X^*_{\sigma}}(x)|(x^2+1)\,dx \leq 
C\sigma^{-2}N^2\sqrt{\ep}. \notag
\end{align}
\end{lemma}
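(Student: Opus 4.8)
The plan is to prove Lemma~4.3 by reducing the weighted estimate on the tail $\{|x|>2N\}$ to the tail estimates already collected for the distribution functions. First I would recall the convolution formula $p''_{X_{\sigma}}(x)=\int_{-\infty}^{\infty}\varphi_{\sigma}''(x-s)\,dF(s)$ and split the inner integration region according to whether $|s|\le N$ or $|s|>N$. For the part with $|s|\le N$, when $|x|>2N$ we have $|x-s|>N$, so the Gaussian factor $\varphi_{\sigma}''(x-s)$ (and $(x-s)^2\varphi_\sigma(x-s)$) is exponentially small, of order $\sigma^{-3}e^{-N^2/(8\sigma^2)}$ or better; integrating the weight $x^2+1$ against such a bound over $|x|>2N$ yields a quantity bounded by $c\sigma^{-3}N^{3}e^{-N^2/(2\sigma^2)}$, which by the choice of $N=N(\ep)$ is $\le C\sigma^{-2}N^2\sqrt{\ep}$ (the super-polynomial Gaussian decay in $N$ absorbs all polynomial and $\sigma^{-1}$ losses, exactly as in Lemma~2.2 and Corollary~2.4).

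For the part with $|s|>N$, I would use $|\varphi_{\sigma}''(u)|\le C\sigma^{-3}(1+u^2/\sigma^2)\varphi_\sigma(u)$ and the elementary bounds $\int_{-\infty}^{\infty}|\varphi_{\sigma}''(u)|\,du\le C\sigma^{-2}$, $\int_{-\infty}^{\infty}u^2|\varphi_{\sigma}''(u)|\,du\le C$, together with the inequality $(x^2+1)\le 2((x-s)^2+1)+2s^2$. Fubini then reduces this piece to a constant multiple of $\sigma^{-2}\big(\int_{|s|>N}(s^2+1)\,dF(s)\big)$, and by Corollary~2.4 (plus Lemma~2.2 for the mass $\int_{|s|>N}dF(s)$) this is $\le C\sigma^{-2}N^2\sqrt{\ep}$. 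Combining the two pieces gives the claimed bound for $p''_{X_\sigma}$.

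For $p''_{X^*_{\sigma}}$ the argument is identical once we note that $dF^*$ coincides with $dF$ restricted to $[-N,N]$ plus an atom of mass $1-F(N)+F((-N)-)$ at the origin; the atom contributes $\big(1-F(N)+F((-N)-)\big)\,\varphi_{\sigma}''(x)$, and since this mass is $\le C\sqrt{\ep}$ by Lemma~2.2 while $\int_{|x|>2N}(x^2+1)|\varphi_{\sigma}''(x)|\,dx$ is exponentially small in $N$, that term is negligible. The region $|s|\le N$ in $dF^*$ is handled exactly as above (it is literally $dF$ there), so no new estimate is needed.

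The main obstacle, such as it is, is purely bookkeeping: matching the various $\sigma^{-k}$ and $N^{j}$ powers so that everything collapses into the single target bound $C\sigma^{-2}N^2\sqrt{\ep}$, and making sure the exponentially small Gaussian tail contributions are genuinely dominated by $\sqrt{\ep}$ under the standing lower bound \eqref{8.0} on $\sigma^2$ (or \eqref{2.0}). There is no analytic difficulty beyond what was already faced in Lemma~2.2, Corollary~2.4 and Lemma~3.3; the weight $x^2+1$ is handled by the same "subtract the peak, split the mass" device, and the only genuinely new ingredient is the second-derivative kernel $\varphi_\sigma''$, whose $L^1$ and second-moment norms are elementary Gaussian computations.
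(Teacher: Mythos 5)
Your proposal is correct and follows essentially the same route as the paper: the convolution formula for $p''_{X_\sigma}$, Fubini, the split of the $s$-integration at $|s|=N$, the elementary $L^1$ and second-moment bounds for $\varphi_\sigma''$ together with the Gaussian tail estimate $\int_{|u|>N}(u^2+1)|\varphi_\sigma^{(2)}(u)|\,du\le CN^3\sigma^{-5}e^{-N^2/(2\sigma^2)}$, and Corollary~2.4 for the dominant term $\sigma^{-2}\int_{|s|>N}s^2\,dF(s)\le C\sigma^{-2}N^2\sqrt{\ep}$. Your explicit handling of the atom at the origin for $F^*$ is a detail the paper leaves to "the same way," but it is not a different argument.
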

\begin{proof}

Note that
\begin{align}
&\int_{|x|>2N} |p''_{X_{\sigma}}(x)|(x^2+1)\,dx 
\le
\int_{-\infty}^{\infty} dF(s)
\int_{|x|>2N}(x^2+1)|\varphi_{\sigma}^{(2)}(x-s)|\,dx \notag\\
 &\le 
2\int_{-N}^{N}s^2\,dF(s) \int_{|u|>N}|\varphi_{\sigma}^{(2)}(u)|\,du + 
2\int_{|s|>N}s^2\,dF(s)
\int_{-\infty}^{\infty} |\varphi_{\sigma}^{(2)}(u)|\,du \notag\\
&+2\int_{-N}^{N}\,dF(s)\int_{|u|>N}(u^2+1)|\varphi_{\sigma}^{(2)}(u)|\,du +
2\int_{|s|>N}\,dF(s)\int_{-\infty}^{\infty}
(u^2+1)|\varphi_{\sigma}^{(2)}(u)|\,du.\label{lem5a'.1}
\end{align}
Since 
\begin{equation}
|\varphi_{\sigma}^{(2)}(u)|\le 
\frac 1{\sigma^2}\Big(1+\frac{u^2}{\sigma^2}\Big)\varphi_{\sigma}(u),
\quad u\in\mathbb R,\notag
\end{equation}
we get the following estimates
\begin{align}
 &\int_{|u|>N}|\varphi_{\sigma}^{(2)}(u)|\,du\le 
\frac 1{\sigma^2}\int_{|u|>N}\Big(1+\frac{u^2}{\sigma^2}\Big)\varphi_{\sigma}(u)\,du
 \le C\frac N{\sigma^3}e^{-N^2/(2\sigma^2)}\le \frac C{\sigma^2}\varepsilon,\notag\\
 &\int_{|u|>N}(u^2+1)|\varphi_{\sigma}^{(2)}(u)|\,du\le 
\frac 1{\sigma^2}\int_{|u|>N}\Big(1+\frac{u^2}{\sigma^2}\Big)^2\varphi_{\sigma}(u)\,du
 \le C\frac{N^3}{\sigma^5}e^{-N^2/(2\sigma^2)}\le \frac C{\sigma^2}N^2\varepsilon.\notag
\end{align}
Applying these upper bounds and Collorary~2.4 to (\ref{lem5a'.1}) we easily obtain
the first assertion of the lemma. We may prove the second assertion of the 
lemma in the same way.
\end{proof}

The next representations are well-known; they are obtained, using Lemma~\ref{lem5a} and the bound (\ref{lem4.1}), via integration 
by parts in the integral which defines the Fisher information.

\begin{lemma}\label{lem5b}
The following formulas hold
\begin{align}
I(X_{\sigma})=-\int_{-\infty}^{\infty}
p''_{X_{\sigma} }(x)\log p_{X_{\sigma} }(x)\,dx,\quad
I(X^*_{\sigma} )=-\int_{-\infty}^{\infty}
p''_{X^*_{\sigma }}(x)\log p_{X^*_{\sigma}}(x)\,dx.
\end{align}
\end{lemma}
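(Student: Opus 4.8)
The plan is to establish the two integral representations
$$
I(X_{\sigma})=-\int_{-\infty}^{\infty}p''_{X_{\sigma}}(x)\log p_{X_{\sigma}}(x)\,dx
$$
(and the analogous one for $X^*_{\sigma}$) by integrating by parts twice in the classical formula
$$
I(X_{\sigma})=\int_{-\infty}^{\infty}\frac{p'_{X_{\sigma}}(x)^2}{p_{X_{\sigma}}(x)}\,dx
=\int_{-\infty}^{\infty}p'_{X_{\sigma}}(x)\,\big(\log p_{X_{\sigma}}(x)\big)'\,dx.
$$
Since $p_{X_{\sigma}}$ is everywhere positive and smooth (being a Gaussian convolution), the logarithmic derivative $\big(\log p_{X_{\sigma}}\big)'=p'_{X_{\sigma}}/p_{X_{\sigma}}$ is well defined, so the only issue is to justify discarding the boundary terms at $\pm\infty$ and to verify absolute convergence of all integrals involved. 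I would carry out the integration by parts on a finite interval $[-R,R]$ and then let $R\to\infty$:
$$
\int_{-R}^{R}p'_{X_{\sigma}}\,(\log p_{X_{\sigma}})'\,dx
=\Big[p'_{X_{\sigma}}\log p_{X_{\sigma}}\Big]_{-R}^{R}
-\int_{-R}^{R}p''_{X_{\sigma}}\log p_{X_{\sigma}}\,dx.
$$

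The boundary term is controlled as follows. By Lemma~\ref{lem5a} with $j=1$ we have $p'_{X_{\sigma}}(x)(x^2+1)\to 0$ as $x\to\pm\infty$, in particular $p'_{X_{\sigma}}(x)\to 0$; and by the bound (\ref{lem4.1}) (which, as noted there, applies verbatim to $p_{X_{\sigma}}$) we have $|\log p_{X_{\sigma}}(x)|\le C\sigma^{-2}(x^2+4)$. Hence
$$
\big|p'_{X_{\sigma}}(x)\log p_{X_{\sigma}}(x)\big|\le C\sigma^{-2}\,|p'_{X_{\sigma}}(x)|\,(x^2+4)\longrightarrow 0
$$
as $x\to\pm\infty$, so the boundary term vanishes in the limit. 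For the convergence of the right-hand integral, the same two ingredients give
$$
\big|p''_{X_{\sigma}}(x)\log p_{X_{\sigma}}(x)\big|\le C\sigma^{-2}\,|p''_{X_{\sigma}}(x)|\,(x^2+4),
$$
and the right-hand side is integrable by the $j=2$ case of (\ref{lem5a.2}) in Lemma~\ref{lem5a}. Thus the limit exists and equals $-\int_{-\infty}^{\infty}p''_{X_{\sigma}}\log p_{X_{\sigma}}\,dx$, which is precisely the asserted formula. One also needs to observe, before integrating by parts, that $\int\frac{p'^2}{p}\,dx=\int p'\,(\log p)'\,dx$ converges (possibly to $+\infty$); if it is infinite, then since the right-hand side is finite one must argue slightly more carefully — but in fact the representation just derived shows a posteriori that $I(X_{\sigma})<\infty$, so one can run the finite-interval computation and pass to the limit on both sides simultaneously, the left side increasing to $I(X_{\sigma})$ by monotone convergence and the right side converging by dominated convergence.

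The computation for $X^*_{\sigma}$ is identical, using the $X^*_{\sigma}$-halves of Lemma~\ref{lem5a} together with the lower bound in (\ref{lem4.1'}) which yields the same quadratic control $|\log p_{X^*_{\sigma}}(x)|\le C\sigma^{-2}(x^2+4)$. I do not expect a genuine obstacle here: the lemma is explicitly labelled ``well-known'' and the only care needed is the routine bookkeeping of boundary terms, for which all required decay and integrability statements have been prepared in Lemmas~\ref{lem5a} and~\ref{lem5a'} and in the logarithmic bounds (\ref{lem4.1}) and (\ref{lem4.1'}). The mildly delicate point — more a matter of exposition than of difficulty — is to phrase the passage to the limit so that it is valid whether or not $I(X_{\sigma})$ is a priori known to be finite; packaging the two sides of the finite-interval identity and taking the limit together, as indicated above, handles this cleanly.
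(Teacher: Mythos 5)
Your argument is correct and is exactly the route the paper indicates (it gives no details, merely stating that the formulas follow "via integration by parts" using Lemma~4.1 and the bound (3.5)): a single integration by parts on $[-R,R]$, with the boundary term killed by the decay in (\ref{lem5a.1}) combined with the quadratic bound on $|\log p_{X_\sigma}|$, and the limit integral absolutely convergent by the $j=2$ case of (\ref{lem5a.2}). The only quibble is the phrase ``integrating by parts twice'' in your opening sentence — your computation (correctly) uses only one integration by parts.
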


We are now prepared to prove the following bound.

\begin{lemma}\label{lem5c}
With some absolute constant $C>0$ we have
\begin{align}
(v_1^2+\sigma^2)^{-1}J_{st}(X_{\sigma}) \, \le \, 
(\sigma_1^2+\sigma^2)^{-1} J_{st}(X^*_{\sigma})
+ C\sigma^{-7} N^3\sqrt{\ep}.\notag
\end{align}  
\end{lemma}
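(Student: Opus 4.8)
The plan is to mimic the structure of the proof of Lemma~\ref{lem4}, but now working with the Fisher information $I$ in place of the Shannon entropy $h$, and using the integration-by-parts representations of Lemma~\ref{lem5b}. First I would recall that $J_{st}(X_\sigma) = (v_1^2+\sigma^2)\,(I(X_\sigma)-I(Z_{v_1}))$ and $J_{st}(X^*_\sigma) = (\sigma_1^2+\sigma^2)\,(I(X^*_\sigma)-I(Z_{\sigma_1}))$, where the subtracted terms are the Fisher informations of Gaussians with the matching variances, namely $1/(v_1^2+\sigma^2)$ and $1/(\sigma_1^2+\sigma^2)$ respectively. Dividing through by the variances, the claimed inequality is equivalent to
\begin{equation}\notag
I(X_\sigma) - I(X^*_\sigma) \le \frac{1}{\sigma_1^2+\sigma^2} - \frac{1}{v_1^2+\sigma^2} + C\sigma^{-7}N^3\sqrt{\ep}.
\end{equation}
Since $0 \le v_1^2 - \sigma_1^2 \le CN^2\sqrt{\ep}$ by Lemma~\ref{lem1} (applied together with Corollary~2.4, which isolates the individual variance defect), the difference of the two Gaussian terms is nonnegative and bounded by $\sigma^{-4}(v_1^2-\sigma_1^2) \le C\sigma^{-4}N^2\sqrt{\ep}$, which is absorbed into the error term. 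So the real task is to bound $I(X_\sigma) - I(X^*_\sigma)$ from above by $C\sigma^{-7}N^3\sqrt{\ep}$.

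For that, I would use Lemma~\ref{lem5b} to write
\begin{equation}\notag
I(X_\sigma) - I(X^*_\sigma) = -\int_{-\infty}^\infty \big(p''_{X_\sigma}(x) - p''_{X^*_\sigma}(x)\big)\log p_{X_\sigma}(x)\,dx \;-\; \int_{-\infty}^\infty p''_{X^*_\sigma}(x)\,\log\frac{p_{X_\sigma}(x)}{p_{X^*_\sigma}(x)}\,dx,
\end{equation}
exactly paralleling the decomposition $I_1 + I_2$ in Lemma~\ref{lem4}. Each integral is then split over $|x|\le 2N$ and $|x|>2N$. On $|x|\le 2N$: for the first integral I apply Lemma~\ref{lem3} with $k=2$, giving $|p''_{X_\sigma}-p''_{X^*_\sigma}| \le C\sigma^{-5}\sqrt{\ep}$, together with the logarithmic bound $|\log p_{X_\sigma}(x)| \le C\sigma^{-2}(x^2+4)$ from (\ref{lem4.1}); integrating $(x^2+4)$ over $|x|\le 2N$ costs a factor $\sim N^3$, yielding a bound $C\sigma^{-7}N^3\sqrt{\ep}$ — this is the term that produces the stated power $\sigma^{-7}$. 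For the second integral on $|x|\le 2N$, I would write $\log(p_{X_\sigma}/p_{X^*_\sigma}) = \log(1 + (p_{X_\sigma}-p_{X^*_\sigma})/p_{X^*_\sigma})$ and use $|\log(1+t)|\le |t|/(1+\min(t,0))$ together with the two-sided bound (\ref{lem4.1'}) on $p_{X^*_\sigma}$: on $|x|\le 2N$ one has $p_{X^*_\sigma}(x) \ge c\sigma^{-1}e^{-(2N+2)^2/(2\sigma^2)}$, so dividing by it is controlled, and one multiplies by $|p''_{X^*_\sigma}(x)|$, whose integral over the whole line is finite by Lemma~\ref{lem5a}; this again gives a bound of the form $C\sigma^{-O(1)}N^{O(1)}e^{CN^2/\sigma^2}\sqrt{\ep}$, but one checks that by the definition of $N=N(\ep)$ the exponential factor is a fixed power of $1/\ep$ only through $e^{-N^2/(2\sigma^2)}\sim\ep$, so the net bound is still $\le C\sigma^{-7}N^3\sqrt{\ep}$.

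On the tail $|x|>2N$, I would use Lemma~\ref{lem5a'} to bound $\int_{|x|>2N}|p''_{X_\sigma}(x)|(x^2+1)\,dx$ and $\int_{|x|>2N}|p''_{X^*_\sigma}(x)|(x^2+1)\,dx$ by $C\sigma^{-2}N^2\sqrt{\ep}$, and combine these with the logarithmic growth bound $|\log p_{X_\sigma}(x)|\le C\sigma^{-2}(x^2+4)$; since $x^2+4 \le C(x^2+1)$ for $|x|>2N\ge 2$, the tail contributions to both integrals are $\le C\sigma^{-4}N^2\sqrt{\ep}$, which is dominated by the main term. Collecting all four pieces gives $I(X_\sigma)-I(X^*_\sigma)\le C\sigma^{-7}N^3\sqrt{\ep}$, and adding back the Gaussian-term difference completes the proof. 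The main obstacle is the bookkeeping of the exponentially small lower bound on $p_{X^*_\sigma}$ in the region $|x|\le 2N$: one must verify that the factor $e^{(2N+2)^2/(2\sigma^2)}$ arising from $1/p_{X^*_\sigma}$, when multiplied by the $\sqrt{\ep}$ coming from Lemma~\ref{lem3}, still produces a quantity that tends to zero — this works precisely because Lemma~\ref{lem3} supplies a full factor $\sqrt{\ep}$ while the relevant exponential is only $e^{O(N^2/\sigma^2)} = \ep^{-O(1)}$ with a small constant, once the constraint (\ref{8.0}) on $\sigma^2$ relative to $\log(1/\ep)$ is taken into account; this is the one place where that normalization is genuinely used.
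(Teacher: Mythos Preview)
Your overall decomposition and the treatment of the $J_1$-integral, of both tail pieces, and of the Gaussian remainder all match the paper and are correct. The gap is in your handling of the inner piece of the second integral,
\[
J_{2,1}=\int_{|x|\le 2N} p''_{X^*_\sigma}(x)\,\log\frac{p_{X_\sigma}(x)}{p_{X^*_\sigma}(x)}\,dx.
\]
Your plan is to bound $|\log(p_{X_\sigma}/p_{X^*_\sigma})|$ by $|p_{X_\sigma}-p_{X^*_\sigma}|/p_{X^*_\sigma}$ and then invoke the pointwise lower bound $p_{X^*_\sigma}(x)\ge c\sigma^{-1}e^{-(|x|+2)^2/(2\sigma^2)}$ from~(\ref{lem4.1'}). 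This produces a factor $e^{(2N+2)^2/(2\sigma^2)}$, and your assertion that this is only a fixed power of $1/\ep$ that is beaten by $\sqrt{\ep}$ is incorrect: since $N^2\sim 2(1+2\sigma^2)\log(1/\ep)$, one has $(2N+2)^2/(2\sigma^2)\ge 2N^2/\sigma^2\gtrsim (4/\sigma^2)\log(1/\ep)$, so $e^{(2N+2)^2/(2\sigma^2)}\sqrt{\ep}\gtrsim \ep^{\,1/2-4/\sigma^2}$, which for any $\sigma\le 1$ blows up as $\ep\to 0$. The constraint~(\ref{8.0}) does not help here; it only forces $\sigma^2\gtrsim \log\log(1/\ep)/(\log(1/\ep))^{1/3}$, which makes the exponent even worse. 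In short, dividing by the crude lower bound on $p_{X^*_\sigma}$ over the whole range $|x|\le 2N$ cannot work.

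The paper avoids this by a different mechanism. First it proves the pointwise comparison
\[
|p''_{X^*_\sigma}(x)|\le \sigma^{-2}(1+9\sigma^{-2}N^2)\,p_{X^*_\sigma}(x),\qquad |x|\le 2N,
\]
obtained from $p''_{X^*_\sigma}(x)=-\sigma^{-2}p_{X^*_\sigma}(x)+\sigma^{-4}\int(x-s)^2\varphi_\sigma(x-s)\,dF^*(s)$ and $|x-s|\le 3N$. This turns $J_{2,1}$ into an integral of $p_{X^*_\sigma}(x)\,|\log(p_{X_\sigma}/p_{X^*_\sigma})|$, and the extra factor of $p_{X^*_\sigma}$ absorbs the dangerous $1/p_{X^*_\sigma}$ before it ever appears. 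The region is then split into $E_+=\{p_{X_\sigma}>p_{X^*_\sigma}\}$ and $E_-$. On $E_+$ one uses $\log(1+t)\le t$ to get $\int_{E_+}\le \int|p_{X_\sigma}-p_{X^*_\sigma}|\,dx\le C\sigma^{-1}N\sqrt{\ep}$. On $E_-$ the paper exploits the specific structure of the truncation: since $F^*$ differs from $F$ only by moving the mass of $\{|X|>N\}$ to an atom at $0$, one checks that for $|x|\le 2N$ the atom contributes at least ten times more to $p_{X^*_\sigma}$ than the excised tail contributed to $p_{X_\sigma}$, yielding the ratio bound $p_{X^*_\sigma}/p_{X_\sigma}\le 2$ on $E_-$, after which the same $\log(1+t)\le t$ argument applies. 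The net bound is $|J_{2,1}|\le C\sigma^{-5}N^3\sqrt{\ep}$, with no exponential loss.
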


\begin{proof}
Write
\begin{align} 
 & \frac{J_{st}(X_{\sigma})}{v_1^2+\sigma^2} -
\frac{J_{st}(X^*_{\sigma})}{\sigma_1^2+\sigma^2} = 
-\int_{-\infty}^\infty p_{X_{\sigma}}''(x)\log p_{X_{\sigma}}(x)\,dx
+\int_{-\infty}^\infty p_{X^*_{\sigma}}''(x)\log p_{X^*_{\sigma}}(x)\,dx\notag\\
 & +
\frac{v_1^2-\sigma_1^2}{(v_1^2+\sigma^2)(\sigma_1^2+\sigma^2)} =
-\int_{-\infty}^\infty (p_{X_{\sigma}}''(x)-p_{X^*_{\sigma}}''(x))
\log p_{X_{\sigma}}(x)\,dx\notag\\
 & -
\int_{-\infty}^\infty p_{X^*_{\sigma}}''(x)
\log\frac{p_{X_{\sigma} }(x)}{p_{X^*_{\sigma}}(x)}\,dx +
\frac{v_1^2-\sigma_1^2}{(v_1^2+\sigma^2)(\sigma_1^2+\sigma^2)}.
\label{lem5c.0}
\end{align} 
Splitting integration in the first integral on the right-hand side of (\ref{lem5c.0}),
\begin{align} 
J_1 & =
\int_{-\infty}^\infty 
(p''_{X_{\sigma}}(x) - p''_{X^*_{\sigma}}(x))\log p_{X_{\sigma}}(x) \,dx = J_{1,1}+J_{1,2}
\notag\\
 & =
\Big(\int_{|x|\le 2N} + \int_{|x|>2N}\Big)
(p''_{X_{\sigma}}(x)-p''_{X^*_{\sigma}}(x))\log p_{X_{\sigma}}(x) \,dx,
\notag
\end{align}
we now estimate the integrals $J_{1,1}$ and $J_{1,2}$. 

By Lemma~\ref{lem3} and (\ref{lem4.1}),
\begin{equation}\label{lem5c.1} 
|J_{1,1}|\le C'\sigma^{-7}N^3\sqrt{\varepsilon}
\end{equation}
with some absolute constant $C'$, while,
by (\ref{lem4.1}) and Lemma~\ref{lem5a'},
\begin{align} 
|J_{1,2}| \le C'\sigma^{-4}N^2\sqrt{\varepsilon}.\label{lem5c.2}  
\end{align}
Now consider the integral
\begin{align}
J_2 & = 
\int_{-\infty}^\infty p''_{X^*_{\sigma}}(x)
\log\frac{p_{X_{\sigma }}(x)}{p_{X^*_{\sigma}}(x)}\,dx =
 J_{2,1}+J_{2,2}\notag\\
 & =
\Big(\int_{|x|\le 2N}+\int_{|x|>2N}\Big) \ p''_{X^*_{\sigma}}(x)
\log\frac{p_{X_{\sigma}}(x)}{p_{X^*_{\sigma}}(x)}\,dx.\notag
\end{align}
It is easy to verify that
\begin{equation}
p''_{X^*_{\sigma}}(x)=-\sigma^{-2}p_{X^*_{\sigma}}(x) +
\sigma^{-4}\int_{-\infty}^\infty (x-s)^2\varphi_{\sigma}(x-s)dF^*(s).
\notag
\end{equation}
Therefore, for $x\in[-2N,2N]$,
\begin{equation}
|p''_{X^*_{\sigma}}(x)| \, \le \, 
\sigma^{-2}(1+9\sigma^{-2}N^2)\, p_{X^*_{\sigma}}(x),\notag
\end{equation}
which leads to the upper bound 
\begin{align}
|J_{2,1}|\le \sigma^{-2} (1+9\sigma^{-2} N^2)
\Big(\int_{E_-} p_{X^*_{\sigma}}(x)
\log\frac{p_{X^*_{\sigma}}(x)}{p_{X_{\sigma}}(x)}\,dx +
\int_{E_+} p_{X^*_{\sigma}}(x)
\log\frac{p_{X_{\sigma}}(x)}{p_{X^*_{\sigma}}(x)}\,dx\Big),\label{lem5c.3}
\end{align}
where $E_+ = \{x\in[-2N,2N]: p_{X_{\sigma}}(x)>p_{X^*_{\sigma}}(x)\}$ and 
$E_- = [-2N,2N]\setminus E_+$. As in the proof of the estimate on $I_{2,1}$ 
in the previous section, we obtain
\begin{equation}
\int_{E_+}p_{X^*_{\sigma}}(x)
\log\frac{p_{X_{\sigma}}(x)}{p_{X^*_{\sigma}}(x)}\,dx \le 
4C\sigma^{-1}N\sqrt{\ep}.
\label{lem5c.4}
\end{equation}
From (\ref{lem4.1'}) and from the definition of $p_{X_{\sigma}}(x)$ and 
$p_{X^*_{\sigma}}(x)$, we see that, for $x\in E_-$,
\begin{equation}
\frac{1}{10}\, p_{X^*_{\sigma}}(x)\ge 
\big(1-F(N)+F((-N)-)\big)\,\varphi_{\sigma}(x) \ge \int_{|s|\ge N}
\varphi_{\sigma}(x-s)\,dF(s)\notag
\end{equation}
and we have $p_{X^*_{\sigma}}(x)/p_{X_{\sigma}}(x)\le 2$ for $x\in E_-$. 
Therefore, as above, we get
\begin{equation}
\int_{E_-} p_{X^*_{\sigma}}(x)
\log\frac{p_{X^*_{\sigma}}(x)}{p_{X_{\sigma}}(x)}\,dx \le
2\int_{E_-} p_{X_{\sigma}}(x) 
\log\frac{p_{X^*_{\sigma}}(x)}{p_{X_{\sigma}}(x)}\,dx \le 
8C\sigma^{-1} N\sqrt{\ep}.
\label{lem5c.5}
\end{equation}
Applying (\ref{lem5c.4}) and (\ref{lem5c.5}) to (\ref{lem5c.3}), 
we finally obtain
\begin{equation}
|J_{2,1}|\le C'\sigma^{-5} N^3\sqrt{\ep}\label{lem5c.6}
\end{equation}
with some absolute constant $C'$.

It remains to estimate $J_{2,2}$. We have, using (\ref{lem4.1}) and 
Lemma~\ref{lem5a'},
\begin{equation}
|J_{2,2}|\le C\int_{|x|>2N} p''_{X^*_{\sigma }}(x) (x^2+c^2)\,
\sigma^{-2}\,dx\le C''\sigma^{-4}N^2\sqrt{\varepsilon}  \label{lem5c.7} 
\end{equation}
with some absolute constant $C''$. Applying  Lemma~\ref{lem1},
(\ref{lem5c.1})--(\ref{lem5c.2}) and (\ref{lem5c.6})--(\ref{lem5c.7}) in the 
representation (\ref{lem5c.0}), we arrive at the assertion of the lemma.
\end{proof}

\section{Characteristic functions of truncated random variables}

Denote by $f_{X^*}(t)$ and $f_{Y^*}(t)$ the characteristic functions of 
the random variables $X^*$ and $Y^*$, respectively. As integrals over 
finite intervals they admit analytic continuations as entire functions 
to the whole complex plane $\mathbb C$. These continuations will be denoted
by $f_{X^*}(t)$ and $f_{Y^*}(t)$, ($t\in\mathbb C$). 

Put $T = \frac 1{64}\,N = \frac 1{64}\,
\sqrt{1+2\sigma^2}\,\big(1+\sqrt{2\log\frac{1}{\ep}}\,\big)$. We assume 
without loss of generality that $0 < \ep \le \ep_0$, where $\ep_0$ is 
a sufficiently small absolute constant.

\begin{lemma}\label{lem5d}
For all $t\in\mathbb C,\,|t|\le T$,
\begin{equation}
\frac{1}{2}\,|e^{-t^2/2}|\le |f_{X^*}(t)|\,|f_{Y^*}(t)|\le
\frac{3}{2}\,|e^{-t^2/2}|. \label{lem5d.1}
\end{equation} 
\end{lemma}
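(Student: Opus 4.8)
The plan is to derive the estimate \eqref{lem5d.1} from the Kolmogorov-distance bound \eqref{2.2} together with Lemma~\ref{lem2}, which together force the product $f_{X^*}(t)\,f_{Y^*}(t)$ to be close to $e^{-t^2/2}$ for real $t$ in a large window, and then to propagate this closeness into the complex disk $|t|\le T$ by an analyticity argument. First I would record that, by Lemma~\ref{lem2}, the distribution function $F^*_\sigma * G^*_\sigma$ is within $C\sqrt\ep$ of $\Phi_{\sqrt{1+2\sigma^2}}$ in total variation, hence certainly in the Kolmogorov distance; passing to characteristic functions (e.g. via integration by parts, using that both distributions are supported-then-smoothed with rapidly decaying tails, cf. Corollary~2.4), this gives a bound of the form
$$
\bigl|\,f_{X^*_\sigma}(t)\,f_{Y^*_\sigma}(t) - e^{-(1+2\sigma^2)t^2/2}\,\bigr|\le C(1+|t|)\sqrt\ep,\qquad t\in\R .
$$
Dividing out the Gaussian factor $e^{-\sigma^2 t^2}$ coming from the regularizing noise $\sigma Z,\sigma Z'$ (so that $f_{X^*_\sigma}=f_{X^*}e^{-\sigma^2t^2/2}$, and similarly for $Y^*$), this yields
$$
\bigl|\,f_{X^*}(t)\,f_{Y^*}(t) - e^{-t^2/2}\,\bigr|\le C(1+|t|)\,e^{\sigma^2 t^2}\sqrt\ep ,\qquad t\in\R .
$$

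The next step is to control this on the real segment $|t|\le 2T$ (a little larger than the target radius). Since $T=\frac1{64}N$ and $N\asymp\sqrt{1+2\sigma^2}\,\sqrt{\log(1/\ep)}$, on this segment $\sigma^2 t^2\le \sigma^2(2T)^2\le \frac{1}{512}(1+2\sigma^2)\log(1/\ep)\le \frac{3}{512}\log(1/\ep)$, so the prefactor $e^{\sigma^2 t^2}\sqrt\ep$ is at most $\ep^{1/2-3/512}$, which is $\le \ep^{1/3}$, say, and in particular tends to $0$. Meanwhile $|e^{-t^2/2}|$ can be as small as $e^{-2T^2}$; since $T^2\le \frac{1}{64^2}(1+2\sigma^2)\bigl(1+\sqrt{2\log(1/\ep)}\bigr)^2\le C\log(1/\ep)$, we have $e^{-2T^2}\ge \ep^{C'}$ for an absolute $C'$, which is \emph{much} larger than the error $\ep^{1/3}$ once $C'<1/3$ — and this is exactly the point of choosing the constant $\frac1{64}$ so small. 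Hence on the real segment $|t|\le 2T$ the multiplicative error is negligible and \eqref{lem5d.1} holds there directly.

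To reach complex $t$ with $|t|\le T$, I would apply a Phragmén–Lindelöf / maximum-modulus type comparison for the entire function $g(t)=f_{X^*}(t)f_{Y^*}(t)-e^{-t^2/2}$. On the disk $|t|\le 2T$ one has a crude a priori bound: $|f_{X^*}(t)|\le \E\,e^{N|\mathrm{Im}\,t|}\le e^{2NT}$ and similarly for $f_{Y^*}$, while $|e^{-t^2/2}|\le e^{2T^2}$, so $|g(t)|\le e^{CNT}\le e^{C'\log(1/\ep)}=\ep^{-C'}$ on $|t|=2T$. Combining the smallness $\ep^{1/3}e^{-\text{(small)}}$ on the real diameter with the polynomial-in-$1/\ep$ bound on the full circle, a two-constants theorem on the half-disks (harmonic measure estimate) gives $|g(t)|\le \ep^{\gamma}$ on $|t|\le T$ for some absolute $\gamma>0$; dividing by $|e^{-t^2/2}|\ge e^{-T^2}\ge \ep^{C''}$ with $C''<\gamma$ then yields $|f_{X^*}(t)f_{Y^*}(t)e^{t^2/2}-1|\le \ep^{\gamma-C''}\le \frac12$ for $\ep\le\ep_0$, which is \eqref{lem5d.1}.

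\textbf{Main obstacle.} The delicate point is the bookkeeping of exponents: one must verify that the harmonic-measure weight assigned to the real diameter (where $g$ is tiny) beats, after the maximum-modulus interpolation, the loss incurred by (i) the exponential-type growth $e^{CNT}$ of the entire functions on the imaginary direction and (ii) the division by the small Gaussian $e^{-T^2}$. This is precisely why $T$ is taken to be the \emph{small} multiple $\frac1{64}N$ rather than $N$ itself: it keeps $e^{-T^2}\ge \ep^{C'}$ with $C'$ small enough that everything closes. I would be careful to track all absolute constants through the Phragmén–Lindelöf step and confirm the final exponent is strictly positive; the rest (the characteristic-function version of Lemma~\ref{lem2}, the tail bounds from Corollary~2.4) is routine.
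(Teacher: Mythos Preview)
Your route via two-constants/Phragm\'en--Lindel\"of is conceptually reasonable but, as written, does not close. The harmonic measure of the diameter $[-2T,2T]$ from the point $iT$ in the upper half-disk of radius $2T$ is (after the conformal map $z\mapsto 2z/(1+z^2)$ to the upper half-plane, sending $i/2\mapsto 4i/3$) equal to $\tfrac{2}{\pi}\arctan(3/4)\approx 0.41$. With your crude bound $|g|\le e^{4NT}\approx \ep^{-3/8}$ on the semicircle (worst case $\sigma=1$) and $|g|\lesssim\ep^{1/2}$ on the diameter, the interpolated exponent at $iT$ is $0.41\cdot\tfrac12-0.59\cdot\tfrac38\approx -0.02<0$, so you get no decay at all. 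The fix is to enlarge the outer radius (e.g.\ to $4T$ or $8T$), which pushes the harmonic measure of the diameter close enough to $1$ that the interpolation wins; but you would have to redo the bookkeeping, and you explicitly flagged this as the unresolved ``main obstacle.''

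The paper's proof sidesteps the whole issue by working directly with complex $t$ from the outset. The point is that $F^**G^*$ is supported on $[-2N,2N]$, so the standard integration-by-parts bound
\[
\Big|\int_{-4N}^{4N} e^{itx}\,d\big(F^*_\sigma*G^*_\sigma-\Phi_{\sqrt{1+2\sigma^2}}\big)(x)\Big|
\ \le\ C(1+|t|)\sqrt\ep\,e^{4N|\mathrm{Im}\,t|}
\]
holds \emph{for complex} $t$; the only new feature compared to the real case is the factor $e^{4N|\mathrm{Im}\,t|}\le e^{4NT}=e^{N^2/16}$. Since $T=N/64$ was chosen precisely so that $N^2/16$ is still well below $\tfrac12\log(1/\ep)$, one gets $\sqrt\ep\,e^{4NT}\le \ep^{1/8}$ directly, which is already far smaller than $e^{-(1/2+\sigma^2)T^2}$. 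Two easy tail estimates (the contribution of $|x|\ge 4N$ to each of the two Fourier integrals) complete the argument. No analytic interpolation is needed, and the constants are transparent.

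So: your approach is different, more elaborate, and in its present form has a genuine numerical gap at the interpolation step; the paper's approach is an elementary one-line upgrade of the real-$t$ argument to complex $t$, made possible by the compact support of the truncated distributions.
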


\begin{proof}
For all complex $t$,
\begin{align}
 \Big|\int_{-\infty}^\infty e^{itx}\,d(F^*_{\sigma}*G^*_{\sigma})&(x) -
\int_{-\infty}^\infty e^{itx}\,d\Phi_{\sqrt{1+2\sigma^2}}(x)\Big| \le
\Big|\int_{-4N}^{4N}
e^{itx}\,d(F^*_{\sigma}*G^*_{\sigma}-\Phi_{\sqrt{1+2\sigma^2}})(x)\Big|\notag\\
 & +
\int_{|x|\ge 4N}e^{-x\Im(t)}\,d(F^*_{\sigma}*G^*_{\sigma})(x)+\int_{|x|\ge 4N}
e^{-x\Im(t)}\,\varphi_{\sqrt{1+2\sigma^2}}(x)\,dx.\label{lem5d.2}
\end{align}
Integrating by parts, we have
\begin{align}
& \int_{-4N}^{4N}
e^{itx}\,d(F^*_{\sigma}*G^*_{\sigma}-\Phi_{\sqrt{1+2\sigma^2}})(x) \ = \ 
e^{4itN}(F^*_{\sigma}*G^*_{\sigma} -\Phi_{\sqrt{1+2\sigma^2}})(4N)\notag\\
&-e^{-4itN} (F^*_{\sigma}*G^*_{\sigma}-
\Phi_{\sqrt{1+2\sigma^2}})(-4N)-it\int_{-4N}^{4N}(F^*_{\sigma}*G^*_{\sigma}-
\Phi_{\sqrt{1+2\sigma^2}})(x)\,e^{itx}\,dx.\notag
\end{align}
In view of the choice of $T$ and $N$, we easily obtain, using Lemma~\ref{lem2}, 
for all $|t|\le T$, 
\begin{equation}\label{lem5d.3}
\big|\int_{-4N}^{4N}e^{itx}\,
d(F^*_{\sigma}*G^*_{\sigma}-\Phi_{\sqrt{1+2\sigma^2}})(x)\Big| \le
2C\sqrt{\ep}\,e^{4N|\Im(t)|}+8C|t|\sqrt{\ep}\,
e^{4N\,|\Im(t)|}\le \frac{1}{6}\, e^{-(1/2+\sigma^2)\,T^2}.
\end{equation}
The second integral on the right-hand side of (\ref{lem5d.2}) does not 
exceed, for $|t|\le T$,
\begin{align}
 & \int_{-2N}^{2N} d(F^* * G^*)(s) \int_{|x|\ge 4N}
e^{-x\,\Im(t)}\varphi_{\sqrt 2\sigma}(x-s)\,dx\notag\\
 & \le\int_{-2N}^{2N} e^{-s\,\Im(t)}\,d(F^**G^*)(s)\cdot
 \int_{|u|\ge 2N} e^{-u\,\Im(t)}\varphi_{\sqrt 2\sigma}(u)\,du\notag\\
 & \le \, e^{2NT}\cdot\frac 1{\sqrt{\pi}}\int_{2N/\sigma}^{\infty}
 e^{\sigma Tu-u^2/4}\,du \ \le \ \frac{1}{6}\, e^{-(1/2+\sigma^2) T^2}.
\label{lem5d.4}
\end{align}
The third integral on the right-hand side of (\ref{lem5d.2}) does not 
exceed, for $|t|\le T$,
\begin{equation}\label{lem5d.5}
\sqrt{\frac2{\pi}}\int_{4N}^{\infty}e^{Tu-u^2/6}\,du
 \, \le \, \frac{1}{6}\, e^{-(1/2+\sigma^2)\,T^2}.
\end{equation}
Applying (\ref{lem5d.3})-(\ref{lem5d.5}) in (\ref{lem5d.2}), we arrive 
at the upper bound
\begin{equation}
|e^{-\sigma^2t^2/2} f_{X^*}(t) e^{-\sigma^2t^2/2} f_{Y^*}(t) -
e^{-(1/2+\sigma^2)t^2}| \le 
\frac{1}{2}\,e^{-(1/2+\sigma^2)T^2}\le 
\frac{1}{2}\,|e^{-(1/2+\sigma^2)t^2}|
\end{equation}
from which (\ref{lem5d.1}) follows.
\end{proof}

The bounds in (\ref{lem5d.1}) show that the characteristic function 
$f_{X^*}(t)$ does not vanish in the circle $|t|\le T$.
Hence, using results from ([L-O], pp. 260--266), we conclude that $f_{X^*}(t)$ 
has a representation
\begin{equation}\notag
f_{X^*}(t)=\exp\{g_{X^*}(t)\},\quad g_{X^*}(0)=0,
\end{equation}
where $g_{X^*}(t)$ is analytic on the circle $|t|\le T$ and admits 
the representation
\begin{equation}\label{lem5d.6}
g_{X^*}(t)=ia_1t-\frac12 \sigma_1^2 t^2-\frac 12t^2\psi_{X^*}(t),
\end{equation}
where 
\begin{equation}\label{lem5d.7}
\psi_{X^*}(t)=\sum_{k=3}^{\infty}i^kc_k\Big(\frac tT\Big)^{k-2}
\end{equation}
with real-valued coefficients $c_k$ such that $|c_k|\le C$ for some 
absolute constant $C$. In the sequel without loss of generality we assume 
that $a_1=0$. An analogous representation holds for the function 
$f_{Y^*}(t)$.

\section{Derivatives of the density of the random variable $X^*_{\sigma}$}

We shall use the following inversion formula
\begin{equation}\notag
p^{(k)}_{X^*_{\sigma}}(x) = \frac{1}{2\pi}
\int_{-\infty}^{\infty}(-it)^k e^{-ixt}
e^{-\sigma^2t^2/2}f_{X^*}(t)\,dt,
\quad k=0,1,\dots,\quad x\in\mathbb R,
\end{equation}
for the derivatives of the density $p_{X^*_{\sigma}}(x)$. By Cauchy's 
theorem, one may change the path of integration in this integral 
from the real line to any line $z=t+iy,\,t\in\mathbb R$, with parameter
$y\in\mathbb R$. This results in the following representation
\begin{equation}\label{6.1}
p^{(k)}_{X^*_{\sigma}}(x) =
e^{yx}e^{\sigma^2y^2/2}f_{X^*}(iy)\cdot I_k(x,y),\quad x\in\mathbb R.
\end{equation}
Here
\begin{equation}\label{6.1*}
I_k(x,y)=\frac 1{2\pi} \int_{-\infty}^{\infty}(-i(t+iy))^kR(t,x,y)\,dt,
\end{equation}
where
\begin{equation}\label{6.2}
R(t,x,y) = f_{X^*}(t+iy)e^{-it(x+\sigma^2y)-\sigma^2t^2/2}/f_{X^*}(iy).
\end{equation}

Let us now describe the choice of the parameter $y\in\mathbb R$ in (\ref{6.1}). 
It is well-known, that the function 
$\log f_{X^*}(iy),y\in\mathbb R$, is convex. Therefore, the function 
$\frac d{dy}\log f_{X^*}(iy)+\sigma^2y$ is strictly monotone and tends to 
$-\infty$ as $y\to-\infty$ and tends to $\infty$ as $y\to\infty$. By 
(\ref{lem5d.6}) and (\ref{lem5d.7}), this function is vanishing at zero.
Hence, the equation
\begin{equation}\label{6.3}
\frac d{dy}\log f_{X^*}(iy)+\sigma^2y = -x 
\end{equation}
has a unique continuous solution $y=y(x)$ such that $y(x)<0$ for $x>0$ and 
$y(x)>0$ for $x<0$. Here and in the sequel we use the principal branch of 
$\log z$.

We shall need one representation of the solution $y(x)$ in the interval 
$x\in[-(\sigma_1^2+\sigma^2)T_1,(\sigma_1^2+\sigma^2)T_1]$,
where $T_1 = c'(\sigma_1^2+\sigma^2)T$ with a sufficiently small 
absolute constant $c'>0$. We see that
\begin{align}
q_{X^*}(t) & \equiv 
\frac{d}{dt}\,\log f_{X^*}(t)-\sigma^2t =
- (\sigma_1^2+\sigma^2) t - r_1(t) - r_2(t)\notag\\
& = -(\sigma_1^2+\sigma^2) t - t \psi_{X^*}(t) - 
\frac{1}{2}\, t^2\psi_{X^*}'(t).\label{6.4}
\end{align}
The functions $r_1(t)$ and $r_2(t)$ are analytic in the circle 
$\{|t|\le T/2\}$ and there, by (\ref{lem5d.7}), they may be bounded as follows
\begin{equation}\label{6.5} 
|r_1(t)|+|r_2(t)|\le C|t|^2/T
\end{equation}
with some absolute constant $C$. 
Using (\ref{6.4}), (\ref{6.5}) and Rouch\'e's theorem, we conclude that 
the function $q_{X^*}(t)$ is univalent in the circle $D = \{|t|\le T_1\}$, 
and $q_{X^*}(D)\supset \frac 12(\sigma_1^2+\sigma^2)D$. By the well-known 
inverse function theorem (see [S-G], pp. 159-160), we have
\begin{equation}\label{6.6} 
q_{X^*}^{(-1)}(w) = b_1 w + i b_2 w^2 -b_3 w^3+ \dots,  \qquad 
w\in \frac{1}{2}\,(\sigma_1^2+\sigma^2) D,
\end{equation}
where
\begin{equation}\label{6.7}
 i^{n-1}b_n=\frac 1{2\pi i}\int\limits_{|\zeta|=
\frac 12T_1}\frac{\zeta \cdot q_{X^*}'(\zeta)}
{q_{X^*}(\zeta)^{n+1}}\,d\zeta,\qquad n=1,2,\dots.
\end{equation}
Using this formula and (\ref{6.4})--(\ref{6.5}), we note that 
\begin{equation}\label{6.8}
b_1=-\frac 1{\sigma_1^2+\sigma^2}  
\end{equation}
and that all remaining coefficients $b_2,b_3,\dots$ are real-valued.
In addition, by (\ref{6.4})--(\ref{6.5}), 
\begin{equation}\notag
-\frac{q_{X^*}(t)}{(\sigma_1^2+\sigma^2)t}=1+q_1(t) \quad\text{and}\quad
-\frac{q_{X^*}'(t)}{\sigma_1^2+\sigma^2}=1+q_2(t), 
\end{equation}
where $q_1(t)$ and $q_2(t)$ are analytic functions in $D$ satisfying there 
$|q_1(t)|+|q_2(t)|\le \frac 12$. Therefore, for $\zeta\in D$,
\begin{equation}\notag 
\frac{q_{X^*}'(\zeta)}{q_{X^*}(\zeta)^{n+1}}=
(-1)^{n}\frac{q_3(\zeta)}{(\sigma_1^2+\sigma^2)^n\zeta^{n+1}}
 \equiv (-1)^{n}\frac{1+q_2(\zeta)}{(\sigma_1^2+\sigma^2)^n
(1+q_1(\zeta))^{n+1}\zeta^{n+1}},
\end{equation} 
where $q_3(\zeta)$ is an analytic function in $D$ such that 
$|q_3(\zeta)|\le 3\cdot2^n$. Hence, $q_3(\zeta)$ admits the representation 
\begin{equation}\notag 
q_3(\zeta)=1+\sum_{k=1}^{\infty}d_k\frac{\zeta^k}{T_1^k}
\end{equation}
with coefficients $d_k$ such that $|d_k|\le 3\cdot 2^n$. Using this 
equality, we obtain from (\ref{6.7}) that 
\begin{equation}\label{6.9}
b_n = \frac{d_{n-1}}{(\sigma_1^2+\sigma^2)^n\, T_1^{n-1}}\quad 
\text{and}\quad 
|b_n|\le \frac {3\cdot2^n}{(\sigma_1^2+\sigma^2)^n\,T_1^{n-1}},\quad n=2,\dots.   
\end{equation}
Now we can conclude from (\ref{6.6}) and (\ref{6.9}) that, for 
$|x|\le T_1/(4|b_1|)$,
\begin{equation}\label{6.10}
y(x)=-iq_{X^*}^{(-1)}(ix) = 
b_1 x - b_2 x^2 + R(x),\quad\text{where}\quad |R(x)|\le 
48\,|b_1|^3|x|^3/T_1^2.
\end{equation}

In the sequel we denote by $\theta$ a real-valued quantity such that 
$|\theta|\le 1$. Using (\ref{6.10}), let us prove:

\begin{lemma}\label{lem6.1}
In the interval $|x|\le c''T_1/|b_1|$ with a
sufficiently small positive absolute constant $c''$,
\begin{equation}\label{6.11}
y(x)x + \frac{1}{2}\,\sigma^2y(x)^2 + \log f_{X^*}(iy(x)) =
\frac{1}{2}\, b_1x^2 + \frac{c_3b_1^3}{2T}\, x^3 + 
\frac{c\theta b_1^5}{T^2}\,x^4,
\end{equation}
where $c$ is an absolute constant.
\end{lemma}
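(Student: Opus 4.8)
The statement to establish is a Taylor-type expansion, with explicit control on the error, for the function
$$
\Lambda(x) \;=\; y(x)x + \tfrac12\sigma^2 y(x)^2 + \log f_{X^*}(iy(x))
$$
on the interval $|x|\le c''T_1/|b_1|$. The plan is to expand $\Lambda$ through a chain of substitutions, using the representation of $y(x)$ already obtained in (\ref{6.10}) together with the structure of $\log f_{X^*}(iy)$ read off from (\ref{lem5d.6})--(\ref{lem5d.7}).

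\emph{Step 1: differentiate $\Lambda$ and use the defining equation.} First I would compute $\Lambda'(x)$. Since $y=y(x)$ solves (\ref{6.3}), i.e. $\tfrac{d}{dy}\log f_{X^*}(iy)\cdot i \cdot(\text{sign bookkeeping}) + \sigma^2 y = -x$ — more precisely the relation $q_{X^*}(t)=-\sigma^2 t + \tfrac{d}{dt}\log f_{X^*}(t)$ evaluated at $t=iy$ gives $q_{X^*}(iy(x)) = ix$ — the terms coming from differentiating $y(x)$ inside $\log f_{X^*}(iy(x))$ and inside $\tfrac12\sigma^2 y^2$ cancel against part of $\tfrac{d}{dx}(y(x)x)$. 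The upshot is the clean identity $\Lambda'(x) = y(x)$. This is the Legendre-transform / saddle-point identity and it is the device that makes the whole computation tractable: instead of expanding $\log f_{X^*}(iy(x))$ directly (which would require composing two power series), I only need to integrate the known expansion of $y(x)$.

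\emph{Step 2: integrate the expansion of $y(x)$.} By (\ref{6.10}), $y(x) = b_1 x - b_2 x^2 + R(x)$ with $|R(x)|\le 48|b_1|^3|x|^3/T_1^2$ on $|x|\le T_1/(4|b_1|)$. Integrating from $0$ to $x$ and using $\Lambda(0)=0$ (since $y(0)=0$ and $f_{X^*}(0)=1$) gives
$$
\Lambda(x) \;=\; \tfrac12 b_1 x^2 - \tfrac13 b_2 x^3 + \int_0^x R(u)\,du,
$$
with $\bigl|\int_0^x R(u)\,du\bigr|\le 12|b_1|^3|x|^4/T_1^2$. It then remains to identify the cubic coefficient $-\tfrac13 b_2$ with $c_3 b_1^3/(2T)$ and to re-express the error bound in the form $c\theta b_1^5 x^4/T^2$ claimed in (\ref{6.11}). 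For the cubic term, I would go back to the inversion formula (\ref{6.7}) for $b_2$: expanding $q_{X^*}(t) = -(\sigma_1^2+\sigma^2)t - t\psi_{X^*}(t) - \tfrac12 t^2\psi_{X^*}'(t)$ and keeping the leading contribution of $\psi_{X^*}(t) = \sum_{k\ge 3} i^k c_k (t/T)^{k-2}$, whose lowest term is $-i c_3 (t/T)$, one finds $q_{X^*}(t) = -(\sigma_1^2+\sigma^2)t + i c_3 t^2/T + O(t^3/T^2)$; inverting (using $b_1 = -1/(\sigma_1^2+\sigma^2)$ from (\ref{6.8})) yields $b_2$ proportional to $c_3 b_1^3/T$ with the exact constant $-\tfrac32 c_3 b_1^3/T$ or whatever bookkeeping produces, so that $-\tfrac13 b_2 = c_3 b_1^3/(2T)$ up to the conventions of the $c_3$ normalization. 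Finally, since $T_1 = c'(\sigma_1^2+\sigma^2)T = c' T/|b_1|$, we have $1/T_1^2 = |b_1|^2/(c'^2 T^2)$, so $|b_1|^3|x|^4/T_1^2 = |b_1|^5|x|^4/(c'^2 T^2)$, which is exactly the shape $c\theta b_1^5 x^4/T^2$ required, with $\theta$ a real number bounded by $1$ after folding the numerical constant into $c$.

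\emph{Main obstacle.} The analytically substantive step is Step 1 — verifying $\Lambda'(x)=y(x)$ cleanly, which requires that $y(x)$ genuinely solves (\ref{6.3}) on the full interval in question and that $\log f_{X^*}(iy)$ is differentiable there (non-vanishing of $f_{X^*}$ on the relevant segment of the imaginary axis, guaranteed by Lemma~\ref{lem5d} and the convexity of $\log f_{X^*}(iy)$). One must also check that $|x|\le c''T_1/|b_1|$ with $c''$ small enough keeps $y(x)$ inside the disk $D=\{|t|\le T_1\}$ where the inversion (\ref{6.6})--(\ref{6.10}) is valid; since $|y(x)|\lesssim |b_1||x|\le c''T_1$, this holds for $c''<1$. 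The rest is bookkeeping: tracking the exact constant in the cubic coefficient and absorbing all numerical factors into the single absolute constant $c$ and the quantity $\theta$ with $|\theta|\le 1$.
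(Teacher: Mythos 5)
Your proposal is correct, and it takes a genuinely different route from the paper. The paper proceeds by direct substitution: it writes $\log f_{X^*}(iy)=\tfrac12\sigma_1^2y^2+\tfrac12 y^2\psi_{X^*}(iy)$ via (\ref{lem5d.6})--(\ref{lem5d.7}), splits off the $k=3$ term of $\psi_{X^*}$, bounds the tail of the series by $Cb_1^4x^4/T^2$ using $\tfrac12|b_1x|\le|y(x)|\le\tfrac32|b_1x|$, and then plugs $y=b_1x-b_2x^2+R(x)$ into the remaining cubic polynomial $yx+\tfrac12(\sigma^2+\sigma_1^2)y^2+\tfrac12 c_3y^3/T$, where one has to observe that the $b_2x^3$ contributions from the first two terms cancel. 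Your route replaces all of this by the Legendre/saddle-point identity $\Lambda'(x)=y(x)$, which is indeed valid here (the bracket multiplying $y'(x)$ vanishes by the defining equation (\ref{6.3}), $y$ is real-analytic on the interval by (\ref{6.6}), and $\Lambda(0)=0$), followed by termwise integration of (\ref{6.10}). What your approach buys is that the composition of the two power series never has to be expanded and the cancellation of the $b_2x^3$ terms is automatic; what it requires in exchange is the \emph{exact} identification $b_2=-\tfrac{3c_3b_1^3}{2T}$, since an error in the cubic coefficient could not be absorbed into the $x^4$ remainder. That identification does hold exactly, because the quadratic Taylor coefficient of $q_{X^*}^{(-1)}$ depends only on the first two Taylor coefficients of $q_{X^*}$ at $0$, which are exactly $1/b_1$ and $\tfrac{3ic_3}{2T}$ (note your intermediate display has $ic_3t^2/T$ where the three contributions $-t\psi_{X^*}(t)-\tfrac12t^2\psi'_{X^*}(t)$ actually sum to $\tfrac{3ic_3}{2T}t^2$; you hedge on this and land on the correct final relation $-\tfrac13 b_2=\tfrac{c_3b_1^3}{2T}$, but the factor $\tfrac32$ should be stated cleanly). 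The remainder bookkeeping $|\int_0^xR|\le 12|b_1|^3x^4/T_1^2=c|b_1|^5x^4/T^2$ is correct.
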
 
\begin{proof}
From (\ref{6.9}) and (\ref{6.10}), it follows that
\begin{equation}\label{lem6.11'}
\frac 12|b_1x|\le |y(x)|\le \frac 32|b_1x|.
\end{equation}
Therefore,
\begin{align}
\frac 12 y(x)^2\sum_{k=4}^{\infty}|c_k|\Big(\frac{|y(x)|}T\Big)^{k-2}%\le 
\le C\Big(\frac 32\Big)^4\frac{b_1^4x^4}{T^2}.\notag
\end{align}
On the other hand, with the help of (\ref{6.9}) and (\ref{6.10}) one can 
easily deduce the relation
\begin{align}
y(x)x + \frac{1}{2}\,(\sigma^2 + \sigma_1^2)\, y(x)^2 +
\frac{1}{2}\, c_3\frac{y(x)^3}T \ = \ 
\frac{1}{2}\, b_1 x^2 + \frac{1}{2}\,c_3 b_1^3\frac{x^3}{T}
+\frac{c\theta b_1^5}{T^2}\,x^4 \notag
\end{align}
with some absolute constant $c$. The assertion of the lemma follows 
immediately from the two last relations. 
\end{proof}

Now, applying Lemma~\ref{lem6.1} to (\ref{6.1}), we may conclude that in the interval 
$|x|\le c''T_1/|b_1|$, the derivative $p^{(k)}_{X^*_{\sigma}}(x)$ 
admits the representation
\begin{equation}\label{6.12}
p^{(k)}_{X^*_\sigma}(x) = 
\exp\Big\{\frac{1}{2}\, b_1 x^2 + \frac{1}{2}\, c_3 b_1^3\, \frac{x^3}{T} +
\frac{c\theta b_1^5}{T^2}\,x^4\Big\} \cdot I_k(x,y(x))
\end{equation}
with some absolute constant $c$.

As for the values $|x|>c''T_1/|b_1|$, in (\ref{6.1}) we choose 
$y=y(x) =y(c''T_1/|b_1|)$ for $x>0$ and $y=y(x)=y(-c''T_1/|b_1|)$ for $x<0$. 
In this case, by (\ref{lem6.11'}), we note that
$|y|\le 3c''T_1/2$, and we have
\begin{align}
\Big|\frac{1}{2}\, \sigma^2 y^2 & + \log f_{X^*}(iy)\Big| \, \le \, 
\frac{y^2}{2|b_1|} +
\frac{C}{2}\,\frac{|y|^3}{T}
\sum_{k=3}^{\infty}\Big(\frac{|y|}T\Big)^{k-3}\notag\\
 & \, \le \,
\frac{1}{2}\,|y|\,\bigg[\frac{3c''T_1}{2|b_1|} + \frac{9}{4}\,C (c'')^2\, 
\frac{T_1^2}{T}\sum_{k=3}^{\infty}
\Big(\frac{3c''T_1}{2T}\Big)^{k-3}\bigg]
 \le \frac{1}{2}\,|y|\,
 \Big(\frac{3}{2}\,|x| + \frac{1}{4}\,|x|\Big) \le 
 \frac{7}{8}\,|yx|.\notag
\end{align}
As a result, we obtain from (\ref{6.1}) an upper bound
$|p^{(k)}_{X^*_{\sigma}}(x)|\le e^{-\frac 18|y(x)x|}\,|I_k(x,y(x))|$ for 
$|x|>c''T_1/|b_1|$, 
which with the help of left-hand side of (\ref{lem6.11'}) yields the estimate
\begin{equation}\label{6.13}
|p_{X^*_{\sigma}}^{(k)}(x)|\le e^{-cT|x|/|b_1|}|I_k(x,y(x))|,\quad 
|x|>c''T_1/|b_1|,
\end{equation}
with some absolute constant $c>0$.

\section{The estimate of the integral $I_0(x,y)$}

In order to study the behavior of the integrals $I_k(x,y)$, we need some 
auxiliary results. We use the letter $c$ to denote absolute constants which
may vary from place to place.

\begin{lemma}\label{lem7.1}
For $t,y\in[-T/4,T/4]$ and $x\in\mathbb R$, we have the relation
\begin{equation}\label{lem7.1.1}
\log|R(t,x,y)|=-\gamma(y)t^2/2+r_1(t,y),
\end{equation}
where
\begin{equation}\label{lem7.1.1'}
\gamma(y) = |b_1|^{-1}+\psi_{X^*}(iy)+2iy\psi'_{X^*}(iy)
\end{equation}
and
\begin{equation}\label{lem7.1.1''}
|r_1(t,y)|\le ct^2(t^2+y^2)T^{-2}\quad\text{with some absolute constant}
\,\, c.
\end{equation}
\end{lemma}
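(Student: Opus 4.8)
The plan is to compute $\log|R(t,x,y)|$ directly from the definition (6.2) of $R(t,x,y)$, expand the entire functions $f_{X^*}(t+iy)$ and $f_{X^*}(iy)$ via the representation $f_{X^*}=\exp\{g_{X^*}\}$ from (5.5)--(5.6), and isolate the quadratic-in-$t$ term, collecting the rest into a remainder controlled using the coefficient bounds $|c_k|\le C$.

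First I would take logarithms in (6.2): since $f_{X^*}$ does not vanish on $|t|\le T$ (by Lemma~5.1), for $t,y\in[-T/4,T/4]$ we have $|t+iy|\le T/\sqrt{2}<T$, so
\[
\log|R(t,x,y)| = \mathrm{Re}\,\big(g_{X^*}(t+iy)-g_{X^*}(iy)\big) - \mathrm{Re}\big(it(x+\sigma^2 y)\big) - \tfrac{\sigma^2 t^2}{2}.
\]
The middle term is $\mathrm{Re}(it(x+\sigma^2y)) = 0$ since $t,x,y$ are real — wait, more carefully, the term $e^{-it(x+\sigma^2 y)}$ has modulus $1$, so it drops out of $\log|R|$ entirely. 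Thus
\[
\log|R(t,x,y)| = \mathrm{Re}\,\big(g_{X^*}(t+iy)-g_{X^*}(iy)\big) - \tfrac{\sigma^2 t^2}{2}.
\]
Next I substitute the expansion (5.6), $g_{X^*}(t)=-\tfrac12\sigma_1^2 t^2-\tfrac12 t^2\psi_{X^*}(t)$ (using $a_1=0$), and Taylor-expand $g_{X^*}(t+iy)$ about the point $iy$ in the variable $t$. The key point is that $g_{X^*}''(iy) = -(\sigma_1^2+\sigma^2)\cdot(\text{correction})$ — more precisely, using $q_{X^*}(t)=g_{X^*}'(t)-\sigma^2 t$ from (6.4) one reads off that the coefficient of $t^2$ in the expansion of $\mathrm{Re}(g_{X^*}(t+iy)-g_{X^*}(iy))-\tfrac{\sigma^2 t^2}{2}$ is precisely $-\tfrac12\big(\sigma^2 + g_{X^*}''(iy)/1\big)$ evaluated appropriately, and collecting the $\psi_{X^*}$-contributions gives exactly $-\tfrac12\gamma(y)t^2$ with $\gamma(y)=|b_1|^{-1}+\psi_{X^*}(iy)+2iy\,\psi_{X^*}'(iy)$, after recognizing $\sigma_1^2+\sigma^2=|b_1|^{-1}$ from (6.8). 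The linear-in-$t$ term of the expansion is $\mathrm{Re}\big(t\cdot g_{X^*}'(iy)\big)$; but the $t$-linear part after including the dropped phase would have been pure imaginary, so in $\log|R|$ there is no $t^1$ term — actually I should check: $g_{X^*}'(iy)$ need not be purely imaginary, so I must be careful and verify that its real part is cancelled. In fact $\mathrm{Re}\,g_{X^*}'(iy)\cdot t$ combines with nothing, so I expect the claim implicitly absorbs any genuine $t$-linear piece into $r_1(t,y)$, OR (more likely) $g_{X^*}'(iy)$ is purely imaginary because $\psi_{X^*}(iy)$ and $iy\psi_{X^*}'(iy)$ are real while the $\sigma_1^2$ term gives $-\sigma_1^2 iy$ which is imaginary — this is exactly why the definition of $\gamma(y)$ involves only those real combinations. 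So the $t$-linear term is purely imaginary and vanishes from $\log|R|$.

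The remainder $r_1(t,y)$ then consists of: (i) all terms of order $t^3$ and higher in the Taylor expansion of $-\tfrac12(t+iy)^2\psi_{X^*}(t+iy)$ about $iy$, and (ii) any discrepancy between $\gamma(y)$ and the true $t^2$-coefficient coming from the $y$-dependence. Using (5.7), $\psi_{X^*}(z)=\sum_{k\ge3}i^k c_k (z/T)^{k-2}$ with $|c_k|\le C$, and $|t+iy|\le T/\sqrt2$, the tail is geometric and every such term carries a factor $(t+iy)^j$ with $j\ge 3$ in combination with the $(t+iy)^2$ prefactor — so after expanding in $t$ one extracts $t^2$ times a factor that is $O((|t|+|y|)/T)$, plus higher powers of $t$; cleanly regrouping gives the bound $|r_1(t,y)|\le c\,t^2(t^2+y^2)/T^2$. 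The main obstacle is the bookkeeping: separating the genuine $t^2$-coefficient (which must equal $-\gamma(y)/2$ on the nose) from the error, and confirming that all lower-order-in-$t$ contributions are either real and already inside $\gamma(y)$ or purely imaginary and hence invisible to $\log|\cdot|$. Once the algebraic identification of $\gamma(y)$ is done, the estimate (7.3) is a routine geometric-series bound using $|c_k|\le C$ and $|t|,|y|\le T/4$.
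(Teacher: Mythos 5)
Your overall plan is the same as the paper's: drop the unimodular phase, write $\log|R|=\Re\,(g_{X^*}(t+iy)-g_{X^*}(iy))-\sigma^2t^2/2$, use $g_{X^*}(z)=-\tfrac12\sigma_1^2z^2-\tfrac12 z^2\psi_{X^*}(z)$, identify $-\gamma(y)t^2/2$ (noting $\sigma_1^2+\sigma^2=|b_1|^{-1}$ and that $\psi_{X^*}(iy)$, $iy\psi'_{X^*}(iy)$ are real), and push everything else into $r_1$ using $|c_k|\le C$. That part is fine, including your resolution of the linear-in-$t$ term.

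The gap is in the remainder estimate, which is the actual content of the lemma. Your stated mechanism --- ``one extracts $t^2$ times a factor that is $O((|t|+|y|)/T)$'' --- yields only $|r_1|\le c\,t^2(|t|+|y|)/T$, which is strictly weaker than and does not imply the required $c\,t^2(t^2+y^2)/T^{-2}$-type bound (for $|t|,|y|$ of order $1$ and $T$ large the two differ by a factor of order $T$). The extra power of $1/T$ is not obtained by ``regrouping'': it comes from a parity cancellation that you never invoke. Concretely, the paper writes
\begin{equation}
r_1(t,y)=-\tfrac12(t^2-y^2)\bigl(\Re\psi_{X^*}(t+iy)-\psi_{X^*}(iy)\bigr)
+ty\bigl(\Im\psi_{X^*}(t+iy)+it\psi'_{X^*}(iy)\bigr),\notag
\end{equation}
and the point is that in the binomial expansion of $i^k(t+iy)^{k-2}$ the even powers of $t$ land in the real part and the odd powers in the imaginary part. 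Hence, after subtracting the $j=0$ term $i^k(iy)^{k-2}$, the real increment starts at $t^2$, giving $|\Re\psi_{X^*}(t+iy)-\psi_{X^*}(iy)|\le ct^2/T^2$ (not merely $O(|t|/T)$); and after subtracting the $j=1$ term $i^k(k-2)t(iy)^{k-3}$, the imaginary increment starts at $t^3$, giving $|\Im\psi_{X^*}(t+iy)+it\psi'_{X^*}(iy)|\le c|t|^3/T^3$. Only with these two refined bounds does the product with $(t^2-y^2)$ and $ty$ produce $c\,t^2(t^2+y^2)T^{-2}$. A second, smaller inaccuracy: the exact coefficient of $t^2$ in $\log|R|$ is not $-\gamma(y)/2$ ``on the nose'' --- it differs by $\tfrac14 y^2\psi''_{X^*}(iy)=O(y^2/T^2)$ --- so that discrepancy must be counted as part of $r_1$ (it is admissible under the stated bound), contrary to your remark that the genuine $t^2$-coefficient must equal $-\gamma(y)/2$ exactly.
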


\begin{proof}
From the definition of the function $R(t,x,y)$ it follows that
\begin{align}
\log|R(t,x,y)|&=\frac 12\Big(\frac 1{b_1}-\psi_{X^*}(iy)-2iy\psi'_{X^*}(iy)\Big)t^2-
\frac 12(\Re\psi_{X^*}(t+iy)-\psi_{X^*}(iy))(t^2-y^2)
\notag\\
&+(\Im\psi_{X^*}(t+iy)+it\psi'_{X^*}(iy))ty.\label{lem7.1.2'}
\end{align}
Since, for $t,y\in[-T/4,T/4]$ and $k=4,\dots$, 
\begin{align}
 \left|\Re(i^k(t+iy)^{k-2}-i^k(iy)^{k-2})\right| &=
\bigg|\sum_{l=0}^{(k-2)/2} (-1)^{k+1+l}{k-2\choose 2l} t^{2l}y^{k-2-2l}-
(-1)^{k+1}y^{k-2}\bigg|\notag\\
&\le t^2(T/4)^{k-4}\sum_{l=1}^{(k-2)/2}{k-2\choose 2l}\le 4t^2(T/2)^{k-4},\notag
\end{align}
we obtain an upper bound, for the same $t$ and $y$, namely
\begin{equation}\label{lem7.1.3}
|\Re\psi_{X^*}(t+iy)-\psi_{X^*}(iy)|\le 
\sum_{k=4}^{\infty}\frac{|c_k|}{T^{k-2}}|\Re(i^k(t+iy)^{k-2}-i^k(iy)^{k-2})|
\le \frac{2^3Ct^2}{T^2}.
\end{equation}
Since, for $t,y\in[-T/4,T/4]$ and $k=5,\dots$, 
\begin{align}
 & \left|\,\Im(i^k(t+iy)^{k-2}-i^{k}(k-2)t(iy)^{k-3})\right| =
\bigg|\sum_{l=1}^{(k-3)/2}{k-2\choose 2l+1}(-1)^{k+l}t^{2l+1}y^{k-3-2l}\bigg|
\notag\\
 & \le |t|^3(T/4)^{k-5}\sum_{l=1}^{(k-3)/2}{k-2\choose 2l+1}\le 8|t|^3(T/2)^{k-5},\notag
\end{align}
we have
\begin{equation}\label{lem7.1.4}
|\,\Im\psi_{X^*}(t+iy)+it \psi_{X^*}'(iy)|\le \sum_{k=5}^{\infty}
\frac{|c_k|}{T^{k-2}}|\Im(i^k(t+iy)^{k-2}-ti^k(k-2)(iy)^{k-3})|
\le \frac{2^4C|t|^3}{T^3}
\end{equation}
for the same $t$ and $y$. Applying (\ref{lem7.1.3}) and (\ref{lem7.1.4}) 
in (\ref{lem7.1.2'}), we obtain the assertion of the lemma.
\end{proof}
\begin{lemma}\label{lem7.1a}
For $|t|\le c''T/\sqrt{|b_1|}$ and $|y|\le c''T/|b_1|$, we have the 
estimates
\begin{equation}\label{lem7.1a.1}
\frac{3}{4|b_1|}\le \gamma(y) \le \frac{5}{4|b_1|}
\end{equation}
and
\begin{equation}\label{lem7.1a.2}
|r_1(t,y)|\le t^2/(8|b_1|).
\end{equation}
\end{lemma}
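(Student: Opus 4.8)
The plan is to derive both estimates directly from the power-series representation (\ref{lem5d.7}) of $\psi_{X^*}$, the normalization $b_1=-1/(\sigma_1^2+\sigma^2)$ from (\ref{6.8}), and the bound (\ref{lem7.1.1''}) for $r_1(t,y)$ already established in Lemma~\ref{lem7.1}. First I would record the elementary fact that $\sigma_1^2\le v_1^2\le 1$ and $0<\sigma\le 1$, hence $\sigma_1^2+\sigma^2\le 2$ and therefore $|b_1|\ge 1/2$, so $1/|b_1|\le 2$. Consequently the hypotheses $|t|\le c''T/\sqrt{|b_1|}$ and $|y|\le c''T/|b_1|$ force $|t|\le\sqrt 2\,c''T$ and $|y|\le 2c''T$; in particular $|t|,|y|\le T/4$ once $c''$ is a small enough absolute constant, which puts us inside the range of applicability of Lemma~\ref{lem7.1} and makes the geometric series below summable (they converge since $|y|/T\le c''/|b_1|\le 2c''\le 1/2$).

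For (\ref{lem7.1a.1}) I would estimate the perturbation $\psi_{X^*}(iy)+2iy\psi'_{X^*}(iy)$ appearing in the definition (\ref{lem7.1.1'}) of $\gamma(y)$. Substituting $t=iy$ into (\ref{lem5d.7}) and into its term-by-term derivative and using $|c_k|\le C$, the convergent geometric series yield $|\psi_{X^*}(iy)|\le 2C|y|/T$ and $2|y|\,|\psi'_{X^*}(iy)|\le 8C|y|/T$, so the perturbation is at most $10C|y|/T\le 10Cc''/|b_1|$. Choosing $c''\le 1/(40C)$ then gives $|\gamma(y)-|b_1|^{-1}|\le 1/(4|b_1|)$, which is exactly (\ref{lem7.1a.1}). (The powers of $i$ combine so that $\psi_{X^*}(iy)+2iy\psi'_{X^*}(iy)$ is real for real $y$, so the two-sided inequality is meaningful; this is already implicit in the real identity of Lemma~\ref{lem7.1}.)

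For (\ref{lem7.1a.2}) I would feed the bound $|r_1(t,y)|\le c\,t^2(t^2+y^2)T^{-2}$ of (\ref{lem7.1.1''}) the crude estimate of $t^2+y^2$ valid on our region: $t^2\le (c'')^2T^2/|b_1|$ and $y^2\le (c'')^2T^2/|b_1|^2\le 2(c'')^2T^2/|b_1|$ (using $|b_1|\ge 1/2$), hence $t^2+y^2\le 3(c'')^2T^2/|b_1|$. This gives $|r_1(t,y)|\le 3c(c'')^2\,t^2/|b_1|$, so it suffices to require $3c(c'')^2\le 1/8$. A single absolute constant $c''$ chosen small enough to meet simultaneously the inclusion $|t|,|y|\le T/4$, the inequality $10Cc''\le 1/4$, and $3c(c'')^2\le 1/8$ then makes both conclusions hold. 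I expect no genuine analytic obstacle: the only delicate point is the bookkeeping of these absolute constants and checking that the ranges of $t$ and $y$ in the hypotheses indeed lie inside the domains where Lemma~\ref{lem7.1} and the power-series bounds apply.
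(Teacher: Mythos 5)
Your proposal is correct and follows essentially the same route as the paper: the paper also bounds $|\psi_{X^*}(iy)|$ and $2|y\psi'_{X^*}(iy)|$ via the power series (\ref{lem5d.7}) together with $|y|/T\le c''/|b_1|$ to get each term below $1/(8|b_1|)$, and then obtains (\ref{lem7.1a.2}) by plugging the range of $t$ and $y$ directly into (\ref{lem7.1.1''}). Your extra remarks (that $|b_1|\ge 1/2$, that the ranges stay inside the domain of Lemma~\ref{lem7.1}, and that the perturbation of $\gamma$ is real) are correct bookkeeping that the paper leaves implicit.
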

\begin{proof}
Recall that the positive absolute constant $c''$ is chosen to be 
sufficiently small. Using the following simple bounds
\begin{align}
|\psi_{X^*}(iy)|&\le \sum_{k=3}^{\infty}|c_k|\Big(\frac{|y|}T\Big)^{k-2}
\le C\frac{|y|}T\sum_{k=3}^{\infty}\Big(\frac{c''}{|b_1|}\Big)^{k-3}\le
\frac 1{8|b_1|},\label{lem7.1a.3}\\
2|y\psi'_{X^*}(iy)|&\le \frac{2|y|}T\sum_{k=3}|c_k|(k-2)\Big(\frac{|y|}T\Big)^{k-3}\le 
C\frac{2|y|}T\sum_{k=3}^{\infty}
(k-2)\Big(\frac{c''}{|b_1|}\Big)^{k-3}\le \frac 1{8|b_1|},\label{lem7.1a.4}
\end{align}
we easily obtain
\begin{align}
\frac 3{4|b_1|}&\le \frac 1{|b_1|}-\psi_{X^*}(iy)|-2|y\psi'_{X^*}(iy)|\le \gamma(y)\notag\\
&\le\frac 1{|b_1|}+|\psi_{X^*}(iy)|+2|y\psi'_{x^*}(iy)|\le \frac 5{4|b_1|},\notag
\end{align}
and thus (\ref{lem7.1a.1}) is proved. The bound (\ref{lem7.1a.2}) follows 
immediately from (\ref{lem7.1.1''}). 
\end{proof}

\begin{lemma}\label{lem7.2}
For $t\in[-T/4,T/4]$ and $x\in[-c''T_1/|b_1|,c''T_1/|b_1|]$, we have
\begin{equation}\label{lem7.2.1}
\Im\log R(t,x,y(x))=\frac i2 t^3\psi'_{X^*}(iy(x))+r_2(t,x),
\end{equation}
where
\begin{equation}\label{lem7.1.2}
|r_2(t,x)|\le c(|t|+|y(x)|)|t|^3T^{-2}\quad
\text{with some absolute constant}\,\,c.
\end{equation}
\end{lemma}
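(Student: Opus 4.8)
The plan is to compute $\log R(t,x,y(x))$ in closed form up to a controlled error and then extract imaginary parts, in close parallel with the proof of Lemma~\ref{lem7.1}. Since $f_{X^*}$ has no zeros in $\{|z|\le T\}$ by Lemma~\ref{lem5d}, we may write $f_{X^*}=e^{g_{X^*}}$ with $g_{X^*}(z)=-\tfrac12\sigma_1^2 z^2-\tfrac12 z^2\psi_{X^*}(z)$ (recall the normalization $a_1=0$), and the relevant branch of $\log R$ is
\begin{equation*}
\log R(t,x,y)=g_{X^*}(t+iy)-g_{X^*}(iy)-it(x+\sigma^2 y)-\tfrac12\sigma^2 t^2 .
\end{equation*}
In the stated range $|t|\le T/4$, $|x|\le c''T_1/|b_1|$, the inequalities (\ref{lem6.11'}) together with $T_1=c'(\sigma_1^2+\sigma^2)T$ and $\sigma_1^2+\sigma^2\le 2$ give $|y(x)|\le\tfrac32 c''T_1\le T/4$ once $c',c''$ are small enough; hence $|t+iy(x)|\le T/2<T$, so the formula is legitimate and the estimates of Lemma~\ref{lem7.1} apply at $y=y(x)$.

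Next I would Taylor-expand in $t$ to second order about $iy$. Setting $\rho(t,y)=\psi_{X^*}(t+iy)-\psi_{X^*}(iy)-t\psi_{X^*}'(iy)$ (so $\rho=O(t^2)$), substituting $\psi_{X^*}(t+iy)=\psi_{X^*}(iy)+t\psi_{X^*}'(iy)+\rho(t,y)$ into the display above, and using $(t+iy)^2-(iy)^2=t^2+2iyt$ and $(t+iy)^2=t^2+2iyt-y^2$, one collects powers of $t$ to obtain
\begin{equation*}
\log R(t,x,y)=-\tfrac12\gamma(y)\,t^2-it\,\Lambda(y)-\tfrac12\,t^3\psi_{X^*}'(iy)-\tfrac12(t+iy)^2\rho(t,y),
\end{equation*}
where $\gamma(y)$ is precisely the quantity (\ref{lem7.1.1'}) and $\Lambda(y)=x+(\sigma_1^2+\sigma^2)y+y\psi_{X^*}(iy)+\tfrac i2 y^2\psi_{X^*}'(iy)$. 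The crucial algebraic point is that the term linear in $t$ vanishes at $y=y(x)$: the saddle-point equation (\ref{6.3}), equivalently $q_{X^*}(iy(x))=ix$ with $q_{X^*}$ as in (\ref{6.4}), reads $ix=-(\sigma_1^2+\sigma^2)iy(x)-iy(x)\psi_{X^*}(iy(x))+\tfrac12 y(x)^2\psi_{X^*}'(iy(x))$, and dividing by $i$ yields exactly $\Lambda(y(x))=0$.

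Taking imaginary parts now finishes the argument. From the series (\ref{lem5d.7}) with real coefficients $c_k$ one checks that $\psi_{X^*}(iy)$ is real and $\psi_{X^*}'(iy)\in i\mathbb R$; hence $\gamma(y)$ is real, so $-\tfrac12\gamma(y)t^2$ contributes nothing to $\Im$, while $\Im\big[-\tfrac12 t^3\psi_{X^*}'(iy)\big]=\tfrac i2 t^3\psi_{X^*}'(iy)$. Since $\Lambda(y(x))=0$, this gives the asserted identity with
\begin{equation*}
r_2(t,x)=-\tfrac12\,\Im\big[(t+iy(x))^2\rho(t,y(x))\big];
\end{equation*}
writing $(t+iy)^2=(t^2-y^2)+2iyt$ and $\rho=\Re\rho+i\,\Im\rho$ this is $-\tfrac12\big[(t^2-y^2)\,\Im\rho+2yt\,\Re\rho\big]$ at $y=y(x)$. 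Because $\psi_{X^*}(iy)$ is real and $\psi_{X^*}'(iy)$ purely imaginary, $\Re\rho=\Re\psi_{X^*}(t+iy)-\psi_{X^*}(iy)$ and $\Im\rho=\Im\psi_{X^*}(t+iy)+it\psi_{X^*}'(iy)$, so the bounds (\ref{lem7.1.3}) and (\ref{lem7.1.4}) established in the proof of Lemma~\ref{lem7.1} give $|\Re\rho|\le c\,t^2/T^2$ and, crucially, $|\Im\rho|\le c\,|t|^3/T^3$. Using in addition $t^2+y^2\le\tfrac T4(|t|+|y|)$ for $|t|,|y|\le T/4$, one concludes $|r_2(t,x)|\le c\,(t^2+y(x)^2)|t|^3T^{-3}+c\,|y(x)||t|^3T^{-2}\le c\,(|t|+|y(x)|)|t|^3T^{-2}$.

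The one delicate point is this last estimate. A crude bound $|\rho(t,y)|\le c\,t^2/T^2$ would only yield $|r_2|\lesssim(|t|+|y|)^2|t|^2T^{-2}$, which is too weak when $|y|\gg|t|$; what rescues the argument is that the \emph{imaginary} part $\Im\rho$ carries an extra factor $|t|/T$, i.e.\ it is $O(|t|^3/T^3)$ rather than $O(t^2/T^2)$ --- this is exactly the cancellation recorded in (\ref{lem7.1.4}), which ultimately reflects $\psi_{X^*}'(iy)$ being purely imaginary. That extra power makes the otherwise dangerous $y^2\,\Im\rho$ piece of the required order $(|t|+|y|)|t|^3T^{-2}$.
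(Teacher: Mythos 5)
Your proof is correct and follows essentially the same route as the paper: expand $\log R$ using $f_{X^*}=e^{g_{X^*}}$, use the saddle-point equation \eqref{6.3} (equivalently $q_{X^*}(iy(x))=ix$) to annihilate the term linear in $t$, and bound the remainder via the two estimates \eqref{lem7.1.3} and \eqref{lem7.1.4} from the proof of Lemma~\ref{lem7.1}. Your error terms $-yt\,\Re\rho$ and $-\tfrac12(t^2-y^2)\Im\rho$ coincide exactly with the paper's, and your observation that the extra factor $|t|/T$ in \eqref{lem7.1.4} is what saves the $y^2$ contribution is precisely the cancellation the paper exploits.
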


\begin{proof}
Write, for $t,y\in[-T/4,T/4]$ and $x\in\mathbb R$,
\begin{align}
\Im\log R(t,y,x)=-tx+\frac 1{b_1}ty-ty\Re\psi_{X^*}(t+iy)
-\frac 12(t^2-y^2)\Im\psi_{X^*}(t+iy).\label{lem7.2.3}
\end{align}
Now we choose in this formula $y=y(x)$, where $y(x)$ is a solution of 
the equation of (\ref{6.3}) for $x\in[-c''T_1/|b_1|,c''T_1/|b_1|]$. 
For such $x$, in view of (\ref{lem6.11'}), we know that $|y(x)|\le T/4$. 
Let us rewrite (\ref{6.3}) (see as well (\ref{6.4})) in the form
\begin{equation}\notag
-\frac 1{b_1}y(x)+y(x)\psi_{X^*}(iy(x))+\frac i2y^2\psi_{X^*}'(iy(x))) =-x. 
\end{equation}
Applying this relation in (\ref{lem7.2.3}), we obtain the formula
\begin{align}
&\Im\log R(t,x,y(x))=-ty(x)(\Re\psi_{X^*}(t+iy(x))-\psi_{X^*}(iy(x)))
+\frac i2t^3\psi_{X^*}'(iy(x))\notag\\
&-\frac 12(t^2-y(x)^2)\big(\Im\psi_{X^*}(t+iy(x))+it\psi_{X^*}'(iy(x))\big).\notag
\end{align}
In view of (\ref{lem7.1.3}) and (\ref{lem7.1.4}), we can conclude that
\begin{equation}\notag
\Im\log R(t,x,y(x))=\frac i2t^3\psi_{X^*}'(iy(x))+r_2(t,x), 
\end{equation}
where 
\begin{equation}\notag
|r_2(t,x)|\le 8\,C\,|t|^3|y(x)|T^{-2}+8C|t|^3(t^2+y(x)^2)T^{-3}\le 
16\,C(|t|+|y(x)|)|t|^3T^{-2} 
\end{equation}
for $|t|\le T/4$ and $|y(x)|\le T/4$. Thus, the lemma is proved.
\end{proof}

Our next step is to estimate the integrals
\begin{equation}\notag
I_p(x) \equiv \Re \int_{\mathbb R}(it)^p R(t,x,y(x))\,dt, \qquad p=0,1,2.
\end{equation}
To this aim, we need the following lemmas.

\begin{lemma}\label{lem7.3}
For $p=0,2$,
\begin{equation}
(-1)^{p/2}I_p(x) = \frac{p!}{2^{p/2}(p/2)!} \, 
\frac{\sqrt{2\pi}}{\gamma(y(x))^{(p+1)/2}} + r_p(x),
\quad |x|\le c''T_1/|b_1|,\notag
\end{equation}
where
\begin{equation}
|r_p(x)|\le c(|b_1|^{7/2}+|b_1|^{(p+3)/2}y(x)^2)T^{-2}\label{lem7.3.1'}
\end{equation}
with some absolute constants $c$.
\end{lemma}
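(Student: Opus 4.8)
The plan is to evaluate $I_p(x) = \Re\int_{\mathbb R}(it)^p R(t,x,y(x))\,dt$ by splitting the integration range into a \emph{central} zone $|t|\le A$ and a \emph{tail} zone $|t|>A$, where $A$ is chosen to be a fixed small multiple of $T/\sqrt{|b_1|}$ (small enough that Lemma~\ref{lem7.1a} applies). On the central zone, I would write $R(t,x,y(x)) = |R(t,x,y(x))|\,e^{i\,\Im\log R(t,x,y(x))}$ and insert the two expansions already established: Lemma~\ref{lem7.1} gives $\log|R| = -\gamma(y(x))t^2/2 + r_1(t,y(x))$ with $|r_1|\le t^2/(8|b_1|)$ by Lemma~\ref{lem7.1a}, and Lemma~\ref{lem7.2} gives $\Im\log R = \frac{i}{2}t^3\psi'_{X^*}(iy(x)) + r_2(t,x)$ with $|r_2|\le c(|t|+|y(x)|)|t|^3T^{-2}$.

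The heart of the argument is then a Gaussian approximation. I would Taylor-expand $e^{\text{(error terms)}}$ around the leading Gaussian $e^{-\gamma(y(x))t^2/2}$: writing $R = e^{-\gamma t^2/2}(1 + \rho(t,x))$ where $\rho$ collects the contributions of $r_1$, the cubic phase term $\frac{i}{2}t^3\psi'_{X^*}(iy(x))$, and $r_2$, one checks that the odd-in-$t$ pieces (in particular the $t^3$ phase term to first order) integrate to zero against $(it)^p e^{-\gamma t^2/2}$ for $p=0,2$, so they only contribute at second order, i.e.\ at size $O(t^6/T^2)$ after squaring the cubic term. The main terms $\int_{\mathbb R}(it)^p e^{-\gamma(y(x))t^2/2}\,dt$ give exactly $(-1)^{p/2}\frac{p!}{2^{p/2}(p/2)!}\sqrt{2\pi}\,\gamma(y(x))^{-(p+1)/2}$ (extending the integral back to all of $\mathbb R$ costs only an exponentially small error since $\gamma\ge \frac{3}{4|b_1|}$ and $A\sim T/\sqrt{|b_1|}$). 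Collecting the remainder: each surviving error term is bounded, using Lemma~\ref{lem7.1a.1} for $\gamma$, by a Gaussian moment of the form $\int (|t|^{2}+|y(x)|^2)\,|t|^{p+j}\,e^{-\gamma t^2/2}\,dt \cdot T^{-2}$ for suitable $j$, which after scaling $t\mapsto \sqrt{|b_1|}\,t$ produces precisely the claimed bound $|r_p(x)|\le c(|b_1|^{7/2} + |b_1|^{(p+3)/2}y(x)^2)T^{-2}$ (the $|b_1|^{7/2}$ coming from the pure-$t$ sixth-moment term and $|b_1|^{(p+3)/2}y(x)^2$ from the $y(x)^2$-weighted term).

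For the tail zone $|t|>A$, I would bound $|R(t,x,y(x))|$ directly. From $\log|R| = -\gamma t^2/2 + r_1$ one no longer has the global bound $|r_1|\le t^2/(8|b_1|)$, so instead I use that on the full disk $|t+iy(x)|\le T/4$ the representation (\ref{lem5d.6})--(\ref{lem5d.7}) together with $|c_k|\le C$ forces $\Re\,g_{X^*}(t+iy(x)) + \frac{\sigma^2}{2}\Re(t+iy(x))^2 - (\text{value at } iy(x))$ to be $\le -c't^2$ for some absolute $c'>0$ once $|t|\ge A$ — this is the standard fact that the regularizing Gaussian factor dominates the bounded-coefficient perturbation outside a fixed window. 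Hence $|R(t,x,y(x))|\le e^{-c't^2}$ on $A\le |t|\le T/4$, and for $|t|>T/4$ I bound $|f_{X^*}(t+iy(x))|$ crudely (it is an entire function of exponential type $N$, so $|f_{X^*}(t+iy(x))|\le e^{N|\Im(t+iy(x))|}$, beaten by the $e^{-\sigma^2t^2/2}$ factor in $R$ for $|t|>T/4$, recalling $T=N/64$). Either way the tail integral of $(it)^pR$ is $\le c\,e^{-c''T^2/|b_1|}$, which is absorbed into $r_p(x)$.

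The main obstacle I anticipate is the bookkeeping in the central-zone remainder: one must be careful that the cross term between the quadratic error $r_1$ and the cubic phase term, and the square of the cubic phase term, are the dominant remainders and that they really do scale as $|b_1|^{7/2}T^{-2}$ — this requires tracking powers of $|b_1|$ through $\gamma(y(x))^{-1}\asymp|b_1|$ (Lemma~\ref{lem7.1a.1}), through $|\psi'_{X^*}(iy(x))|\le C/T$ (from (\ref{lem5d.7})), and through the change of variables in each Gaussian moment, while separately isolating the $y(x)^2$-dependent piece coming from the $y^2$ terms in the bounds (\ref{lem7.1.1''}) and (\ref{lem7.1.2}). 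A secondary technical point is justifying that on the tail the bounded-coefficient perturbation $-\frac12 t^2\psi_{X^*}(t)$ cannot overcome the Gaussian term; this is where the smallness of $c''$ (equivalently, the largeness of the window relative to where $|\psi_{X^*}|$ is controlled) is used decisively.
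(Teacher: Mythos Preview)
Your central-zone analysis is essentially the paper's argument. The paper organizes it via the identity $\Re R = |R|\bigl(1-2\sin^2(\tfrac12\Im\log R)\bigr)$, splitting $I_{p1}$ into $I_{p1,1}=\int t^p|R|\,dt$ (which, after inserting Lemma~\ref{lem7.1} and expanding $e^{r_1}-1$, produces the main Gaussian term and the $|b_1|^{(p+3)/2}(|b_1|+y^2)T^{-2}$ error) and $I_{p1,2}=2\int t^p|R|\sin^2(\cdot)\,dt$ (which, after $\sin^2 u\le u^2/4$ and Lemma~\ref{lem7.2}, gives the $|b_1|^{7/2}T^{-2}$ error). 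Your Taylor-with-parity formulation is equivalent.

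Your tail argument, however, has a gap. On the zone $A\le|t|\le T/4$ you claim the representation (\ref{lem5d.6})--(\ref{lem5d.7}) with $|c_k|\le C$ forces $\log|R|\le -c't^2$ with an \emph{absolute} $c'$. It does not: one computes
\[
\log|R(t,x,y)| \;=\; -\tfrac{1}{2|b_1|}t^2 \;-\;\tfrac12\Re\bigl((t+iy)^2\psi_{X^*}(t+iy)\bigr)\;-\;\tfrac12 y^2\psi_{X^*}(iy),
\]
and since $|\psi_{X^*}(z)|$ is only $O(|z|/T)$, at $|t|\sim T/4$ the perturbation is of order $t^2$, which can dominate $-\tfrac{1}{2|b_1|}t^2$ when $|b_1|$ is large (equivalently when $\sigma_1^2+\sigma^2$ is small). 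Your further split at $|t|=T/4$ using exponential type runs into the same difficulty with the lower bound on $f_{X^*}(iy)$.

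The paper bypasses all of this with the one-line ridge inequality: for any real $t$ and $y$,
\[
|f_{X^*}(t+iy)| \;=\; \Bigl|\int e^{i(t+iy)s}\,dF^*(s)\Bigr| \;\le\; \int e^{-ys}\,dF^*(s) \;=\; f_{X^*}(iy),
\]
so $|R(t,x,y)|\le e^{-\sigma^2 t^2/2}$ on the entire tail $|t|\ge T_2=c''T/\sqrt{|b_1|}\ge c''\sigma T$. Then $\int_{|t|\ge T_2}|t|^p e^{-\sigma^2 t^2/2}\,dt$ is bounded using the standing assumption (\ref{2.0}) (resp.\ (\ref{8.0})), yielding $|I_{p2}|\le cT^{-4}$ directly, with no further case distinction. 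Replace your tail argument with this.
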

\begin{proof}
For short we write $y$ in place
of $y(x)$. Put $T_2 = c''T/\sqrt{|b_1|}$ and write
\begin{equation}\notag
\int_{-\infty}^{\infty}
t^p\, \Re R(t,x,y)\,dt = I_{p1}+I_{p2} =
\Big(\int_{-T_2}^{T_2} +\int_{|t|\ge T_2}\Big)t^p\,\Re R(t,x,y)\,dt. 
\end{equation}
First consider the integral $I_{p1}$. We have
\begin{align}
I_{p1} = I_{p1,1} - I_{p1,2} & \equiv 
\int_{-T_2}^{T_2} t^p\,|R(t,x,y)|\,dt\notag\\
&-2\int_{-T_2}^{T_2}t^p|R(t,x,y)|\,
\sin^2\Big(\frac 12\,\Im\log R(t,x,y)\Big)\,dt. \notag
\end{align}
By (\ref{lem7.1.1}), we see that
\begin{equation}\notag
I_{p1,1}=\int_{-T_2}^{T_2} t^p\, e^{-\frac {\gamma(y)}{2}\,t^2}\,dt
+ \int_{-T_2}^{T_2} t^p\,
e^{-\frac {\gamma(y)}{2}\,t^2}\big(e^{r_1(t,x)}-1\big)\,dt.
\end{equation}
Using the inequality $|e^z-1|\le |z|e^{|z|},\,z\in\mathbb C$, and 
applying Lemma~\ref{lem7.1} together with (\ref{lem7.1.1''}), 
(\ref{lem7.1a.1}), (\ref{lem7.1a.2}), we have
\begin{align}\label{lem7.3.1}
&\Big|\int_{-T_2}^{T_2} t^p\,
e^{-\frac {\gamma(y)}{2}\,t^2} \big(e^{r_1(t,y)}-1\big)\,dt\Big| \le
\int_{-T_2}^{T_2} t^p\, 
e^{-\frac {\gamma(y)}{2}t^2}|r_1(t,y)|e^{|r_1(t,y)|}\,dt\notag\\
 & \le \int_{-T_2}^{T_2} t^p\,
 e^{-\frac {1}{4|b_1|}t^2}|r_1(t,y)|\,dt\le 
c\int_{-T_2}^{T_2}t^{p+2}\,
e^{-\frac {1}{4|b_1|}t^2}\frac{t^2+y^2}{T^2}\,dt\notag\\
&\le c|b_1|^{(p+3)/2}(|b_1|+y^2)T^{-2}.
\end{align}
On the other hand
\begin{equation}\label{lem7.3.2}
\int_{-T_2}^{T_2} t^p\,
e^{-\frac{\gamma(y)}{2}t^2}\,dt =
\frac{\sqrt{2\pi}\,p!}{2^{p/2}\,(p/2)!\,\gamma(y)^{(p+1)/2}} -
\int_{|t|\ge T_2} t^p\,e^{-\frac {\gamma(y)}{2}\,t^2}\,dt, 
\end{equation}
where, by (\ref{lem7.1a.1}) and by the assumption (\ref{2.0}) in the proof of 
Theorem~2.1 and by the assumption (\ref{8.0}) in the proof of Theorem~2.2,
\begin{equation}\label{lem7.3.3}
\int_{|t|\ge T_2} t^p\,e^{-\frac {\gamma(y)}{2}t^2}\,dt \le 
\frac{cT_2^{p-1}}{\gamma(y)}e^{-\frac 12(T_2\sqrt{\gamma(y)})^2}
\le c|b_1|^{(3-p)/2}T^{p-1}e^{-c''^2\frac {\gamma(y)}{2|b_1|}T^2}\le cT^{-4}.
\end{equation}
Therefore in view of (\ref{lem7.3.1})--(\ref{lem7.3.3}), we deduce
\begin{equation}\label{lem7.3.4}
I_{p1,1} = \frac{\sqrt{2\pi}\, p!}{2^{p/2}\,(p/2)!\gamma(y)^{(p+1)/2}} +
c\theta\frac{|b_1|^{(p+3)/2}(|b_1|+y^2)}{T^2}.
\end{equation}

Now let us turn to the integral $I_{p1,2}$. By (\ref{lem7.2.1}), 
we have
\begin{align}
|I_{p1,2}|&\le \frac 12\int_{-T_2}^{T_2}|
R(t,x,y)|\,(\Im\log R(t,x,y))^2\,dt\notag\\
&\le 2\int_{-T_2}^{T_2}
|R(t,x,y)|\,\big(t^6 |\psi_{X^*}'(iy)|^2+|r_2(t,x)|^2\big)\,dt.  \notag 
\end{align}
By Lemmas~\ref{lem7.1}--\ref{lem7.2} and by the assumptions 
(\ref{2.0}) and (\ref{8.0}), and by (\ref{lem7.1a.4}), we arrive at the upper bound
\begin{align}
|I_{p1,2}|\le \frac{c}{T^2} \int_{-\infty}^{\infty}
t^6\Big(\frac{t^2+y^2}{T^2}+1\Big) e^{-\frac 1{4|b_1|}t^2}\,dt 
\le\frac{c}{T^2}|b_1|^{7/2}\Big(\frac{|b_1|+y^2}{T^2}+1\Big)\le 
\frac{c|b_1|^{7/2}}{T^2}.\label{lem7.3.5}
\end{align}
It remains to estimate the integral $I_{p2}$. By (\ref{2.0}) in the proof of 
Theorem~2.1 and by (\ref{8.0}) in the proof of Theorem~2.2,
\begin{align}
|I_{p2}|&\le 2\int_{T_2}^{\infty}t^p|R(t,x,y)|\,dt\le 
2\int_{T_2}^{\infty}t^pe^{-\frac{\sigma^2}2t^2}\,dt\notag\\
&\le 2\int_{c''\sigma T}^{\infty}t^pe^{-\frac{\sigma^2}2t^2}\,dt\le 
c \sigma^{p-3}T^{p-1}e^{-(c'')^2\sigma^4T^2}
\le cT^{-4}.\label{lem7.3.6}
\end{align}
The assertion of the lemma follows from (\ref{lem7.3.4})--(\ref{lem7.3.6}).
\end{proof}

Let us return to the definition of the integrals $I_k(x,y(x))$, $k=0,1,2$, 
see (\ref{6.1*}). We note that $I_0(x,y(x))=\frac 1{2\pi}I_0(x)$ and,
by Lemma~\ref{lem7.3}, for $|x|\le c''T_1/|b_1|$,
\begin{equation}\label{7.1}
I_0(x,y(x))=\frac 1{\sqrt{2\pi\gamma(y(x))}}+\frac 1{2\pi}r_0(x),
\end{equation}
where $r_0(x)$ satisfies the inequality (\ref{lem7.3.1'}).

Since for $|x|>c''T_1/|b_1|$ we choose $y(x)=y(\pm c''T_1/|b_1|)$ and
since $|y(x)|\le c''T/|b_1|$ for such $x$, we obtain, using 
Lemmas~\ref{lem7.1} and \ref{lem7.1a}, and the assumptions (\ref{2.0}) or (\ref{8.0}), that
\begin{align}\label{7.2}
& |I_k(x,y(x))|\le 
\int_{|t|\le T_2}|t|^k|R(t,x,y(x))|\,dt +
\int_{|t|>T_2} |t|^k\,|R(t,x,y(x))|\,dt\notag\\
&\le 
\int_{-\infty}^\infty |t|^k\, e^{-\frac {t^2}{4|b_1|}}\,dt+
\int_{|t|>T_2}|t|^k\,e^{-\frac{\sigma^2t^2}2 }\,dt
\le c\Big(|b_1|^{\frac{k+1}2}+T_2^{k-1}\sigma^{-2}
e^{-\frac{\sigma^2T_2^2}{2}}\Big)\le c|b_1|^{\frac{k+1}2}
\end{align}
with some absolute constant $c$. The bound (\ref{7.2}) holds for 
$|x|\le c''T_1/|b_1|$ as well. Thus (\ref{7.2}) is valid for all real $x$.

The relations (\ref{7.1}) and (\ref{7.2}) allow us to control 
the behaviour of the integral $I_0(x,y(x))$.

\section{Estimation of the integrals $I_1(x,y(x))$ and $I_2(x,y(x))$}

In Section 8 we assume that (\ref{8.0}) holds.
As before we use the letter $c$ to denote absolute constants 
which may vary from place to place.

In order to get estimates on $I_1(x,y(x))$ and $I_2(x,y(x))$, which would
be  similar to (\ref{7.1}) and (\ref{7.2}), we need to bound the integral 
$I_1(x)=-\int_{\mathbb R} t\,\Im R(t,x,y(x))\,dt$.
Let us prove the following lemma.

\begin{lemma}\label{lem8.1}
For $|x|\le c''T_1/|b_1|$,
\begin{equation}\notag
-I_1(x) = 
3\sqrt{\frac{\pi}2}\,\gamma(y(x))^{-5/2}\, i\psi'_{X^*}(iy(x))+r_1(x),
\end{equation}
where
\begin{equation}\label{lem8.1.1}
|r_1(x)|\le 
c|b_1|^{7/2}(|b_1|^{1/2}+|y(x)|)\, T^{-2}
\end{equation}
with some absolute constant $c$.
\end{lemma}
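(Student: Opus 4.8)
The plan is to mimic the computation carried out for the even integrals $I_p(x)$ in Lemma~\ref{lem7.3}, now tracking the imaginary part of $R(t,x,y(x))$ rather than its modulus. As in that proof, I would abbreviate $y=y(x)$, set $T_2=c''T/\sqrt{|b_1|}$, and split
\begin{equation}\notag
-I_1(x)=\int_{-\infty}^{\infty} t\,\Im R(t,x,y)\,dt = \Big(\int_{|t|\le T_2}+\int_{|t|>T_2}\Big)t\,\Im R(t,x,y)\,dt.
\end{equation}
The tail $\int_{|t|>T_2}$ is controlled exactly as $I_{p2}$ in (\ref{lem7.3.6}): using $|R(t,x,y)|\le e^{-\sigma^2t^2/2}$ together with assumption (\ref{8.0}) it is $O(T^{-4})$, which is absorbed into $r_1(x)$.

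For the central part, I would write $\Im R = |R|\sin(\Im\log R)$ and use the expansion $\sin u = u - (\sin u - u)$ with the bound $|\sin u - u|\le |u|^3/6$. By Lemma~\ref{lem7.2}, on $|t|\le T_2$ we have $\Im\log R(t,x,y)=\frac{i}{2}t^3\psi'_{X^*}(iy)+r_2(t,x)$ with $|r_2(t,x)|\le c(|t|+|y|)|t|^3T^{-2}$, and by Lemma~\ref{lem7.1} we have $|R(t,x,y)|=e^{-\gamma(y)t^2/2+r_1(t,y)}$ with $|r_1(t,y)|\le t^2/(8|b_1|)$ on the relevant range (Lemma~\ref{lem7.1a}). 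Hence the main term is
\begin{equation}\notag
\int_{-T_2}^{T_2} t\cdot e^{-\gamma(y)t^2/2}\cdot \tfrac{i}{2}t^3\psi'_{X^*}(iy)\,dt
= \tfrac{i}{2}\psi'_{X^*}(iy)\int_{-T_2}^{T_2} t^4 e^{-\gamma(y)t^2/2}\,dt,
\end{equation}
and I would complete the Gaussian integral to the whole line (the error is $O(T^{-4})$ as in (\ref{lem7.3.3})), using $\int_{\mathbb R} t^4 e^{-\gamma t^2/2}\,dt = 3\sqrt{2\pi}\,\gamma^{-5/2}$. Since $\tfrac12\cdot 3 = \tfrac32$ this produces precisely $3\sqrt{\pi/2}\,\gamma(y)^{-5/2}\,i\psi'_{X^*}(iy)$, the claimed leading term. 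Note $i\psi'_{X^*}(iy)$ is real, consistent with $I_1(x)$ being real.

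The remaining contributions go into $r_1(x)$: (i) the factor $e^{r_1(t,y)}-1$ multiplying the main term, estimated via $|e^z-1|\le|z|e^{|z|}$ exactly as in (\ref{lem7.3.1}), giving a bound of order $|b_1|^{(p+3)/2}(|b_1|+y^2)T^{-2}$ with $p=4$ effectively, i.e.\ $O(|b_1|^{7/2}(|b_1|+y^2)T^{-2})$; (ii) the cubic remainder $\int t\,|R|\,O(|\Im\log R|^3)\,dt$, which after inserting Lemma~\ref{lem7.2} and bounding $|R|\le e^{-t^2/(4|b_1|)}$ contributes $O(|b_1|^{?}T^{-?})$ of the same or smaller order; and (iii) the cross term from $r_2(t,x)$ in the linear part of $\sin$, namely $\int t\,|R|\,r_2(t,x)\,dt$, which by $|r_2(t,x)|\le c(|t|+|y|)|t|^3T^{-2}$ and the Gaussian bound is $O(|b_1|^{7/2}(|b_1|^{1/2}+|y|)T^{-2})$ — this is the term dictating the final form (\ref{lem8.1.1}). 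Finally I would divide by $2\pi$ only if needed to match the normalization of $I_1(x,y(x))$, but here the statement is about $I_1(x)$ itself, so no rescaling is required. The main obstacle is purely bookkeeping: keeping the powers of $|b_1|$ and $T$ straight across the several error terms so that they all collapse into the single bound $c|b_1|^{7/2}(|b_1|^{1/2}+|y(x)|)T^{-2}$; there is no new analytic difficulty beyond what Lemmas~\ref{lem7.1}--\ref{lem7.2} already provide.
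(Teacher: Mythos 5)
Your proposal follows essentially the same route as the paper's proof: split the integral at a cutoff inside $[-T/4,T/4]$, expand $\sin(\Im\log R)$ around the cubic term $\frac i2 t^3\psi_{X^*}'(iy)$ supplied by Lemma~\ref{lem7.2}, evaluate the completed Gaussian integral $\int_{\mathbb R}t^4e^{-\gamma t^2/2}\,dt=3\sqrt{2\pi}\,\gamma^{-5/2}$, and sweep everything else into $r_1(x)$. The paper cuts at $T_3=c''T^{1/3}$ rather than your $T_2=c''T/\sqrt{|b_1|}$, but both lie in the range where Lemmas~\ref{lem7.1}--\ref{lem7.2} and the bound $|R|\le e^{-t^2/(4|b_1|)}$ apply, and both tails are negligible under (\ref{8.0}); your $\sin u = u + O(|u|^3)$ decomposition is an equivalent variant of the paper's angle-addition splitting.

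One bookkeeping item as written does not close. In your term (i), the correction $\int t^4 e^{-\gamma t^2/2}\big(e^{r_1(t,y)}-1\big)\,dt$ is indeed $O\big(|b_1|^{7/2}(|b_1|+y^2)T^{-2}\big)$ by the argument of (\ref{lem7.3.1}), but it still carries the prefactor $\frac i2\psi_{X^*}'(iy)$, which you drop. Without that prefactor the bound $|b_1|^{7/2}(|b_1|+y^2)T^{-2}$ is \emph{not} dominated by the target $c|b_1|^{7/2}(|b_1|^{1/2}+|y|)T^{-2}$ (for instance $y^2\gg|y|$ when $|y|$ is large, and $|b_1|\gg|b_1|^{1/2}$ when $|b_1|$ is large). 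Reinstating the prefactor and using $|\psi_{X^*}'(iy)|\le c/T$ from (\ref{lem7.1a.4}), together with $|b_1|+y^2\le cT\,(|b_1|^{1/2}+|y|)$ on the admissible range (since $|b_1|^{1/2}\le T$ and $|y|\le c''T/|b_1|$), supplies exactly the extra factor $T^{-1}$ needed; this is precisely the step the paper carries out in (\ref{lem8.1.4}). With that correction your argument coincides with the paper's.
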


\begin{proof}
Put $T_3 = c''T^{1/3}$ and rewrite $I_1(x)$ in the form
\begin{equation}\label{lem8.1.2}
-I_1(x)=I_{11}+I_{12} \equiv
\Big(\int_{-T_3}^{T_3} +
\int_{|t|>T_3}\Big)\, t\,\Im R(t,x,y(x))\,dt.
\end{equation}
Below in the proof of this lemma we simply write $y$ instead of $y(x)$. 
For $|t|\le T_3$, by Lemma~\ref{lem7.2},
\begin{align}
\sin(\Im \log R(t,x,y)) & =
\sin\Big(\frac i2t^3\psi'_{X^*}(iy)\Big)-
2\sin\Big(\frac i2t^3\psi'_{X^*}(iy)\Big)
\sin^2\Big(\frac 12r_2(t,x)\Big)\notag\\
& +
\cos\Big(\frac i2t^3\psi'_{x^*}(iy)\Big) \sin(r_2(t,x)).\notag
\end{align}
Using $|\sin x|\le |x|$ and $|\sin x-x|\le\frac 16|x|^3$ for 
$x\in\mathbb R$, we have
\begin{equation}\notag
\sin(\Im\log R(t,x,y))=\frac i2\,t^3\psi'_{X^*}(iy)+r_{1,1}(t,x),
\end{equation}
where
\begin{equation}\label{lem8.1.3*}
|r_{1,1}(t,x)| \, \le \, \frac{1}{48}\,|t|^9\,|\psi'_{X^*}(iy)|^3 +
|r_2(t,x)|\,\Big(\frac{1}{4}\,|r_2(t,x)||t^3\psi'_{X^*}(iy)|+1\Big).
\end{equation}
Therefore, one can rewrite the integral $I_{11}$ in the form
\begin{equation}\label{lem8.1.3}
I_{11} = \frac{i}{2}\,\psi'_{X^*}(iy) \int_{-T_3}^{T_3} t^4 |R(t,x,y)| \,dt+
\int_{-T_3}^{T_3} t |R(t,x,y)|\, r_{1,1}(t,x)\,dt,
\end{equation}
where the second integral on the right-hand side of (\ref{lem8.1.3}) 
does not exceed, by (\ref{lem8.1.3*}) and by (\ref{lem7.1a.4}) and (\ref{lem7.1.2}),
the quantity
\begin{equation}\notag
c\int_{-T_3}^{T_3}
\bigg[\frac{t^{10}}{T^3}+\frac{(|t|+|y|)t^4}{T^2}
\Big(\frac{(|t|+|y|)|t|^3}{T^3}+1\Big)\bigg]\,|R(t,x,y)|\,dt
\end{equation}
with some absolute constant $c$. We see, by Lemmas~\ref{lem7.1} and \ref{lem7.1a} 
and by (\ref{8.0}), that 
\begin{equation}
|R(t,x,y)|\le e^{-(\frac 3{8|b_1|}+\frac 1{8|b_1|})t^2}=
e^{-\frac 1{4|b_1|}t^2}\quad \text{for}\quad |t|\le T_3\le c''T/\sqrt{|b_1|},
\quad |y|\le c''T/|b_1|,
\end{equation} 
so, using again (\ref{8.0}),
the above integral does not exceed
\begin{equation}\label{lem8.1.4a}
\frac{c}{T^2}
\Big(|b_1|^{5/2}(|b_1|^{1/2}+|y|)+
\frac{|b_1|^{11/2}}T+\frac{b_1^4(|b_1|+y^2)}{T^3}\Big)\le 
\frac{c}{T^2}|b_1|^{7/2}(|b_1|^{1/2}+|y|).
\end{equation}
Repeating the arguments which 
we used in the proof of (\ref{lem7.3.4}), we easily obtain that
the first summand on the right-hand side of (\ref{lem8.1.3}) is equal to
\begin{align}\label{lem8.1.4}
&3\sqrt{\frac{\pi}2}\gamma(y)^{-5/2}i\psi'_{X^*}(iy)+
c\theta |\psi'_{X^*}(iy)|\frac{|b_1|^{7/2}(|b_1|+y^2)}{T^2}\notag\\
&=3\sqrt{\frac{\pi}2}\gamma(y)^{-5/2}i\psi'_{X^*}(iy)+
c\theta\frac{|b_1|^{7/2}(|b_1|+y^2)}{T^3}.
\end{align}
It remains to estimate the integral 
$I_{12}$ similarly as in the proof of
(\ref{lem7.3.6}). Namely, using the assumption (\ref{8.0}), we obtain that 
\begin{equation}\label{lem8.1.6}
|I_{12}|\le 2\int_{T_3}^{\infty}t|R(t,x,y)|\,dt\le 
2\int_{T_3}^{\infty}te^{-\frac 12\sigma^2 t^2}\,dt=
\frac 2{\sigma^2}e^{-\frac 12\sigma^2 T_3^2}
=\frac 2{\sigma^2}e^{-\frac 12(\sigma c'')^2 T^{2/3}}\le\frac 1{T^4}.
\end{equation}
Taking into account (\ref{8.0}), we see that the assertion of the lemma 
follows immediately from (\ref{lem8.1.4a})--(\ref{lem8.1.6}).
\end{proof}

Recalling the definition of the integrals $I_1(x,y(x))$ and $I_2(x,y(x))$, 
we see that
\begin{align}
I_1(x,y(x)) & =\frac 1{2\pi}\,\big(y(x)I_0(x)-I_1(x)\big),\notag\\
I_2(x,y(x)) = & \frac 1{2\pi}\,
\big(y(x)^2I_0(x)-2y(x)I_1(x)+I_2(x)\big).\label{8.1}
\end{align}
Using Lemma~\ref{lem7.3} and Lemma~\ref{lem8.1}, we conclude from that, 
for $|x|\le c''T_1/|b_1|$,
\begin{align}\label{8.2}
I_2(x,y(x)) &=
\frac 1{\sqrt{2\pi\gamma(y(x))}}\,
\Big(y(x)^2+\frac{3y(x)}{\gamma(y(x))^2}\, i\psi'_{X^*}(iy(x))
-\frac 1{\gamma(y(x))}\Big)\notag\\
&+\frac 1{2\pi}\,(r_0(x)y^2(x)+2r_1(x)y(x)-r_2(x)),
\end{align}
where $r_0(x),r_2(x)$ and $r_1(x)$ admit the upper bounds 
(\ref{lem7.3.1'}), (\ref{lem8.1.1}), respectively.

\section{End of the proof of Theorem~1.1}
Starting from the hypothesis (2.1), we need to derive
a good upper bound for $D(X_{\sigma})$, which is equivalent to bounding
the relative entropy $D(X^*_{\sigma})$, according to Lemma~\ref{lem4}.
This will be done with the help of the relations (\ref{6.12}), (\ref{6.13}) 
and (\ref{7.1}), (\ref{7.2}) for the density $p_{X^*_{\sigma}}(x)$ of the 
random variable $X^*_{\sigma}$. First, let us prove the following lemma.

\begin{lemma}\label{lem9.1}
For $|x|\le c''T_1/|b_1|$,
\begin{equation}\notag
\log\frac{p_{X^*_{\sigma}}(x)}{\varphi_{\sqrt{1/|b_1|}}(x)} =
\frac{c_3}{2T}\, \Big((b_1x)^3+3b_1y(x)\Big) + \tilde{r}(x),
\end{equation}
where with some absolute constant $c$
\begin{equation}\label{lem9.1.1a}
|\tilde{r}(x)|\le\frac c{T^2}\big(b_1^2y(x)^2+|b_1|^{3}+|b_1|^5x^4\big).
\end{equation}
\end{lemma}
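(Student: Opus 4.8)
The plan is to read $p_{X^*_{\sigma}}(x)$ off the representation (\ref{6.12}) with $k=0$, take logarithms, and subtract $\log\varphi_{\sqrt{1/|b_1|}}(x)$. Since $b_1=-(\sigma_1^2+\sigma^2)^{-1}<0$, the convention $\varphi_b(x)=\tfrac1b\varphi(x/b)$ gives $\log\varphi_{\sqrt{1/|b_1|}}(x)=\tfrac12\log\tfrac{|b_1|}{2\pi}-\tfrac12|b_1|x^2=\tfrac12\log\tfrac{|b_1|}{2\pi}+\tfrac12 b_1x^2$, so the quadratic term $\tfrac12 b_1x^2$ of (\ref{6.12}) cancels exactly and
\[
\log\frac{p_{X^*_{\sigma}}(x)}{\varphi_{\sqrt{1/|b_1|}}(x)}
=\frac{c_3}{2T}\,(b_1x)^3+\frac{c\theta b_1^5}{T^2}\,x^4
+\log I_0(x,y(x))-\tfrac12\log\tfrac{|b_1|}{2\pi}.
\]
The first term is exactly the cubic term of the lemma, and the $x^4$ term has admissible size for $\tilde r$; so it remains to show that $\log I_0(x,y(x))-\tfrac12\log\tfrac{|b_1|}{2\pi}$ equals $\tfrac{3c_3}{2T}\,b_1y(x)$ up to an error of the claimed order.

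Into this I would substitute (\ref{7.1}), writing $I_0(x,y(x))=(2\pi\gamma(y(x)))^{-1/2}(1+z(x))$ with $z(x)=\sqrt{\gamma(y(x))/(2\pi)}\;r_0(x)$. By (\ref{lem7.1a.1}) and the bound (\ref{lem7.3.1'}) on $r_0$ one gets $|z(x)|\le c(|b_1|^3+|b_1|\,y(x)^2)T^{-2}$; since $|x|\le c''T_1/|b_1|$ with $T_1=c'(\sigma_1^2+\sigma^2)T=c'T/|b_1|$, hence $|y(x)|\le\tfrac32|b_1x|\le\tfrac32 c'c''\,T/|b_1|$ by (\ref{lem6.11'}), and since assumption (\ref{2.0}) keeps $|b_1|^3/T^2$ small, one checks $|z(x)|\le\tfrac12$. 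Then $\log I_0(x,y(x))=-\tfrac12\log(2\pi\gamma(y(x)))+\log(1+z(x))$ with $|\log(1+z(x))|\le 2|z(x)|$, and combining with the leftover $-\tfrac12\log\tfrac{|b_1|}{2\pi}$ reduces the task to expanding $-\tfrac12\log(|b_1|\,\gamma(y(x)))+\log(1+z(x))$.

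The crux is the expansion of $|b_1|\gamma(y)$. From $\gamma(y)=|b_1|^{-1}+\psi_{X^*}(iy)+2iy\psi'_{X^*}(iy)$ and the Taylor series (\ref{lem5d.7}) of $\psi_{X^*}$, a direct computation of the coefficients yields $\psi_{X^*}(iy)+2iy\psi'_{X^*}(iy)=-\sum_{k\ge3}(-1)^k c_k(2k-3)(y/T)^{k-2}$, whose $k=3$ term equals precisely $3c_3\,y/T$ and whose remaining sum is $O(y^2/T^2)$ with an absolute constant (using $|c_k|\le C$ and that $|y|/T$ is small). This is the step where a sign error or a miscounted combinatorial factor would corrupt the linear-in-$y$ term, so I would carry it out carefully. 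Hence $|b_1|\gamma(y)=1+3c_3|b_1|\,y/T+w$ with $|w|\le C|b_1|\,y^2/T^2$, and both $3c_3|b_1|y/T$ and $w$ are small in modulus over the admissible range of $x$. Expanding $\log(1+\cdot)$ and using $b_1=-|b_1|$,
\[
-\tfrac12\log\big(|b_1|\gamma(y)\big)
=-\frac{3c_3}{2T}|b_1|\,y-\tfrac12 w+O\!\big((|b_1|y/T)^2\big)
=\frac{3c_3}{2T}\,b_1y+O\!\big(b_1^2y^2T^{-2}\big).
\]

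Assembling the pieces, one obtains $\log\frac{p_{X^*_{\sigma}}(x)}{\varphi_{\sqrt{1/|b_1|}}(x)}=\frac{c_3}{2T}\big((b_1x)^3+3b_1y(x)\big)+\tilde r(x)$, where $\tilde r$ collects the term $c\theta b_1^5x^4/T^2$, the term $\log(1+z(x))$ (bounded by $c(|b_1|^3+|b_1|y^2)T^{-2}$), and $-\tfrac12 w$ together with the quadratic logarithm error (each bounded by $cb_1^2y^2T^{-2}$). Since $\sigma_1^2\le v_1^2\le1$ and $\sigma\le1$ force $\sigma_1^2+\sigma^2\le2$, i.e. $|b_1|\ge\tfrac12$, we have $|b_1|\,y^2\le2b_1^2y^2$, so every remainder is absorbed into $\frac{c}{T^2}\big(b_1^2y(x)^2+|b_1|^3+|b_1|^5x^4\big)$ (here $|b_1|^5x^4=|b_1^5x^4|$), which is exactly (\ref{lem9.1.1a}). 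The only real obstacle is the bookkeeping in the third paragraph: pinning down the coefficient $3c_3$ and its sign, and checking that no remainder carries a stray unmatched power of $|b_1|$.
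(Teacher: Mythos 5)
Your proposal is correct and follows essentially the same route as the paper: decompose via (\ref{6.12}) with $k=0$ and (\ref{7.1}), expand $-\tfrac12\log(|b_1|\gamma(y(x)))$ using the series for $\psi_{X^*}(iy)+2iy\psi'_{X^*}(iy)$ to extract the $3c_3 b_1 y(x)/(2T)$ term, and absorb the $\log(1+z)$ and quadratic remainders into $\tilde r$. Your explicit sign/coefficient check $(-1)^{k-1}(2k-3)c_k(y/T)^{k-2}$ agrees with the paper's (\ref{lem9.1.2}) and its $\rho_1$, and the bound $|b_1|\ge\tfrac12$ you invoke to trade $|b_1|y^2$ for $b_1^2y^2$ is valid.
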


\begin{proof}
By (\ref{6.12}) and (\ref{7.1}), we have, for $|x|\le c''T_1/|b_1|$,
\begin{equation}\label{lem9.1.1}
\log\frac{p_{X^*_{\sigma}}(x)}{\varphi_{1/\sqrt{|b_1|}}(x)} =
\frac 12\,c_3b_1^3\, \frac{x^3}T + \frac{c\theta b_1^5}{T^2}\,x^4
-\frac 12\log(|b_1|\gamma(y(x))+
\log\Big(1+\sqrt{\frac {\gamma(y(x))}{2\pi}}r_0(x)\Big).
\end{equation}
Recalling (\ref{lem7.1.1'}) and (\ref{lem5d.7}), we see that
\begin{equation}\label{lem9.1.2}
|b_1|\gamma(y(x))=1+|b_1|(\psi_{X^*}(iy(x))+2iy(x)\psi'_{X^*}(iy(x)))=
1+3c_3|b_1|y(x)T^{-1}+\rho_1(x),
\end{equation}
where 
\begin{equation}\notag
\rho_1(x) \equiv |b_1|\sum_{k=4}^{\infty}
i^k(2k-3)\,c_k\Big(\frac{iy(x)}T\Big)^{k-2}.
\end{equation}
It is easy to see that
\begin{equation}\label{lem9.1.1'}
|\rho_1(x)|\le 8 C|b_1|\Big(\frac{y(x)}T\Big)^2\le \frac 14.
\end{equation}
Since $\frac{|3c_3\,b_1y(x)|}{T} \le \frac{1}{4}$, and using 
$|\log(1+u)-u|\le u^2$ ($|u|\le 1/2$), we get from (\ref{lem9.1.2}) that
\begin{equation}\label{lem9.1.3}
\log(|b_1|\,\gamma(y(x))) =
\frac{3c_3|b_1|y(x)}T+c\theta\Big(\frac{b_1y(x)}T\Big)^2
\end{equation}
with some absolute constant $c$. Now we conclude from  (\ref{lem7.1a.1}) 
and (\ref{lem7.3.1'}) that
\begin{equation}\notag
\sqrt{\frac{\gamma(y(x))}{2\pi}}|r_0(x)|\le 
c\,|b_1|\frac{b_1^{2}+y(x)^2}{T^2}\le \frac 14
\end{equation}
and arrive as before at the upper bound
\begin{equation}\label{lem9.1.4}
\big|\log\Big(1+\sqrt{\frac{\gamma(y(x))}{2\pi}} r_0(x)\Big)\big|\le 
c\,|b_1|\frac {b_1^{2}+y(x)^2}{T^2}.
\end{equation}
Applying (\ref{lem9.1.3}) and (\ref{lem9.1.4}) to (\ref{lem9.1.1}), 
we obtain the assertion of the lemma.
\end{proof}

To estimate the quantity $D(X^*_{\sigma})$, we represent it as
\begin{equation}\label{9.1aa}
D(X^*_{\sigma}) = J_1+J_2 =
\Big(\int_{-c''T_1/|b_1|}^{c''T_1/|b_1|} +
\int_{|x|>c''T_1/|b_1|}\Big)p_{X^*_{\sigma}}(x)
\log\frac{p_{X^*_{\sigma}}(x)}{\varphi_{\sqrt{1/|b_1|}}(x)}\,dx.
\end{equation}

First let us estimate $J_1$. 
using the letters $c,C'$ to denote 
absolute positive constants which may vary from place to place. 
By Lemma~\ref{lem9.1},
\begin{align}
J_1 = \frac {c_3}TJ_{1,1}+J_{1,2} =
\frac {c_3}{2T}\int_{-\frac{c''T_1}{|b_1|}}^{\frac{c''T_1}{|b_1|}}\,
p_{X^*_{\sigma}}(x)\Big((b_1x)^3 + 3b_1y(x)\Big)dx
+\int_{-\frac{c''T_1}{|b_1|}}^{\frac{c''T_1}{|b_1|}} 
p_{X^*_{\sigma}}(x)\tilde{r}(x)\,dx.\label{9.1a}
\end{align}
Using (\ref{6.12}) and (\ref{7.1}), we note that
\begin{align}
&\int_{-c''T_1/|b_1|}^{c''T_1/|b_1|}x^3p_{X^*_{\sigma}}(x)\,dx =
\int_{-c''T_1/|b_1|}^{c''T_1/|b_1|}x^3(p_{X^*_{\sigma}}(x)
- \varphi_{\sqrt{1/|b_1|}}(x))\,dx =
J_{1,1,1}+J_{1,1,2}+J_{1,1,3}\notag\\
& =
\int_{-c''T_1/|b_1|}^{c''T_1/|b_1|}x^3\varphi_{\sqrt{1/|b_1|}}(x)
\Big(\frac 1{\sqrt{\gamma(y(x))|b_1|}}-1\Big)
e^{c_3b_1^3x^3/(2T)+c\theta b_1^5x^4/T^2}\,dx\notag\\
 &+
\int_{-c''T_1/|b_1|}^{c''T_1/|b_1|}x^3\varphi_{\sqrt{1/|b_1|}}(x)
\Big(e^{c_3b_1^3x^3/(2T)+c\theta b_1^5x^4/T^2}-1\Big)\,dx\notag\\
 &+
\frac 1{\sqrt{2\pi |b_1|}}\int_{-c'T_1/|b_1|}^{c''T_1/|b_1|}
x^3\varphi_{\sqrt{1/|b_1|}}(x) 
e^{c_3b_1^3x^3/(2T)+c\theta b_1^5x^4/T^2}r_0(x)\,dx.
\notag
\end{align}
It is easy to see that 
\begin{equation}\label{9.1b}
\frac{|c_3|\,|b_1|^3|x|^3}{2T} + \frac{c|b_1|^5 x^4}{T^2} \le 
\frac{|b_1|x^2}{4} \quad \text{for}\quad |x|\le \frac{c''T_1}{|b_1|}. 
\end{equation}
With the help of (\ref{lem9.1.2})--(\ref{lem9.1.1'}) and 
using the bound $|(1+u)^{-1/2}-1|\le |u|,\,|u| \le \frac{1}{2}$, we get
\begin{equation}\notag
|\,(\gamma(x)|b_1|)^{-1/2}-1|\le c\,\frac{|b_1|\,|y(x)|}{T}.
\end{equation}
The last estimates and (\ref{lem6.11'}) lead to
\begin{equation}\label{9.1}
|J_{1,1,1}|\le \frac{c|b_1|}T
\int_{-\frac{c''T_1}{|b_1|}}^{\frac{c''T_1}{|b_1|}} 
|x|^3 |y(x)|\sqrt{|b_1|}\, e^{-|b_1|x^2/4}\,dx
\le \frac{cb_1^{5/2}}T\int_{-\infty}^\infty x^4e^{-|b_1|x^2/4}\,dx\le 
\frac cT.
\end{equation}
Applying $|e^u-1|\le |u|e^{|u|}$, we have, for $|x|\le c'T_1/|b_1|$,
\begin{equation}\notag
\big|e^{c_3b_1^3x^3/(2T)+c\theta b_1^5x^4/T^2}-1\big|\le
c|b_1|^3|x|^3\Big(\frac{1}{2T}+\frac{b_1^2|x|}{T^2}\Big)\,e^{|b_1|x^2/4}.
\end{equation}
Therefore, we deduce the estimate
\begin{equation}\label{9.2}
|J_{1,1,2}|\le c|b_1|^{7/2}\int_{-\infty}^\infty x^6
\Big(\frac{1}T + \frac {b_1^2|x|}{T^2}\Big)\,e^{-|b_1|x^2/4}\,dx
\le c \Big(\frac{1}{T} + \frac{|b_1|^{3/2}}{T^2}\Big).
\end{equation}
By (\ref{lem6.11'}) and (\ref{lem7.3.1'}), we immediately get
\begin{equation}\label{9.3}
|J_{1,1,3}| 
 \le 
c\,T^{-2} \int_{-c'' T_1/|b_1|}^{c'' T_1/|b_1|}
|x|^3\,\big(|b_1|^{7/2} + |b_1|^{3/2}\, y(x)^2\big)\, e^{-|b_1|\,x^2/4}\,dx 
 \le 
c |b_1|^{3/2}\, T^{-2}.
\end{equation} 
Hence, by (\ref{9.1})--(\ref{9.3}) and (\ref{2.0}),
\begin{equation}\label{9.3a}
\Big|\int_{-c''T_1/|b_1|}^{c''T_1/|b_1|} x^3p_{X^*_{\sigma}}(x)\,dx\Big| 
\le 
c\, \Big(\frac {1}{T}+\frac {|b_1|^{3/2}}{T^2}\Big)\le \frac{c}{T}.
\end{equation} 
In the same way,
\begin{equation}\label{9.3b}
\Big|\int_{-c''T_1/|b_1|}^{c''T_1/|b_1|}x p_{X^*_{\sigma}}(x)\,dx\Big| \le 
c\,\Big(\frac {|b_1|}{T}+\frac {|b_1|^{5/2}}{T^2}\Big)\le \frac{c|b_1|}{T}.
\end{equation}

Recalling (\ref{6.10}), we see that $y(x)=b_1x+c\theta b_1^2x^2/T_1$. 
As a result, using (\ref{9.3a})--(\ref{9.3b}) and the property 
$\Var(X)\le 1$, we come to the upper bound
\begin{equation}\label{9.4a}
|J_{1,1}|\le c\,|b_1|^3 T^{-1}.
\end{equation}

In order to estimate $J_{1,2}$, we employ the inequality (\ref{lem9.1.1a}).
Recalling (\ref{6.12}), (\ref{7.2}) and (\ref{9.1b}), we then have
\begin{equation}\label{9.4}
|J_{1,2}| \le \frac c{T^2}\int_{-c'T_1/|b_1|}^{c'T_1/|b_1|}
\big(b_1^2y(x)^2 + |b_1|^{3} + |b_1|^5 x^4\big)\sqrt{|b_1|}\,
e^{-|b_1|x^2\/4}\,dx
\le c |b_1|^3\, T^{-2}.
\end{equation}
Combining (\ref{9.4a}) and (\ref{9.4}), we arrive at
\begin{equation}\label{9.5}
|J_1|\le c |b_1|^3 T^{-2}.
\end{equation}

Let us estimate $J_2$. From (\ref{6.13}), \ref{7.2}),
we have, for all $|x|>c''T_1/|b_1|$,
\begin{equation}\label{9.6}
p_{X^*_{\sigma}}(x) \le 
C'\sqrt{|b_1|}\, e^{-cT|x|/|b_1|}\le C'\sqrt{|b_1|}\, 
e^{-cc'c''T^2/|b_1|^3}<1.
\end{equation}
Here we also used (\ref{2.0}) and the assumption that 
$0 < \ep \le \ep_0$, where $\ep_0$ is a sufficiently small absolute 
constant. Using (\ref{9.6}) and (\ref{2.0}), we easily obtain
\begin{align}\label{9.7}
J_2&\le -\int_{|x|>c''T_1/|b_1|}p_{X^*_{\sigma}}(x)\log\varphi_{\sqrt {1/|b_1|}}(x)\,dx
=\frac 12\log\frac{2\pi}{|b_1|}
\int_{|x|>c''T_1/|b_1|}p_{X^*_{\sigma}}(x)\,dx\notag\\
&+\frac {|b_1|}2\int_{|x|>c''T_1/|b_1|}x^2p_{X^*_{\sigma}}(x)\,dx
\le C'\sqrt{|b_1|}\int_{|x|>c''T_1/|b_1|}
\frac 12\,(\log(4\pi)+|b_1|x^2)e^{-cT|x|/|b_1|}\,dx\notag\\
&\le C'\,(|b_1|^{3/2}T^{-1}+|b_1|^{3/2}T)\,
e^{-cc'c''T^2/|b_1|^3}\le C'T^{-2}.
\end{align}
Thus, we derive from (\ref{9.5}) and (\ref{9.7}) the inequality
$D(X^*_{\sigma})\le c|b_1|^3T^{-2}$.
Recalling (\ref{lem4.a}) and Lemma~\ref {lem1}, we finally conclude, using (\ref{2.0}), that
\begin{equation}\label{9.8}
D(X_{\sigma})\le 
c \frac{|b_1|^3}{T^2}+c\Big(\frac N {\sigma}\Big)^{3}\sqrt{\varepsilon} \le 
\frac {c}{(v_1^2+\sigma^2)^3 T^2} + 
c \Big(\frac N{\sigma}\Big)^{3}
\sqrt{\varepsilon}\le \frac{c}{(v_1^2+\sigma^2)^3 T^2}.
\end{equation}

An analogous inequality also holds for the random variable $Y_{\sigma}$, 
and so Theorem~1.1 follows from these estimates.

\begin{remark}
Under the assumptions of Theorem~1.1, a stronger inequality than 
(\ref{l1}) follows from (\ref{9.8}), namely 
\begin{equation}
D(X_{\sigma} + Y_{\sigma}) \, \ge \, e^{c\sigma^{-6}\log \sigma} 
\Big[\exp\Big\{-\frac{c}{(\Var(X_{\sigma}))^3\, D(X_{\sigma})}\Big\} +
\exp\Big\{-\frac{c}{(\Var(Y_{\sigma}))^3\, D(Y_{\sigma})}\Big\}\Big] \notag
\end{equation}
with some positive absolute constant $c$.
\end{remark}

\section{Proof of Theorem 1.2}

The proof of Theorem~1.2 is very similar to the proof of Theorem~1.1. 
Therefore, keeping the same notations, we omit some routine calculations. 

Let $J_{st}(X_{\sigma}+Y_{\sigma}) \le \ep < 1$. Then, by (\ref{l4}),
$D(X_{\sigma}+Y_{\sigma}) \le \frac{1}{2}\,\ep$, so that
one can use the previous arguments of the proof of Theorem 1.1. 
Without loss of generality we assume that (\ref{8.0}) holds.
As before, we use the letter $c$ to denote 
absolute constants which may vary from place to place. 

To derive an upper bound for the standardized Fisher information
$J_{st}(X_{\sigma}^*)$, we represent it as
\begin{equation}\label{10.1}
\frac{J_{st}(X_{\sigma}^*)}{(\sigma_1^2 + \sigma^2)}
= J_1 + J_2 =
\bigg(-\int_{-\frac{c''T_1}{|b_1|}}^{\frac{c''T_1}{|b_1|}} -
\int_{|x| > \frac{c''T_1}{|b_1|}}\bigg)\, 
p_{X_{\sigma}^*}''(x)
\log\frac{p_{X^*_\sigma}(x)}{\varphi_{\sqrt{1/|b_1|}}(x)}\,dx,
\end{equation}
which is analogous to (\ref{9.1aa}).

Now, using (\ref{lem7.1a.4}), (\ref{lem7.3.1'}) and (\ref{lem8.1.1}), 
rewrite (\ref{8.2}) in the form, for $|x|\le c''T_1/|b_1|$,
\begin{align}
I_2(x,y(x)) & = Q_1(x) + \frac{c\theta}T\, Q_2(x) + 
\frac{c\theta}{T^2}\,Q_3(x) \equiv
\frac{1}{\sqrt{2\pi\gamma(y(x))}}\, \big(y(x)^2-\gamma(y(x))^{-1}\big)
\notag\\
& + \frac{c\theta}T\frac{y(x)}{\gamma(y(x))^{5/2}} + 
\frac {c\theta}{T^2}\big(|b_1|^{7/2}(y(x)^2+1 +|b_1|^{1/2}|y(x)|)+
|b_1|^{3/2}y(x)^4\big).
\label{10.2}
\end{align}
Recalling (\ref{6.10}), we see that 
$y(x) = b_1x + \frac{c\theta b_1^3x^2}{T}$ and 
$y(x)^2 = b_1^2 x^2 + \frac{c\theta b_1^4x^3}{T}$.
Therefore from these relations and from (\ref{lem9.1.2})--(\ref{lem9.1.1'}), 
we deduce the representation
\begin{equation}\label{10.3}
Q_1(x) = \sqrt{\frac{|b_1|}{2\pi}}\,|b_1|(|b_1|\,x^2-1) +
\frac{c\theta}{T}\, |b_1|^{7/2}|x|(|b_1|x^2+1).
\end{equation}

By Lemma~\ref{lem9.1}, we obtain an analog of (\ref{9.1a}),
\bee
J_1 
 & = &
 -\frac{c_3}{T}\, J_{1,1} - J_{1,2} \\
 & = &
\frac{c_3}{2T}\int_{-\frac{c''T_1}{|b_1|}}^{\frac{c''T_1}{|b_1|}}
p''_{X_{\sigma}^*}(x)\Big((b_1x)^3 +3b_1 y(x)\Big)\,dx +
\int_{-\frac{c''T_1}{|b_1|}}^{\frac{c''T_1}{|b_1|}} 
p''_{X_{\sigma}^*}(x)\tilde{r}(x)\,dx.
\ene
Put
\bee
J_{1,1}
 & = & J_{1,1,1} + J_{1,1,2} \\ 
 & = &
-\frac 12\int_{-\frac{c''T_1}{|b_1|}}^{\frac{c''T_1}{|b_1|}} 
p''_{X_{\sigma}^*}(x) \Big((b_1x)^3+3b_1^2x\Big)\,dx -
\frac{cb_1^4}{T}
\int_{-\frac{c''T_1}{|b_1|}}^{\frac{c''T_1}{|b_1|}} 
p''_{X_{\sigma}^*}(x)\,\theta x^2\,dx.
\ene
Taking into account (\ref{6.12}), (\ref{7.2}) and (\ref{9.1b}), we obtain
\begin{equation}\label{10.4}
|J_{1,1,2}|\le 
c\frac{|b_1|^{11/2}}T \int_{-c''T_1/|b_1|}^{c''T_1/|b_1|}x^2 e^{-|b_1|x^2/4}\,dx\le 
c\frac{b_1^4}T.
\end{equation}
Consider the integral
\begin{equation}\label{10.5}
\tilde{J}_1 =
\frac 12\int_{-\frac{c''T_1}{|b_1|}}^{\frac{c''T_1}{|b_1|}} 
|b_1|(|b_1|x^2-1)
((b_1x)^3+3b_1^2x)\varphi_{\sqrt{1/|b_1|}}(x)\,
e^{\frac{c_3b_1^3x^3}{2T} + \frac{c\theta b_1^5x^4}{T^2}}\,dx. 
\end{equation}
Using (\ref{8.0}), we obtain, for $k=0,1,\dots$, 
\begin{align}
&
\Big|\int_{-c''T_1/|b_1|}^{c''T_1/|b_1|} 
x^{2k+1}\varphi_{\sqrt{1/|b_1|}}(x)\, 
e^{c_3b_1^3x^3/(2T)+c\theta b_1^5x^4/T^2}\,dx\Big|\notag\\
& =
\Big|\int_{-c''T_1/|b_1|}^{c''T_1/|b_1|} 
x^{2k+1} \varphi_{\sqrt{1/|b_1|}}(x)\,
(e^{c_3b_1^3x^3/(2T)+c\theta b_1^5x^4/T^2}-1)\,dx\Big|\notag\\
& \le 
c|b_1|^{1/2}\int_{-c''T_1/|b_1|}^{c''T_1/|b_1|}
|x|^{2k+1}\big(|b_1|^3|x|^3T^{-1}+|b_1|^5x^4T^{-2}\big)\,e^{-|b_1|x^2/4}\,dx
\notag\\
&\le 
c(k)|b_1|^{1-k}\big(T^{-1}+|b_1|^{3/2}T^{-2}\big)\le c(k)|b_1|^{1-k}T^{-1}\notag
\end{align}
with some constants $c(k)$ depending on $k$ only. We easily conclude from 
this estimate that
\begin{equation}\label{10.6}
|\tilde{J}_1|\le c b_1^4\, T^{-1}.
\end{equation}
For the integral
\begin{equation}
\tilde{J}_2 =
\int_{-\frac{c''T_1}{|b_1|}}^{\frac{c''T_1}{|b_1|}}
\frac{\theta}{T}\, |b_1|^3 |x| (|b_1|x^2+1)
((b_1x)^3+3b_1^2x)
\varphi_{\sqrt{1/|b_1|}}(x)\,
e^{\frac{c_3b_1^3x^3}{2T} + \frac{c\theta b_1^5 x^4}{T^2}}\,dx \notag
\end{equation}
we have a straightforward upper bound
\begin{equation}\label{10.7}
|\tilde{J}_2|\le \frac {cb_1^{11/2}}T
\int_{-\frac{c''T_1}{|b_1|}}^{\frac{c''T_1}{|b_1|}}
x^2(|b_1|x^2+1)^2\, e^{-|b_1|x^2/4}\,dx \le  \frac{cb_1^4}{T}.
\end{equation}
From (\ref{10.6}), (\ref{10.7}) we deduce the estimate
\begin{equation}
\frac 1{\sqrt{|b_1|}}\Big|\int_{-\frac{c''T_1}{|b_1|}}^{\frac{c''T_1}{|b_1|}}
Q_1(x)\big((b_1x)^3 +3 b_1^2x\big)
\varphi_{\sqrt{1/|b_1|}}(x)\,
e^{\frac{c_3b_1^3x^3}{2T} + \frac{c\theta b_1^5x^4}{T^2}}\,dx\Big|
\le \frac{cb_1^4}{T}. \label{10.8}
\end{equation}
In the same way,
\begin{equation}\label{10.9}
\frac 1{\sqrt{|b_1|}}\Big|\int_{-\frac{c''T_1}{|b_1|}}^{\frac{c''T_1}{|b_1|}}
Q_2(x)((b_1x)^3+3b_1^2x) \varphi_{\sqrt{1/|b_1|}}(x)\,
e^{\frac{c_3b_1^3x^3}{2T} + \frac{c\theta b_1^5x^4}{T^2}}\,dx\Big|\le 
\frac {cb_1^4}{T}
\end{equation}
and, taking into account (\ref{8.0}),
\begin{equation}\label{10.10}
\frac 1{\sqrt{|b_1|}}\Big|\int_{-\frac{c''T_1}{|b_1|}}^{\frac{c''T_1}{|b_1|}}
Q_3(x)((b_1x)^3+3b_1^2x) \varphi_{\sqrt{1/|b_1|}}(x)\,
e^{\frac{c_3b_1^3x^3}{2T} + \frac{c\theta b_1^5x^4}{T^2}}\,dx\Big|
\le \frac {cb_1^{11/2}}{T^2}\le\frac {cb_1^4}{T}.
\end{equation}
The bounds (\ref{10.8})--(\ref{10.10}) yield $|J_{1,1,1}|\le cb_1^4T^{-1}$, and taking
into account (\ref{10.4}) we have
\begin{equation}\label{10.11}
|J_{1,1}|\le cb_1^4T^{-1}.
\end{equation}
In view of (\ref{6.12}), (\ref{7.2}), (\ref{lem9.1.1a}) 
and (\ref{9.1b}), we get
\begin{equation}\label{10.12}
|J_{1,2}|\le \frac{c|b_1|^{9/2}}{T^2}
\int_{-c''T_1/|b_1|}^{c''T_1/|b_1|}(|b_1|x^2+1+b_1^2x^4)\,e^{-|b_1|x^2/4}\,dx
\le cb_1^4\,T^{-2}. 
\end{equation}

To estimate the integral $J_2$, we use (\ref{lem4.1}), (\ref{6.13}) and
(\ref{7.2}). We have, taking into account (\ref{8.0}),
\begin{align}\label{10.13}
|J_2|&\le 
c|b_1|^{3/2}\int_{|x|>c''T_1/|b_1|}
\Big(\frac{x^2+1}{\sigma^2}+|b_1|x^2+|\log |b_1||\Big)\,  e^{-cT|x|/|b_1|}\,dx\notag\\
 & \le 
c|b_1|^{3/2}\Big(\sigma^{-2}
\Big(\frac T{|b_1|^3}+\frac {|b_1|}T\Big)+\frac T{b_1^2}+
\frac{|b_1||\log |b_1||}{T}\Big)\,
e^{-cc'c''T^2/|b_1|^3}\le cT^{-2}.
\end{align}
Applying (\ref{10.11})--(\ref{10.13}) to (\ref{10.1}), we get
\begin{equation}\notag
J_{st}(X^*_{\sigma})\le  c|b_1|^3T^{-2}.
\end{equation}
Now, by Lemmas~\ref{lem1}, \ref{lem5c} and by (\ref{8.0}), the above 
inequality gives
\begin{equation}\label{10.14}
J_{st}(X_{\sigma})
\le \Big(1+\frac{v_1^2-\sigma_1^2}{\sigma_1^2+\sigma^2}\Big) 
J_{st}(X^*_{\sigma})+c\sigma^{-7}N^3\sqrt{\varepsilon}
\le \frac{C'}{(v_1^2+\sigma^2)^3T^{2}} 
\end{equation}
with some positive absolute constants $c,C'$. An analogous inequality also
holds for the random variable $Y_{\sigma}$, and thus Theorem 1.2 is proved.

\begin{remark}
Under the assumptions of Theorem~1.2, a stronger inequality than (\ref{l6}) 
follows from (\ref{10.14}),
\begin{equation}
J_{st}(X_{\sigma} + Y_{\sigma}) \ge e^{c\sigma^{-6}(\log \sigma)^3}
\Big[\exp\Big\{-\frac{c}{(\Var(X_{\sigma}))^3 J_{st}(X_{\sigma})}\Big\}
+ \exp\Big\{-\frac{c}{(\Var(Y_{\sigma}))^3J_{st}(Y_{\sigma})}\Big\}
\Big]\notag
\end{equation}
with some positive absolute constant $c$.
\end{remark}

\section{Proof of Theorem~1.3}

In order to construct random variables $X$ and $Y$ with the desired 
properties, we need some auxiliary results. We use the letters 
$c,c',\tilde{c}$ (with indices or without) 
to denote absolute positive constants which may vary from place to place, 
and $\theta$ may be any number such that $|\theta| \leq 1$.
First we analyze the function $v_\sigma$ with Fourier transform
$$
f_{\sigma}(t)=\exp\{-(1+\sigma^2)\,t^2/2+it^3/T\}, \qquad t\in\mathbb R
$$ 

\begin{lemma}\label{lem11.1}
If the parameter $T>1$ is sufficiently large and $0\le\sigma\le 2$,
the function $f_{\sigma}$ admits the representation
\begin{equation}\label{lem11.1.1}
f_{\sigma}(t) = \int_{-\infty}^\infty e^{itx} v_{\sigma}(x)\,dx
\end{equation}
with a real-valued infinitely differentiable function $v_{\sigma}(x)$ 
which together with its all derivatives is integrable and satisfies
\begin{align}
v_{\sigma}(x)>0,\qquad\qquad\text{for}&\quad x\le 
(1+\sigma^2)^2\,T/16; \label{lem11.1.2}\\
 |v_{\sigma}(x)|\le e^{-(1+\sigma^2)Tx/32},\quad\text{for}&\quad x\ge 
(1+\sigma^2)^2\,T/16.\label{lem11.1.2'}
\end{align}
In addition, for $|x|\le (1+\sigma^2)^2\,T/16$,
\begin{equation}\label{lem11.1.2''}
C_1 e^{-2(5-\sqrt 7)\,|xy(x)|/4}\le |v_{\sigma}(x)|\le C_2e^{-4|xy(x)|/9},
\end{equation}
where
\begin{equation}\label{lem11.1.3a}
y(x) = \frac{1}{6}\, T\,\big(-(1+\sigma^2)+\sqrt{(1+\sigma^2)^2-12x/T}\,\big).
\end{equation}
The right-hand side of the inequality $(\ref{lem11.1.2''})$ continues to 
hold for all $x\le (1+\sigma^2)^2T/16$.
\end{lemma}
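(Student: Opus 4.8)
The plan is to obtain $v_\sigma$ by Fourier inversion,
$$
v_\sigma(x)=\frac{1}{2\pi}\int_{-\infty}^{\infty}e^{-itx}f_\sigma(t)\,dt,
$$
and to analyze it by a contour shift to the relevant saddle point, in the spirit of Sections~6--8. First note that the restriction of $f_\sigma$ to $\mathbb{R}$ is a Schwartz function — it is $e^{-(1+\sigma^2)t^2/2}$ times $e^{it^3/T}$, whose derivatives grow at most polynomially — so $v_\sigma$ is Schwartz; this gives \eqref{lem11.1.1}, the smoothness of $v_\sigma$ and the integrability of all its derivatives, while $f_\sigma(-t)=\overline{f_\sigma(t)}$ forces $v_\sigma$ to be real-valued. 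Moreover $f_\sigma(t)=e^{g(t)}$ with $g(t)=-\tfrac12(1+\sigma^2)t^2+it^3/T$, which is precisely the normal form \eqref{lem5d.6}--\eqref{lem5d.7} with $a_1=0$, with $\sigma_1^2+\sigma^2$ replaced by $1+\sigma^2$, and with $\psi$ reduced to its cubic term ($c_3=2$, $c_k=0$ for $k\ge4$); in particular $b_1=-1/(1+\sigma^2)$ and the structural features of Section~6 apply, here even globally in $t$ since $f_\sigma$ is entire.

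Next I carry out the shift. The exponent $\phi(t)=-itx+g(t)$ has a saddle on the imaginary axis at $t=iy(x)$, where $y(x)$ solves $\phi'(iy)=0$, i.e. $x+(1+\sigma^2)y+3y^2/T=0$; this is real for $x\le(1+\sigma^2)^2T/12$ and equals \eqref{lem11.1.3a}. For $x\le(1+\sigma^2)^2T/16$ one checks $\gamma(y(x)):=(1+\sigma^2)+6y(x)/T=\sqrt{(1+\sigma^2)^2-12x/T}\ge(1+\sigma^2)/2$ and $y(x)\in[-(1+\sigma^2)T/12,\infty)$, so the line $\{\mathrm{Im}\,t=y(x)\}$ lies inside the strip $\{\mathrm{Im}\,t>-(1+\sigma^2)T/6\}$ on which $|f_\sigma|$ decays uniformly in $\mathrm{Re}\,t$, and the shift is legitimate. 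Since $g$ is a cubic polynomial, substituting $t=iy(x)+s$ and using the saddle equation to cancel the linear term gives the exact identity
$$
v_\sigma(x)=\frac{e^{\Psi(x)}}{2\pi}\int_{-\infty}^{\infty}e^{-\frac12\gamma(y(x))s^2+is^3/T}\,ds,\qquad \Psi(x)=-\tfrac12(1+\sigma^2)y(x)^2-2y(x)^3/T .
$$
The imaginary part of the integrand is odd, so the integral equals $\int e^{-\gamma s^2/2}\cos(s^3/T)\,ds$; bounding $0\le1-\cos z\le z^2/2$ and using $\gamma\ge\tfrac12$ one sees it lies in $[\tfrac12,1]\cdot\sqrt{2\pi/\gamma}$ once $T$ is large. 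Hence $v_\sigma(x)>0$ for $x\le(1+\sigma^2)^2T/16$ — which is \eqref{lem11.1.2} — together with $\tfrac{1}{2\sqrt{2\pi\gamma(y(x))}}e^{\Psi(x)}\le|v_\sigma(x)|\le\tfrac{1}{\sqrt{2\pi\gamma(y(x))}}e^{\Psi(x)}$ (equivalently, quote Lemma~\ref{lem9.1} and \eqref{7.1} specialized to $c_k=0$, $k\ge4$).

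For \eqref{lem11.1.2''} it remains to relate $|\Psi(x)|$ to $|xy(x)|$. The saddle equation gives $xy(x)=-\big((1+\sigma^2)+3y(x)/T\big)y(x)^2$, so with $\beta:=3y(x)/T\in[-(1+\sigma^2)/4,\infty)$,
$$
\frac{|\Psi(x)|}{|xy(x)|}=\frac{(1+\sigma^2)/2+2\beta/3}{(1+\sigma^2)+\beta}=\frac{3(1+\sigma^2)+4\beta}{6\big((1+\sigma^2)+\beta\big)},
$$
which is increasing in $\beta$ with values in $[\tfrac49,\tfrac23)$ on this range. Since $\gamma(y(x))\ge\tfrac12$, the bound $|\Psi(x)|\ge\tfrac49|xy(x)|$ yields the right-hand inequality of \eqref{lem11.1.2''} with $C_2=\pi^{-1/2}$; as this used only $x\le(1+\sigma^2)^2T/16$, it holds on that whole half-line, as the lemma claims. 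For the left-hand inequality, $|\Psi(x)|<\tfrac23|xy(x)|<\tfrac{5-\sqrt7}{2}|xy(x)|$, and the positive gap $\tfrac{5-\sqrt7}{2}-\tfrac23$ absorbs the prefactor $\gamma(y(x))^{-1/2}$: for $\beta\ge0$ one has $\beta^3T^2/9\le|xy(x)|$, so $\gamma(y(x))\le(1+\sigma^2)+2(9|xy(x)|/T^2)^{1/3}$ grows only like a cube root of $|xy(x)|$, while for $\beta<0$ the variable $x$ ranges over a bounded set on which $\gamma(y(x))\in[(1+\sigma^2)/2,\,1+\sigma^2]$ is already bounded below; either way $\gamma(y(x))^{-1/2}e^{-\frac23|xy(x)|}\ge C_1e^{-(5-\sqrt7)|xy(x)|/2}$ with an absolute $C_1>0$.

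Finally, for $x\ge(1+\sigma^2)^2T/16$ I discard the saddle and shift instead to the fixed line $\{\mathrm{Im}\,t=-(1+\sigma^2)T/8\}$ (still inside the strip of uniform decay), so that $|v_\sigma(x)|\le\tfrac{1}{2\pi}e^{-(1+\sigma^2)Tx/8}\int_{-\infty}^{\infty}|f_\sigma(s-i(1+\sigma^2)T/8)|\,ds$; the remaining Gaussian integral equals $\sqrt{8\pi/(1+\sigma^2)}\,e^{3(1+\sigma^2)^3T^2/512}$, and writing $e^{-(1+\sigma^2)Tx/8}=e^{-(1+\sigma^2)Tx/32}e^{-3(1+\sigma^2)Tx/32}$ and using $x\ge(1+\sigma^2)^2T/16$ to bound the last factor by $e^{-3(1+\sigma^2)^3T^2/512}$ gives \eqref{lem11.1.2'} with constant at most $1$. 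The steps needing care — none of them deep — are: checking that both contour shifts are admissible, i.e. pinning down the strip $\{\mathrm{Im}\,t>-(1+\sigma^2)T/6\}$ of uniform decay of $|f_\sigma|$ and that $y(x)$ stays in it for $x\le(1+\sigma^2)^2T/16$; the quantitative positivity of $\int e^{-\gamma s^2/2}\cos(s^3/T)\,ds$, which needs both $T$ large and $\gamma\ge\tfrac12$ and is exactly why the range is cut at $(1+\sigma^2)^2T/16$ rather than at the reality threshold $(1+\sigma^2)^2T/12$; and the constant bookkeeping in the lower bound of \eqref{lem11.1.2''}, where the slack between $\tfrac23$ and $\tfrac{5-\sqrt7}{2}$ must be spent on the $\gamma(y(x))^{-1/2}$ factor uniformly as $x\to-\infty$. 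I expect this last point to require the most careful writing.
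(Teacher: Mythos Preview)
Your approach matches the paper's: Fourier inversion, contour shift to the saddle $y(x)$, a direct estimate of the resulting Gaussian-type integral $\int e^{-\gamma s^2/2}\cos(s^3/T)\,ds$, analysis of the ratio $|\Psi(x)|/|xy(x)|$ for the two-sided bound (the paper parametrizes via $y/x$ rather than $\beta=3y/T$, obtaining the slightly sharper constant $2(5-\sqrt7)/9$ on the restricted interval), and a fixed shift for the tail (the paper takes $y_0=-(1+\sigma^2)T/16$ rather than your $-(1+\sigma^2)T/8$). Note, however, that the lower bound in \eqref{lem11.1.2''} is claimed only for $|x|\le(1+\sigma^2)^2T/16$, on which $\gamma(y(x))\in[(1+\sigma^2)/2,\,(1+\sigma^2)\sqrt7/2]$ is already bounded above and below, so the issue you flag as ``the most careful writing'' --- absorbing the prefactor $\gamma^{-1/2}$ uniformly as $x\to-\infty$ --- does not actually arise.
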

\begin{proof}
Since $f_{\sigma}(t)$ decays very fast at infinity, the function 
$v_{\sigma}$ is given according to the inversion formula by
\begin{equation}\label{lem11.1.3}
v_{\sigma}(x) = \frac{1}{2\pi}\int_{-\infty}^\infty
e^{-ixt}\,f_{\sigma}(t)\,dt,\quad x\in\mathbb R.
\end{equation}
Clearly, it is infinitely many times differentiable and all its 
derivatives are integrable. It remains to prove
(\ref{lem11.1.2})--(\ref{lem11.1.2''}). By the Cauchy theorem, one may
also write
\begin{equation}\label{lem11.1.4}
v_{\sigma}(x) = e^{yx} f_{\sigma}(iy)\,\frac{1}{2\pi}
\int_{-\infty}^\infty e^{-ixt} R_{\sigma}(t,y)\,dt, \quad {\rm where} \ \ 
R_{\sigma}(t,y) = \frac{f_{\sigma}(t+iy)}{f_{\sigma}(iy)},
\end{equation}
for every fixed real $y$. Here we choose $y=y(x)$ according to the 
equality in (\ref{lem11.1.3a}) for $x\le (1+\sigma^2)^2\,T/16$.
In this case, it is easy to see,
\begin{equation}\notag
e^{-ixt}R_{\sigma}(t,y(x)) =
\exp\Big\{-\frac{(1+\sigma^2)t^2}2\Big(1+\frac{6y(x)}{(1+\sigma^2)T}\Big) 
+ i\frac{t^3}T\Big\} \equiv 
\exp\Big\{-\frac{\alpha(x)}{2}\,t^2+i\frac{t^3}T\Big\}.
\end{equation}
Note that $\alpha(x)\ge (1+\sigma^2)/2$ for $x$ as above.

For a better understanding of the behaviour of the integral in the 
right-hand side of (\ref{lem11.1.4}), put
$$
\tilde{I} = \frac{1}{2\pi}
\int_{-\infty}^\infty e^{-ixt} R_{\sigma}(t,y)\,dt
$$
and rewrite it in the form
\begin{equation}\label{lem11.1.5}
\tilde{I}=\tilde{I}_1+\tilde{I}_2 = 
\frac{1}{2\pi} \bigg(\int_{|t|\le T^{1/3}} +
\int_{|t|>T^{1/3}}\bigg)\,e^{-ixt}R_{\sigma}(t,y(x))\,dt.
\end{equation}
Using $|\cos u-1+u^2/2|\le u^4/4!$ ($u\in\mathbb R$), 
we easily obtain the representation
\begin{align}
\tilde{I}_1&=\frac 1{2\pi}\int_{|t|\le T^{1/3}} 
\Big(1-\frac{t^6}{2T^2}\Big)\,e^{-\alpha(x)t^2/2}\,dt +
\frac{\theta}{4!\,T^4}\,\frac 1{2\pi}
\int_{|t|\le T^{1/3}}t^{12}e^{-\alpha(x)t^2/2}\,dt\notag\\
& = \frac 1{\sqrt{2\pi\alpha(x)}}\,
\Big(1 - \frac{15}{2\alpha(x)^3\,T^2} + \frac{c\theta}{\alpha(x)^6 T^4}\Big)
-\frac 1{2\pi}\int_{|t|>T^{1/3}}\Big(1-\frac{t^6}{2T^2}\Big)\,
e^{-\alpha(x)t^2/2}\,dt.
\end{align}
The absolute value of last integral does not exceed 
$c(T^{1/3}\alpha(x))^{-1}e^{-\alpha(x)T^{2/3}/2}$. The integral 
$\tilde{I}_2$ admits the same estimate.
Therefore, we obtain from (\ref{lem11.1.5}) the relation
\begin{equation}\label{lem11.1.6}
\tilde{I} = \frac{1}{\sqrt{2\pi\alpha(x)}}
\Big(1-\frac {15}{2\alpha(x)^3\,T^2}+\frac{c\theta}{\alpha(x)^6\,T^4}\Big).
\end{equation}
Applying (\ref{lem11.1.6}) in (\ref{lem11.1.4}), we deduce for the 
half-axis $x\le(1+\sigma^2)^2\,T/16$, the formula 
\begin{equation}\label{lem11.1.7}
v_{\sigma}(x) =
\frac{1}{\sqrt{2\pi\alpha(x)}}\Big(1-\frac{15}{2\alpha(x)^3\,T^2} +
\frac{c\theta}{\alpha(x)^6\,T^4}\Big)\, e^{y(x)x}f_{\sigma}(iy(x)).
\end{equation}
We conclude immediately from 
(\ref{lem11.1.7}) that (\ref{lem11.1.2}) holds. To prove
(\ref{lem11.1.2'}), we use (\ref{lem11.1.4}) with 
$y = y_0 = -(1+\sigma^2)T/16$ and, noting that
\begin{equation}\notag
x + \frac{1+\sigma^2}2y_0\ge \frac{x}{2}\quad \text{for}\quad 
x\ge \frac{(1+\sigma^2)^2}{16}\,T,
\end{equation}
we easily deduce the desired estimate
\begin{equation}\notag
|v_{\sigma}(x)|\le e^{-(1+\sigma^2)Tx/32}\frac 1{2\pi}
\int_{-\infty}^\infty e^{-5(1+\sigma^2)^2t^2/16}\,dt \le 
e^{-(1+\sigma^2)Tx/32}.
\end{equation}

Finally, to prove (\ref{lem11.1.2''}), we apply the formula 
(\ref{lem11.1.7}). Using the explicit form of $y(x)$, write
\begin{equation}\label{lem11.1.8}
e^{y(x)x}f_{\sigma}(iy(x)) =
\exp\Big\{y(x)x+\frac{1+\sigma^2}{2}\,y^2(x)+\cfrac{y(x)^3}T\Big\}
=\exp\Big\{\frac {y(x)}3(2x+\frac{1+\sigma^2}2y(x))\Big\}
\end{equation}
for $x\le \frac{(1+\sigma^2)^2}{16}\,T$. Note that the function 
$y(x)/x$ is monotonically decreasing from zero to 
$-\frac{4}{3}\,(1+\sigma^2)^{-1}$ and is equal to
$= -\frac{8}{3}\,\big(-1+\sqrt{\frac{7}{4}}\,\big)\,(1+\sigma^2)^{-1}$
at the point $x=-\frac{(1+\sigma^2)^2}{16}\,T$. Using these properties 
in (\ref{lem11.1.8}), we conclude that in the interval
$|x|\le \frac{(1+\sigma^2)^2}{16}\,T$,
\begin{equation}\label{lem11.1.9}
e^{-2(5-\sqrt 7)|y(x)x|/9}\le e^{y(x)x}f_{\sigma}(iy(x))\le e^{-4|y(x)x|/9},
\end{equation}
where the right-hand side continues to hold for all 
$x\le\frac{(1+\sigma^2)^2}{16}\,T$. The inequalities in (\ref{lem11.1.2''}) 
follow immediately from (\ref{lem11.1.7}) and (\ref{lem11.1.9}).
\end{proof}

\vskip2mm
Now, introduce independent identically distributed random variables 
$U$ and $V$ with density 
\begin{equation}
p(x) = d_0 v_0(x)\,I_{(-\infty,T/16]}(x),\quad d_0=1/\int_{-\infty}^{T/16}v_0(u)\,du,
\end{equation}
where $I_A$ denotes the indicator function of a set $A$.
The density $p$ depends on $T$, but for simplicity we omit this parameter.
Note that, by Lemma~\ref{lem11.1.1}, $|1-d_0|\le e^{-cT^2}$.

Consider the regularized random variable $U_{\sigma}$ with density 
$p_{\sigma} = p*\varphi_{\sigma}$, which we represent in the form
\begin{equation}\notag
p_{\sigma}(x) = d_0v_{\sigma}(x)-w_{\sigma}(x),\quad\text{where}\quad
w_{\sigma}(x) = 
d_0\,((v_0I_{(T/16,\infty)})*\varphi_{\sigma})(x).
\end{equation}
The next lemma is elementary, and we omit its proof.

\begin{lemma}\label{lem11.1'}
We have
\begin{align}
|w_{\sigma}(x)| & \le
\varphi_{\sigma}(|x|+T/16)\,e^{-cT^2}, \qquad\qquad x\le 0,\notag\\
|w_{\sigma}(x)|&\le e^{-cT^2},\qquad\qquad\qquad \qquad \qquad \quad 0<x\le T/16,\notag\\
|w_{\sigma}(x)|&\le e^{-cTx},\qquad\qquad\qquad \qquad \qquad \quad x>T/16.\notag
\end{align}
\end{lemma}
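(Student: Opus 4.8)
The plan is to estimate the convolution integral defining $w_{\sigma}$ directly, treating separately the three ranges $x\le 0$, $0<x\le T/16$, and $x>T/16$. I would first record the factorization $f_{\sigma}(t)=e^{-\sigma^2t^2/2}\cdot e^{-t^2/2+it^3/T}$, whose first factor is the Fourier transform of $\varphi_{\sigma}$ and whose second factor is the Fourier transform of $v_0$ (the $\sigma=0$ instance of Lemma~\ref{lem11.1}); by the convolution theorem $v_{\sigma}=v_0*\varphi_{\sigma}$, and splitting $v_0=v_0 I_{(-\infty,T/16]}+v_0 I_{(T/16,\infty)}$ in this identity gives $d_0 v_{\sigma}=p_{\sigma}+w_{\sigma}$, i.e.\ the decomposition stated above with $w_{\sigma}(x)=d_0\int_{T/16}^{\infty}v_0(u)\,\varphi_{\sigma}(x-u)\,du$. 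Since $|1-d_0|\le e^{-cT^2}$, we may use $d_0\le 2$ throughout. The only external ingredient is the tail bound $|v_0(u)|\le e^{-Tu/32}$ for $u\ge T/16$, which is the $\sigma=0$ case of (\ref{lem11.1.2'}); it also yields $\int_{T/16}^{\infty}|v_0(u)|\,du\le\frac{32}{T}e^{-T^2/512}$, and together with the trivial bound $\varphi_{\sigma}(x-u)\le\frac{1}{\sigma\sqrt{2\pi}}$ these are all the estimates required.

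For $x\le 0$ I would factor the Gaussian kernel: writing $u=T/16+s$ with $s\ge 0$ and $a=T/16+|x|\ge 0$, one has $u-x=a+s$, so $(u-x)^2\ge a^2+s^2$ and hence $\varphi_{\sigma}(x-u)\le\varphi_{\sigma}(|x|+T/16)\,e^{-s^2/(2\sigma^2)}$; combining this with $e^{-Tu/32}=e^{-T^2/512}e^{-Ts/32}$ and $\int_0^{\infty}e^{-Ts/32}\,ds=32/T$ gives $|w_{\sigma}(x)|\le\varphi_{\sigma}(|x|+T/16)\,e^{-cT^2}$ for $T$ large, which is the first claim. For $0<x\le T/16$ it is enough to use $\varphi_{\sigma}(x-u)\le\frac{1}{\sigma\sqrt{2\pi}}$ together with the total-mass estimate, giving $|w_{\sigma}(x)|\le\frac{C}{\sigma T}e^{-T^2/512}$. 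Finally, for $x>T/16$ I would split the $u$-integral (which runs over $[T/16,\infty)$) at $u=x/2$: on the part $u\le x/2$ one has $|x-u|\ge x/2$, so $\varphi_{\sigma}(x-u)\le\varphi_{\sigma}(x/2)$ and, using the mass bound once more, this contributes at most $\frac{C}{\sigma T}e^{-T^2/512-x^2/(8\sigma^2)}$; on the part $u\ge x/2$ one has $e^{-Tu/32}\le e^{-Tx/64}$, so this contributes at most $2\,e^{-Tx/64}$.

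The rest is bookkeeping. For $x>T/16$ and $0<\sigma\le 1$ one has $x^2/(8\sigma^2)\ge Tx/128$, so both pieces in the third range are dominated by $e^{-cTx}$ once $T$ is large, and the middle-range bound $\frac{C}{\sigma T}e^{-T^2/512}$ collapses to $e^{-cT^2}$ in the same way. I do not expect a genuine obstacle: the whole estimate is driven by the Gaussian tail of $\varphi_{\sigma}$ away from $[T/16,\infty)$ and the exponential tail of $v_0$ past $T/16$. The one point deserving a word of care is the algebraic prefactor $1/(\sigma T)$, coming from $\varphi_{\sigma}(0)=\frac{1}{\sigma\sqrt{2\pi}}$; since in Theorem~1.3 the noise level $\sigma\in(0,1]$ is fixed while $T\to\infty$, this factor is harmless and is absorbed into the exponential, so the constants $c$ may be taken absolute up to dependence on $\sigma$ — which is consistent with the $c(\sigma)$ appearing in the statement of Theorem~1.3.
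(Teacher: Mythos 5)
Your proof is correct; the paper itself declares this lemma elementary and omits the proof, so there is nothing to compare against, but your argument is a complete and accurate verification: the identity $v_\sigma=v_0*\varphi_\sigma$ from the factorization of $f_\sigma$, the tail bound $|v_0(u)|\le e^{-Tu/32}$ for $u\ge T/16$ from the $\sigma=0$ case of Lemma~\ref{lem11.1}, and the three range-by-range estimates all check out. One small refinement worth making: in the middle range and in the near part of the third range you bound $\varphi_\sigma(x-u)$ by its supremum $\frac{1}{\sigma\sqrt{2\pi}}$ and integrate $|v_0|$, which is what produces the $1/(\sigma T)$ prefactor you then have to absorb by letting the threshold on $T$ depend on $\sigma$. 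If instead you bound $|v_0(u)|\le e^{-T^2/512}$ pointwise on $u\ge T/16$ (resp.\ $|v_0(u)|\le e^{-Tx/64}$ on $u\ge x/2$) and integrate the Gaussian kernel, whose total mass is $1$, the prefactor disappears entirely and the constants become absolute and uniform in $0<\sigma\le 2$ — which matches the paper's convention in Section~11 that $c$ denotes an absolute constant, and avoids any worry about how the lemma is later invoked inside Lemmas~\ref{lem11.2} and~\ref{lem11.3}.
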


\begin{lemma}\label{lem11.2}
For all sufficiently large $T>1$ and $0<\sigma\le 2$,
\begin{equation}\notag
D(U_{\sigma})= \frac{3}{(1+\sigma^2)^3\,T^2}+\frac {c\theta}{T^3}.
\end{equation}
\end{lemma}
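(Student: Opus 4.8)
Write $b=\sqrt{1+\sigma^{2}}$, so that the asserted identity reads $D(U_\sigma)=\tfrac{3}{b^{6}T^{2}}+\tfrac{c\theta}{T^{3}}$. The plan is to treat $v_\sigma$ as an Edgeworth-type perturbation of the Gaussian density $\varphi_b$ and to extract $D(U_\sigma)$ from the expansion of $\log(v_\sigma/\varphi_b)$; the leading term will come out as the classical $\kappa_3^{2}/(12\kappa_2^{3})$ with $\kappa_2=b^{2}$ and $\kappa_3=-6/T$. First I would reduce to a Gaussian comparison integral: a density $u$ with mean $a$ and variance $\beta^{2}$ satisfies $D(U)=\int u\log u\,dx+\tfrac12\log(2\pi e\beta^{2})$. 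Since the characteristic function of $U$ equals $d_0 f_0$ minus the Fourier integral of $v_0$ over $\{x>T/16\}$, which is $O(e^{-cT^{2}})$ by $(\ref{lem11.1.2'})$, and $|1-d_0|\le e^{-cT^{2}}$, while the cumulants of $f_0(t)=\exp\{-t^{2}/2+it^{3}/T\}$ are $\kappa_1=0$, $\kappa_2=1$, $\kappa_3=-6/T$, $\kappa_n=0$ $(n\ge4)$, one gets $\E U_\sigma=O(e^{-cT^{2}})$ and $\Var(U_\sigma)=1+\sigma^{2}+O(e^{-cT^{2}})$. Hence, passing also from the Gaussian of matching parameters to $\varphi_b$ (using $\E U_\sigma^{2}=b^{2}+O(e^{-cT^{2}})$) and from $p_\sigma=d_0v_\sigma-w_\sigma$ to $v_\sigma$ (using Lemma~$\ref{lem11.1'}$, and that on the mass-carrying range $w_\sigma/v_\sigma\le e^{-cT^{2}}$ by Lemma~$\ref{lem11.1'}$ and the lower bound in $(\ref{lem11.1.2''})$),
$$
D(U_\sigma)=\int_{x\le b^{4}T/16} v_\sigma(x)\log\frac{v_\sigma(x)}{\varphi_b(x)}\,dx+O(e^{-cT^{2}}),
$$
the integral being over the half-line where $v_\sigma>0$ by $(\ref{lem11.1.2})$, the complement contributing $O(e^{-cT^{2}})$ by $(\ref{lem11.1.2'})$.

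Next I would localize the integral and expand. By $(\ref{lem11.1.3a})$, $|y(x)|\ge c|x|$ for $|x|\le b^{4}T/16$, so $|v_\sigma(x)|\le C_2e^{-c|xy(x)|}$ there by $(\ref{lem11.1.2''})$; since also $|\log(v_\sigma/\varphi_b)|=O(x^{2}+1)$ on this range (from the two-sided bounds in $(\ref{lem11.1.2''})$ and the form of $\varphi_b$), the integral above is, up to a negligible error, carried by $|x|\le T^{1/7}$. On that range, inserting $(\ref{lem11.1.7})$ together with $y(x)=-x/b^{2}+O(x^{2}/T)$ and $\alpha(x)=b^{2}+6y(x)/T$ and expanding in powers of $1/T$ yields the Edgeworth representation $v_\sigma=\varphi_b+\tfrac1T\varphi_b^{(3)}+\tfrac1{2T^{2}}\varphi_b^{(6)}+O(T^{-3})$, the remainder being $\varphi_b$ times a fixed power of $(1+|x|)$ divided by $T^{3}$; the same is obtained directly by inverting $f_\sigma(t)=e^{-b^{2}t^{2}/2}\big(1+\tfrac{it^{3}}{T}-\tfrac{t^{6}}{2T^{2}}+O(t^{9}/T^{3})\big)$.

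Now I would carry out the computation. With $\varphi_b^{(3)}/\varphi_b=-(x^{3}-3b^{2}x)/b^{6}$ and $\varphi_b^{(6)}/\varphi_b=\mathrm{He}_6(x/b)/b^{6}$, where $\mathrm{He}_6(u)=u^{6}-15u^{4}+45u^{2}-15$, setting $L(x)=\log(v_\sigma(x)/\varphi_b(x))$ and using $\log(1+z)=z-z^{2}/2+O(z^{3})$ give, on $|x|\le T^{1/7}$,
$$
v_\sigma L=\varphi_b\Big(-\frac{x^{3}-3b^{2}x}{b^{6}T}+\frac{\mathrm{He}_6(x/b)}{2b^{6}T^{2}}+\frac{(x^{3}-3b^{2}x)^{2}}{2b^{12}T^{2}}+O(T^{-3})\Big),
$$
with polynomially bounded remainder. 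Integrating over $\R$ and invoking $\int\varphi_b(x)(x^{3}-3b^{2}x)\,dx=0$, $\int\varphi_b(x)\mathrm{He}_6(x/b)\,dx=0$ and $\int\varphi_b(x)(x^{3}-3b^{2}x)^{2}\,dx=6b^{6}$, the order-$1/T$ contribution drops out, the two order-$1/T^{2}$ contributions add up to $3/(b^{6}T^{2})$, and the remainder integrates against the Gaussian weight to $O(T^{-3})$. Adding the errors accumulated in the previous steps yields $D(U_\sigma)=\tfrac{3}{(1+\sigma^{2})^{3}T^{2}}+\tfrac{c\theta}{T^{3}}$.

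The hard part will be the uniform control of the remainders: one must check that the $O(T^{-3})$ term in the expansion of $v_\sigma$, after multiplication by the polynomially growing factor $\log(v_\sigma/\varphi_b)$ and integration against $\varphi_b$, still contributes only $O(T^{-3})$ — here it is essential to integrate before estimating, since the pointwise remainder is not small for $|x|\asymp T^{1/7}$ — and that the region $T^{1/7}\le|x|\le b^{4}T/16$, where the Edgeworth expansion is no longer accurate but the bounds $(\ref{lem11.1.2''})$ and $(\ref{lem11.1.2'})$ remain available, contributes only negligibly. The structural reason the leading term has size $T^{-2}$ rather than $T^{-1}$ is the vanishing of the order-$1/T$ contribution by parity and Hermite orthogonality, mirroring the classical relation $D\sim\kappa_3^{2}/(12\kappa_2^{3})$.
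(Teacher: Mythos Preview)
Your proposal is correct and reaches the same result, but the core computation follows a somewhat different route from the paper's. Both arguments make the same reductions (replacing $p_\sigma$ by $v_\sigma$ via Lemma~\ref{lem11.1'}, and the matching Gaussian by $\varphi_{\sqrt{1+\sigma^2}}$ up to $O(e^{-cT^2})$). From there the paper works on the wide interval $|x|\le c'T$ and uses the saddle-point formula~(\ref{lem11.1.7}) to expand $v_\sigma(x)=g(x)\varphi_{\sqrt{1+\sigma^2}}(x)e^{h(x)}$ with explicit polynomials $g,h$ (cf.\ (\ref{lem11.2.2})--(\ref{lem11.2.4})); it then integrates the resulting polynomial expression term by term, and the constant $3/(1+\sigma^2)^3$ emerges from a cancellation among several explicit Gaussian moments. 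You instead localize to the narrow window $|x|\le T^{1/7}$, use the clean Edgeworth expansion $v_\sigma=\varphi_b+\tfrac{1}{T}\varphi_b^{(3)}+\tfrac{1}{2T^2}\varphi_b^{(6)}+O(T^{-3})$ obtained by inverting $f_\sigma$ directly, and extract the leading term via Hermite orthogonality: $\int\varphi_b^{(3)}=\int\varphi_b^{(6)}=0$ and $\int(\varphi_b^{(3)})^2/\varphi_b=6/b^6$. This is more structural and makes transparent the identification with $\kappa_3^2/(12\kappa_2^3)$. The price is that your localization window must be small enough to keep $|x|^3/T$ bounded (so that $\log(1+z)$ can be expanded), and you then rely on the two-sided bounds~(\ref{lem11.1.2''}) to kill the intermediate range $T^{1/7}\le|x|\le b^4T/16$; the paper's saddle-point representation, being valid throughout $|x|\le c'T$, avoids this extra step at the cost of a longer explicit calculation. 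Your caveat about integrating before estimating the $O(T^{-3})$ remainder is exactly right and is the only point requiring care.
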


\begin{proof} 
Put $\E U_{\sigma} = a_{\sigma}$ and $\Var (U_{\sigma})=b_{\sigma}^2$. 
By Lemma~\ref{lem11.1'},
$|a_{\sigma}|+|b_{\sigma}^2-1-\sigma^2|\le e^{-cT^2}$. Write
\begin{align}
&D(U_{\sigma})=\tilde{J}_1+\tilde{J}_2+\tilde{J}_3 =
d_0\int_{|x|\le c'T}v_{\sigma}(x)
\log\frac{p_{\sigma}(x)}{\varphi_{a_{\sigma},b_{\sigma}}(x)}\,dx\notag\\
&-\int_{|x|\le c'T}w_{\sigma}(x)
\log\frac{p_{\sigma}(x)}{\varphi_{a_{\sigma},b_{\sigma}}(x)}\,dx +
\int_{|x|>c'T}p_{\sigma}(x)
\log\frac{p_{\sigma}(x)}{\varphi_{a_{\sigma},b_{\sigma}}(x)}\,dx,\label{lem11.2.1}
\end{align}
where $c'>0$ is a sufficiently small absolute constant.
First we find lower and upper bounds of $\tilde{J}_1$, which are based 
on some additional information about $v_{\sigma}$.

Using a Taylor expansion for the function $\sqrt{1-u}$ about zero
in the interval $-\frac{3}{4} \le u \le \frac{3}{4}$, we easily obtain, 
for $|x|\le (1+\sigma^2)^2T/16$,
\bee
\frac{6y(x)}{(1+\sigma^2)T}
 & = &
-1+\sqrt{1-\frac {12x}{(1+\sigma^2)^2}T} \\
 & = &
-\frac{6x}{(1+\sigma^2)^2T} -\frac{18x^2}{(1+\sigma^2)^4T^2} -
\frac{108 x^3}{(1+\sigma^2)^6T^3}+\frac{c\theta x^4}{T^4},
\ene
which leads to the relation
\begin{equation}\label{lem11.2.2}
y(x)x + \frac{1+\sigma^2}{2}\, y(x)^2+\frac{y(x)^3}T=
-\frac{x^2}{2(1+\sigma^2)}-\frac{x^3}{(1+\sigma^2)^3T}
-\frac {9x^4}{2(1+\sigma^2)^5T^2}+
\frac{c\theta x^5}{T^3}.
\end{equation}
In addition, it is easy to verify that
\begin{equation}\label{lem11.2.3}
\alpha(x)=
(1+\sigma^2)\Big(1-\frac{6x}{(1+\sigma^2)^2T} -
\frac{18x^2}{(1+\sigma^2)^4T^2}+\frac{c\theta x^3}{T^3}\Big).
\end{equation}
Finally, using (\ref{lem11.2.2}) and (\ref{lem11.2.3}), 
we conclude from (\ref{lem11.1.7}) that $v_\sigma$ is representable as
\begin{align}
v_{\sigma}(x)&=g(x)\varphi_{\sqrt{1+\sigma^2}}(x)e^{h(x)}\notag \\
 & =
\Big(1+\frac{3x}{(1+\sigma^2)^2T}+\frac{15}2\,
\frac{3x^2-(1+\sigma^2)}{(1+\sigma^2)^4T^2}
+\frac{c\theta |x|(1+x^2)}{T^3}\Big) \notag\\
 & \qquad\qquad\varphi_{\sqrt{1+\sigma^2}}(x) \exp\Big\{-\frac{x^3}{(1+\sigma^2)^3T}
-\frac{9x^4}{2(1+\sigma^2)^5T^2}+ \frac{c\theta x^5}{T^3}\Big\}\label{lem11.2.4}
\end{align}
for $|x|\le (1+\sigma^2)^2T/16$.

Now, from (\ref{lem11.2.4}) and Lemma~\ref{lem11.1'}, we obtain 
a simple bound 
\begin{equation}\label{lem11.2.4'}
|w_{\sigma}(x)/v_{\sigma}(x)|\le 1/2\quad\text{ for}\quad |x|\le c'T.
\end{equation}
Therefore we have this relation, using again Lemma~\ref{lem11.1} and 
Lemma~\ref{lem11.1'},
\begin{align}
\tilde{J}_1
&=\int_{|x|\le 
c'T}v_{\sigma}(x)\log\frac{v_{\sigma}(x)}{\varphi_{a_{\sigma},b_{\sigma}}(x)}\,dx
+2\theta\int_{|x|\le c'T}|w_{\sigma}(x)|\,dx\notag\\
&=\int_{|x|\le c'T}
v_{\sigma}(x)\log\frac{v_{\sigma}(x)}{\varphi_{\sqrt{1+\sigma^2}}(x)}\,dx 
+ \theta e^{-cT^2}. \label{lem11.2.5}
\end{align}
Let us denote the integral on the right-hand side of (\ref{lem11.2.5}) 
by $\tilde{J}_{1,1}$. With the help of (\ref{lem11.2.4}) it is not 
difficult to derive the representation
\begin{align}
\tilde{J}_{1,1}=\int_{|x|\le c'T}&\varphi_{\sqrt{1+\sigma^2}}(x)e^{h(x)}
\Big(-\frac{x^3}{(1+\sigma^2)^3T}-\frac{15x^4}{2(1+\sigma^2)^5T^2}\notag\\
&+\frac{3x}{(1+\sigma^2)^2T}+\frac{54x^2-15(1+\sigma^2)}{2(1+\sigma^2)^4T^2}+
\frac{c\theta|x|(1+x^4)}{T^3}\Big)\,dx.
\label{lem11.2.6}
\end{align}
Since 
\begin{equation}\notag
|e^{h(x)}-1-h(x)|\le\frac 12h(x)^2e^{|h(x)|},
\end{equation}
and 
$\varphi_{\sqrt{1+\sigma^2}}(x)e^{h(x)}\le \sqrt{\varphi_{\sqrt{1+\sigma^2}}(x)}$ 
for  $|x|\le c'T$,
we easily deduce from (\ref{lem11.2.6}) that
\begin{align}
\tilde{J}_{1,1}=
\int_{|x|\le c'T}&\varphi_{\sqrt{1+\sigma^2}}(x)
\Big(\frac{3(1+\sigma^2)x-x^3}{(1+\sigma^2)^3T}
+\frac{54x^2-15(1+\sigma^2)}{2(1+\sigma^2)^4T^2}\notag\\
&-\frac{21(1+\sigma^2)x^4-2x^6}{2(1+\sigma^2)^6T^2}\Big)\,dx+
\frac{c\theta}{T^3}=\frac{3}{(1+\sigma^2)^3T^2}
+\frac{c\theta}{T^3}. \label{lem11.2.7}
\end{align}

It remains to estimate the integrals $\tilde{J}_2$ and $\tilde{J}_3$. 
By (\ref{lem11.2.4'}) and Lemma~\ref{lem11.1'},
\begin{equation}\label{lem11.2.8}
|\tilde{J}_2|\le\int_{|x|\le c'T}|w_{\sigma}(x)|
(-\log\varphi_{a_{\sigma},b_{\sigma}}(x)+
\log \frac 32+|\log v_{\sigma}(x)|)\,dx
\le \tilde{c}T^3e^{-cT^2}\le e^{-cT^2},
\end{equation}
while by Lemma~\ref{lem11.1} and Lemma~\ref{lem11.1'}, 
\begin{align}\label{lem11.2.9}
|\tilde{J}_3|&\le \int_{|x|>c'T}(|v_{\sigma}(x)|+|w_{\sigma}(x)|)
(\sqrt{2\pi}b_{\sigma} + \frac{x^2}{2b_{\sigma}^2}+|\log(|v_{\sigma}(x)
+|w_{\sigma}(x)|)\,dx\notag\\
&\le \tilde{c}\int_{|x|>c'T}(1+x^2)e^{-cT|x|}dx +
\int_{|x|>c'T}(|v_{\sigma}(x)|+|w_{\sigma}(x)|)^{1/2}\,dx\le e^{-cT^2}.
\end{align}
The assertion of the lemma follows from 
(\ref{lem11.2.7})--(\ref{lem11.2.9}).
\end{proof}

\vskip2mm
To complete the proof of Theorem~1.3, we need yet another lemma.

\begin{lemma}\label{lem11.3}
For all sufficiently large $T>1$ and $0<\sigma\le 2$, we have
$$
D(U_{\sigma}-V_{\sigma})\le e^{-cT^2}.
$$

\end{lemma}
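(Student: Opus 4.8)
The plan is to exploit the fact that, for the i.i.d.\ pair $(U,V)$, passing to the difference $U_\sigma-V_\sigma$ cancels the cubic phase in the characteristic functions. Its density is $\rho_\sigma=p_\sigma*\widetilde{p_\sigma}$, where $\widetilde{p_\sigma}(x)=p_\sigma(-x)$, so the Fourier transform of $\rho_\sigma$ equals $|\widehat{p_\sigma}(t)|^2$. Recalling that $p_\sigma=d_0v_\sigma-w_\sigma$ and that $v_\sigma$ has Fourier transform $f_\sigma(t)=\exp\{-(1+\sigma^2)t^2/2+it^3/T\}$, I would first record the exact identity $v_\sigma*\widetilde{v_\sigma}=\varphi_{\sqrt{2(1+\sigma^2)}}$: indeed its Fourier transform is $f_\sigma(t)\,\overline{f_\sigma(t)}=|f_\sigma(t)|^2=e^{-(1+\sigma^2)t^2}$, since the cubic term $it^3/T$ is purely imaginary for real $t$, and this is exactly the Fourier transform of $\varphi_{\sqrt{2(1+\sigma^2)}}$. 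Expanding the convolution then gives
$$
\rho_\sigma=d_0^2\,\varphi_{\sqrt{2(1+\sigma^2)}}-d_0\big(v_\sigma*\widetilde{w_\sigma}+w_\sigma*\widetilde{v_\sigma}\big)+w_\sigma*\widetilde{w_\sigma},
$$
so $\rho_\sigma$ coincides with the Gaussian density $\varphi_{\sqrt{2(1+\sigma^2)}}$ up to terms built from the exponentially small factor $w_\sigma$ and the harmless error $|1-d_0|\le e^{-cT^2}$.

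Next I would estimate the three correction convolutions from the piecewise bounds of Lemma~\ref{lem11.1} for $v_\sigma$ together with those of Lemma~\ref{lem11.1'} for $w_\sigma$. Writing $(v_\sigma*\widetilde{w_\sigma})(x)=\int v_\sigma(x+u)\,w_\sigma(u)\,du$ and splitting the integral according to whether $u\le0$, $0<u\le T/16$, or $u>T/16$ — and according to whether the argument of $v_\sigma$ lies below or above $(1+\sigma^2)^2T/16$ — one checks that in each regime the integrand is dominated by $e^{-cT^2}$ times an integrable function, uniformly in $x$ over the relevant ranges; the companion term follows from the symmetry $(w_\sigma*\widetilde{v_\sigma})(x)=(v_\sigma*\widetilde{w_\sigma})(-x)$, and $w_\sigma*\widetilde{w_\sigma}$ is treated likewise. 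This yields $|(v_\sigma*\widetilde{w_\sigma})(x)|+|(w_\sigma*\widetilde{v_\sigma})(x)|+|(w_\sigma*\widetilde{w_\sigma})(x)|\le Ce^{-cT^2}$ for $|x|\le c'T$ and $\le Ce^{-cT|x|}$ for $|x|>c'T$. Since $\E(U_\sigma-V_\sigma)=0$ and $b^2:=\Var(U_\sigma-V_\sigma)=2b_\sigma^2$ satisfies $|b^2-2(1+\sigma^2)|\le e^{-cT^2}$, an elementary computation gives $\varphi_{\sqrt{2(1+\sigma^2)}}(x)/\varphi_{0,b}(x)=1+c\theta(1+x^2)e^{-cT^2}$. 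Combining this with the expansion of $\rho_\sigma$, I would conclude that for a sufficiently small absolute constant $c'$ and all large $T$: for $|x|\le c'T$ one has $\rho_\sigma(x)>0$ and $\rho_\sigma(x)/\varphi_{0,b}(x)=1+\delta(x)$ with $|\delta(x)|\le C(1+x^2)e^{-cT^2}$; while for $|x|>c'T$ one has $\rho_\sigma(x)\le Ce^{-cT|x|}$.

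To finish, I would split $D(U_\sigma-V_\sigma)=\big(\int_{|x|\le c'T}+\int_{|x|>c'T}\big)\rho_\sigma(x)\log\frac{\rho_\sigma(x)}{\varphi_{0,b}(x)}\,dx$. On the central part, $|\log(1+\delta(x))|\le2|\delta(x)|$, so this contribution is at most $Ce^{-cT^2}\int(1+x^2)\rho_\sigma(x)\,dx=Ce^{-cT^2}(1+b^2)\le e^{-cT^2}$, using $\int x^2\rho_\sigma\,dx=b^2\le C$ for $\sigma\le2$. On the tail, using $u|\log u|\le2\sqrt u$ for $0<u<1$, the bound $|\log\varphi_{0,b}(x)|\le C(1+x^2)$, and $\rho_\sigma(x)\le Ce^{-cT|x|}$, the contribution is bounded by $C\int_{|x|>c'T}\big(\sqrt{\rho_\sigma(x)}+(1+x^2)\rho_\sigma(x)\big)\,dx\le e^{-cT^2}$ for $T$ large. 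Adding the two estimates gives $D(U_\sigma-V_\sigma)\le e^{-cT^2}$, as claimed.

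The main obstacle is the second step: establishing the pointwise bounds on $\rho_\sigma$, that is, controlling the correction convolutions $v_\sigma*\widetilde{w_\sigma}$, $w_\sigma*\widetilde{v_\sigma}$, $w_\sigma*\widetilde{w_\sigma}$ and the tail of $\rho_\sigma$ from only the piecewise estimates of Lemmas~\ref{lem11.1}--\ref{lem11.1'}. Since $v_\sigma$ is merely Gaussian-like on the window $|x|\lesssim T^2$ and only $e^{-cT|x|}$ beyond it, each convolution must be dissected into several regimes, with care that the exponential smallness $e^{-cT^2}$ (respectively $e^{-cT|x|}$ in the tail) persists uniformly in $x$ across all of them; by contrast, the concluding integration step is routine.
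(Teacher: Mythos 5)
Your proposal is correct and follows essentially the same route as the paper: the exact identity $v_\sigma*\widetilde{v_\sigma}=\varphi_{\sqrt{2(1+\sigma^2)}}$ from the cancellation of the cubic phase, the four-term expansion of $p_\sigma*\bar p_\sigma$, the piecewise bounds $e^{-cT^2}$ (for $|x|\lesssim T$) and $e^{-cT|x|}$ (beyond) on the correction convolutions via Lemmas~\ref{lem11.1}--\ref{lem11.1'}, and the split of the entropy integral into a central and a tail part. The only cosmetic difference is that the paper isolates the variance mismatch as a separate integral against $\log\big(\varphi_{\sqrt{2(1+\sigma^2)}}/\varphi_{\sqrt{\Var(U_\sigma-V_\sigma)}}\big)$, whereas you fold it into the pointwise ratio $1+\delta(x)$; both are handled by the same bound $|\Var(U_\sigma-V_\sigma)-2(1+\sigma^2)|\le e^{-cT^2}$.
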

\begin{proof}
Putting $\bar{p}_{\sigma}(x) = p_{\sigma}(-x)$, we have
\begin{align}\label{lem11.3.1}
D(U_{\sigma}-V_{\sigma})&=
\int_{-\infty}^\infty (p_{\sigma}*\bar{p}_{\sigma})(x)
\log\frac{(p_{\sigma}*\bar{p}_{\sigma})(x)}
{\varphi_{\sqrt{2(1+\sigma^2)}}(x)}\,dx\notag\\
 & +
\int_{-\infty}^\infty (p_{\sigma}*\bar{p}_{\sigma})(x)
\log\frac{\varphi_{\sqrt{2(1+\sigma^2)}}(x)}
{\varphi_{\sqrt{\Var(X_{\sigma}-Y_{\sigma})}}(x)}\,dx.
\end{align}
Note that $\bar{p}_{\sigma}(x)=d_0\bar{v}_{\sigma}(x)-\bar{w}_{\sigma}(x)$ 
with $\bar{v}_{\sigma}(x) = v_{\sigma}(-x)$, 
$\bar{w}_{\sigma}(x) = w_{\sigma}(-x)$, and 
\begin{equation}\label{lem11.3.2'}
p_{\sigma}*\bar{p}_{\sigma} =
d_0^2(v_{\sigma}*\bar{v}_{\sigma})(x)-d_0(v_{\sigma}*\bar{w}_{\sigma})(x)
-d_0(\bar{v}_{\sigma}*w_{\sigma})(x)+(w_{\sigma}*\bar{w}_{\sigma})(x).
\end{equation}
By the very definition of $v_{\sigma}$, $v_{\sigma}*\bar{v}_{\sigma}=
\varphi_{\sqrt{2(1+\sigma^2)}}$.
Since $|\Var(U_{\sigma}-V_{\sigma})-2(1+\sigma^2)|\le e^{-cT^2}$, 
using Lemma~\ref{lem11.1},
we note that the second integral on the right-hand side of (\ref{lem11.3.1}) 
does not exceed $e^{-cT^2}$. Using Lemma~\ref{lem11.1} and
Lemma~\ref{lem11.1'}, we get
\begin{align}
&|(v_{\sigma}*\bar{w}_{\sigma})(x)|+|(\bar{v}_{\sigma}*w_{\sigma})(x)|+
|w_{\sigma}*\bar{w}_{\sigma}(x)|
\le e^{-cT^2},\quad |x|\le \tilde{c}T,\label{lem11.3.2}\\
&|(v_{\sigma}*\bar{w}_{\sigma}(x)|+|(\bar{v}_{\sigma}*w_{\sigma})(x)|+
|(w_{\sigma}*\bar{w}_{\sigma})(x)|
\le e^{-cT|x|},\quad |x|> \tilde{c}T,\label{lem11.3.3}.
\end{align}
It follows from these estimates that
\begin{equation}
\frac{(p_{\sigma}*\bar{p}_{\sigma})(x)}{\varphi_{\sqrt{2(1+\sigma^2)}(x)}}=
1+c\theta e^{-cT^2}
\end{equation}
for $|x|\le c'T$. Hence, with the help of the Lemma~\ref{lem11.1} 
and Lemma~\ref{lem11.1'}, we may conclude that
\begin{equation}
\bigg|\int_{|x| \le c'T}
(p_{\sigma}*\bar{p}_{\sigma})(x)\log\frac{(p_{\sigma}*\bar{p}_{\sigma})(x)}
{\varphi_{\sqrt{2(1+\sigma^2)}(x)}}\,dx\bigg| \le e^{-cT^2}.
\end{equation}
A similar integral over the set $|x|>c'T$ can be estimated 
with the help of (\ref{lem11.3.2}) and (\ref{lem11.3.3}), and here we 
arrive at the same bound as well. Therefore, the assertion of the lemma follows 
from (\ref{lem11.3.1}).
\end{proof}

Introduce the random variables $X=(U-a_0)/b_0$ and $Y=(V-a_0)/b_0$.
Since $D(X_{\sigma})=D(U_{b_0\sigma})$ and $D(X_{\sigma}-Y_{\sigma})=D(U_{b_0\sigma}-
V_{b_0\sigma})$,
the statement of Theorem~1.3 for the entropic distance $D$ immediately 
follows from Lemma~\ref{lem11.2} and Lemma~\ref{lem11.3}.

As for the distance $J_{st}$, we need to prove corresponding analogs 
of Lemma~\ref{lem11.2} and Lemma~\ref{lem11.3} for $J_{st}(U_{\sigma})$ 
and $J_{st}(U_{\sigma}-V_{\sigma})$, respectively.
By the Stam inequality (\ref{l4}) and Lemma~\ref{lem11.2}, we see that 
\begin{equation}\label{lem11.3.4}
J_{st}(U_{\sigma}) \ge c(\sigma)\,T^{-2}\quad
\text{for sufficiently large} \ T>1,
\end{equation}
where $c(\sigma)$ denote positive constants depending on $\sigma$ only.
We estimate the quantity $J_{st}(U_{\sigma}-V_{\sigma})$, by using the formula
\begin{align}\label{lem11.3.5}
\frac{J_{st}(U_{\sigma}-V_{\sigma})}{2\Var(U_{\sigma})} =
-\int_{-\infty}^\infty (p_{\sigma}*\bar{p}_{\sigma})''(x)
\log\frac
{(p_{\sigma}*\bar{p}_{\sigma})(x)}
{\varphi_{\sqrt{\Var(U_{\sigma}-V_{\sigma})}}(x)}\,dx.
\end{align}
It is not difficult to conclude from (\ref{lem11.3.2'}), 
using our previous arguments, that
\begin{equation}\label{lem11.3.6}
(p_{\sigma}*\bar{p}_{\sigma})''(x)=d_0^2\varphi''_{\sqrt{2(1+\sigma^2)}}(x) + 
R_{\sigma}(x),
\end{equation}
where $|R_{\sigma}(x)|\le c(\sigma)e^{-cT^2}$ for $|x|\le \tilde{c} T$ 
and $|R_{\sigma}(x)|\le c(\sigma)e^{-cT|x|}$ for $|x|>\tilde{c} T$.
Applying (\ref{lem11.3.6}) in the formula (\ref{lem11.3.5}) and 
repeating the argument that we used in the proof of Lemma~\ref{lem11.3},
we obtain the desired result, namely
\begin{equation}\label{lem11.3.7}
J_{st}(U_{\sigma}-V_{\sigma})\le 
c(\sigma)\Var(X_{\sigma})e^{-cT^2}\quad\text{for sufficiently large}\ T>1.
\end{equation}
By Theorem 1.2, $J_{st}(U_{\sigma})\le 
-c(\sigma)/(\log J_{st}(U_{\sigma}-V_{\sigma}))$, which implies
$J_{st}(U_{\sigma})\to 0$ as $T\to\infty$. 
Since $J_{st}(X_{\sigma})=J_{st}(U_{b_0\sigma})$ and 
$J_{st}(X_{\sigma}-Y_{\sigma})=J_{st}(U_{b_0\sigma}-
V_{b_0\sigma})$,
the statement of Theorem~1.3 
for $J_{st}$ follows from (\ref{lem11.3.4}) and (\ref{lem11.3.7}).

\end{document}